\documentclass{amsart}
\pdfoutput=1

\usepackage{tikz}
\usepackage{amsmath}
\usepackage{amssymb}
\usepackage{amsthm}

\title[Neutral-fermionic presentation]{Neutral-fermionic presentation of the $K$-theoretic $Q$-function}
\author{Shinsuke Iwao}
\address{Department of Mathematics, Tokai University, 4-1-1, Kitakaname, Hiratsuka, Kanagawa 259-1292, Japan.}
\email{iwao@tokai.ac.jp}
\date{\today}

\newtheorem{theorem}{Theorem}[section]
\newtheorem{proposition}[theorem]{Proposition}
\newtheorem{lemma}[theorem]{Lemma}

\newtheorem{example}[theorem]{Example}
\newtheorem{remark}[theorem]{Remark}

\newtheorem{corollary}[theorem]{Corollary}

\def\QQ{\mathord{\mathbb{Q}}}

\def\ZZ{\mathord{\mathbb{Z}}}

\def\ket#1{\vert #1 \rangle}
\def\bra#1{\langle #1 \vert}

\def\Pf{\mathop{\mathrm{Pf}}}
\def\zet#1{\left\vert{#1}\right\vert}
\newcommand{\bb}{b^{(\beta)}}
\newcommand{\dbb}{b^{[\beta]}}
\newcommand{\Hb}{\mathcal{H}^{(\beta)}}
\newcommand{\dHb}{\mathcal{H}^{[\beta]}}
\def\bphi{\phi^{(\beta)}} 
\def\dbphi{\phi^{[\beta]}}

\begin{document}

\begin{abstract}
We show a new neutral-fermionic presentation of Ikeda-Naruse's $K$-theoretic $Q$-functions, which represent a Schubert class in the $K$-theory of coherent sheaves on the Lagrangian Grassmannian.
Our presentation provides a simple description and yields a straightforward proof of two types of Pfaffian formulas for them.
We present a dual space of $G\Gamma$, the vector space generated by all $K$-theoretic $Q$-functions, by constructing a non-degenerate bilinear form which is compatible with the neutral fermionic presentation.
We give a new family of dual $K$-theoretic $Q$-functions, their neutral-fermionic presentations, and Pfaffian formulas.

\smallskip
\noindent \textbf{Keywords.} $K$-theoretic $Q$-functions, Boson-Fermion correspondence, neutral fermions

\noindent \textbf{MSC Classification.} 05E05, 13M10
\end{abstract}

\maketitle

\section{Introduction}\label{sec:intro}

Schur's $Q$-polynomials were first introduced by Schur in the paper~\cite{Schur1911} on the projective representations of the symmetric and alternating groups.
They provide many combinatorial results concerning the representation theory of the symmetric group~\cite{STEMBRIDGE198987} and symmetric polynomials~\cite{macdonald1998symmetric}.
They are also essential for Schubert calculus, in which Schur's $Q$-polynomial represents a Schubert class in the cohomology ring of the complex Lagrangian Grassmannian~\cite{jozefiak1991schur,Pragacz1991sqpoly}.

In 2013, Ikeda-Naruse~\cite{IKEDA201322} introduced a $K$-theoretic analog of Schur's $Q$-function, which represents a Schubert class in the $K$-theory of coherent sheaves on the Lagrangian Grassmannian.
(They more generally introduced the $K$-theoretic \textit{factorial $P$- and $Q$-functions} for generalized flags, which represent a Schubert class in the $T$-equivariant $K$-theory~\cite{kostant1990t}.)
There are many known combinatorial and algebraic properties of $K$-theoretic $Q$-functions that generalize these of original $Q$-functions;
there exists a Hall-Littlewood type formula for them~\cite[\S 2.1]{IKEDA201322};
they are expressed as a ratio of Pfaffians (Nimmo-type formula)~\cite[\S 2.3]{IKEDA201322};
they have a combinatorial expression in terms of \textit{excited Young tableaux}~\cite{graham2015excited,ikeda2011bumping}.
(More development and generalizations are found in  ~\cite{hudson2016pfaffian,HUDSON2017115,ikeda2011bumping,kirillov2017construction,nakagawa2017generating}.)

This work aims to present a new algebraic characterization of the $K$-theoretic $Q$-functions in terms of the \textit{boson-fermion correspondence}.
In the previous papers \cite{iwao2020freefermions,iwao2020freefermion}, the author of this paper gave a free-fermionic presentation of \textit{skew stable Grothendieck polynomials} (and their dual polynomials), which are $K$-theoretic versions of Schur functions.
On the analogy of these results, we give a new presentation of $K$-theoretic $Q$-functions by using \textit{``$\beta$-deformed'' neutral fermions}. 
This generalizes the works by Date-Jimbo-Miwa~\cite{date1983method,jimbo1983solitons}, in which they realize Schur's $Q$-functions as a vacuum expectation value of neutral-fermionic operators.

We also discuss duality.
The vector space generated by $K$-theoretic $Q$-functions is characterized by some algebraic condition, which is called the \textit{$K$-theoretic $Q$-cancellation property}~\cite{IKEDA201322}.
Let $G\Gamma$ denote this vector space.
It is not trivial to find a suitable $\beta$-deformed polynomial space that contains some ``good'' dual basis of $K$-theoretic $Q$-functions.
By using our fermionic presentation, however, we can define a new vector space $g\Gamma$ that possesses desirable properties; there exists a non-degenerate bilinear form $G\Gamma\otimes g\Gamma\to \QQ(\beta)$ compatible with neural-fermionic presentations; $g\Gamma$ is characterized by an algebraic condition, which we will call the \textit{dual $K$-theoretic $Q$-cancellation property} (Section \ref{sec:dual1}).


In this paper, we start with a result of Hudson-Ikeda-Matsumura-Naruse~\cite{HUDSON2017115}, which is described as follows.
Let $\beta$ be a parameter.
We consider the binary operators $\oplus$ and $\ominus$ defined by
\[
x\oplus y=x+y+\beta xy,\quad x\ominus y=\frac{x-y}{1+\beta y}.
\]
Set $[[x]]^a=(x\oplus x)x^{a-1}=(2+\beta x)x^a$ for a positive integer $a$, and $[[x]]^0=1$.

Let $\lambda=(\lambda_1>\dots>\lambda_{r}>0)$ be a strict partition of length $r\leq n$. The \textit{$K$-theoretic $Q$-polynomial} $GQ_\lambda(x_1,\dots,x_n)$ is a symmetric polynomial that is expressed as~\cite{IKEDA201322}:
\begin{align*}\label{eq:first_definition}
&GQ_\lambda(x_1,\dots,x_n)\\
&=\frac{1}{(n-r)!}\sum_{w\in S_n}
w\left[
[[x_1]]^{\lambda_1}[[x_2]]^{\lambda_2}\cdots [[x_r]]^{\lambda_r}
\prod_{i=1}^r\prod_{j=i+1}^n\frac{x_i\oplus x_j}{x_i\ominus x_j}
\right],
\end{align*}
where $w\in S_n$ acts on the set of the variables $x_1,\dots,x_n$ as a permutation.
Notice that $GQ_\lambda(x_1,\dots,x_n)$ reduces to Schur's $Q$-polynomial~\cite[\S III]{macdonald1998symmetric} $Q_\lambda(x_1,\dots,x_n)$ when $\beta=0$.

By taking the limit as $n\to \infty$, we obtain the $K$-theoretic $Q$-function $GQ_\lambda(x)=GQ_\lambda(x_1,x_2,\dots)$ in countably many variables~\cite[\S 3]{IKEDA201322}.
In 2017, Hudson-Ikeda-Matsumura-Naruse~\cite{HUDSON2017115} presented the following Pfaffian formula:
\begin{theorem}[{\cite[Theorem 5.21]{HUDSON2017115}}. See also \cite{nakagawa2017generating}]\label{def-prop:pf_GQ}
Let $\lambda$ be a strict partition of $r$, and $r'$ be the minimum even integer that is equal to or greater than $r$.
For a one-row partition $(n)$, we use the abbreviation $GQ_n(x)=GQ_{(n)}(x)$. 
Let $\mathrm{Pf}(c_{i,j})_{1\leq i,j\leq r}$ denote the Pfaffian of an array $(c_{i,j})_{1\leq i,j\leq r}$ $($see \S \ref{sec:prelim}$)$.
Then the $K$-theoretic $Q$-polynomial $GQ_\lambda(x)$ is expressed as:
\[
GQ_\lambda(x)=\Pf\left(
\tau_{i,j}
\right)_{1\leq i<j\leq r'},
\]
where
\[
\tau_{i,j}=
\begin{cases}
\sum_{p\geq 0,\, p+q\geq 0}
f^{i,j}_{p,q}\cdot

GQ_{\lambda_i+p}(x)GQ_{\lambda_j+q}(x), & j\neq r+1,\\
\sum_{p\geq 0}
f^{i,r+1}_{p}
GQ_{\lambda_i+p}(x), & j= r+1.
\end{cases}
\]
Here the coefficients $f_{p,q}^{i,j}\in \QQ[\beta]$ are defined by\footnote{In \cite[\S 5.5]{HUDSON2017115}, the coefficient $f^{i,j}_{p,q}\in \QQ[\beta]$ ($j<r+1$) is given by the formal expansion
\[
\frac{1}{(1+\beta t_i)^{r'-i-1}}
\frac{1}{(1+\beta t_j)^{r'-j}}
\frac{1-\overline{t_i}/\overline{t_j}}{1-t_i/\overline{t_j}}
=
\sum_{p\geq 0,\ p+q\geq 0}f_{p,q}^{i,j} t_i^{p}t_j^{q},\qquad \left(\overline{t}=0\ominus t=\frac{-t}{1+\beta t}\right),
\]
which is equivalent to our description.
See also~\cite[Lemma 5.19]{HUDSON2017115}.
}
\[
\begin{cases}
\displaystyle
\frac{1}{(1+\beta t_i)^{r'-i}}
\frac{1}{(1+\beta t_j)^{r'-j}}
\frac{t_j-t_i}{t_i+t_j+\beta t_it_j}
=
\sum_{p\geq 0,\ p+q\geq 0}f_{p,q}^{i,j} t_i^{p}t_j^{q}, & j\neq r+1,\\
\displaystyle
\frac{1}{(1+\beta t_i)^{r'-i-1}}
=
\sum_{p\geq 0}f_{p}^{i,r+1} t_i^{p}, & j= r+1.
\end{cases}
\]
These formal expansions are interpreted as Laurent series on  $\{\zet{t_{r'}}>\dots>\zet{t_2}>\zet{t_1}\}$.
\end{theorem}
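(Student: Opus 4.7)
The plan is to derive this Pfaffian formula directly from the $\beta$-deformed neutral-fermionic presentation that the paper constructs. I work on a Fock space with vacuum $|0\rangle$, operators $\phi^{(\beta)}(z)=\sum_{n\in\ZZ}\phi^{(\beta)}_n z^n$, and a bosonic Hamiltonian $H(x)$, and assume as given the one-row identity
\[
GQ_n(x)\;=\;\langle 0|\,\phi^{(\beta)}_n\,e^{H(x)}|0\rangle \qquad (n\geq 0)
\]
together with its multi-row upgrade
\[
GQ_\lambda(x)\;=\;\langle 0|\,\phi^{(\beta)}_{\lambda_1}\phi^{(\beta)}_{\lambda_2}\cdots\phi^{(\beta)}_{\lambda_r}\,e^{H(x)}|0\rangle .
\]
When $r$ is odd I extend the product on the right by a single auxiliary $\phi^{(\beta)}_0$, using the neutral-fermion relation $(\phi^{(\beta)}_0)^2\propto\mathrm{id}$, so that the effective fermion count becomes $r'$ and the extra factor contributes the boundary column indexed by $r+1$ in the Pfaffian.

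The heart of the argument is a Wick theorem for $\beta$-deformed neutral fermions, asserting
\[
\langle 0|\,\phi^{(\beta)}_{\mu_1}\cdots\phi^{(\beta)}_{\mu_{r'}}\,e^{H(x)}|0\rangle\;=\;\Pf\!\left(\langle 0|\,\phi^{(\beta)}_{\mu_i}\phi^{(\beta)}_{\mu_j}\,e^{H(x)}|0\rangle\right)_{1\leq i<j\leq r'}.
\]
I prove it by first establishing a pairwise anticommutation rule of the schematic form $\phi^{(\beta)}(t)\phi^{(\beta)}(u)+\phi^{(\beta)}(u)\phi^{(\beta)}(t)=K^{(\beta)}(t,u)$ with a scalar kernel $K^{(\beta)}$, then using it to expand the left-hand side as a signed sum over perfect matchings of $\{1,\dots,r'\}$, which by definition is a Pfaffian. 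It then remains to identify each two-point correlator with $\tau_{i,j}$: the deformed anticommutator rewrites $\phi^{(\beta)}(t_i)\phi^{(\beta)}(t_j)$ acting on $e^{H(x)}|0\rangle$ as a scalar factor times a product of two single-fermion actions; the scalar factor is precisely the rational function whose expansion defines $f^{i,j}_{p,q}$, while the single-fermion actions produce $GQ_{\lambda_i+p}(x)$ and $GQ_{\lambda_j+q}(x)$ via the one-row identity. The boundary case $j=r+1$ reduces to a one-point correlator which recovers $f^{i,r+1}_p\,GQ_{\lambda_i+p}(x)$.

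The step that demands the most care is the precise matching of the asymmetric exponents $(1+\beta t_i)^{-(r'-i)}(1+\beta t_j)^{-(r'-j)}$ appearing in the theorem: these depend on the \emph{positional} indices $i,j$ rather than on $\lambda_i,\lambda_j$, so they cannot come from the two-point kernel of a single isolated pair. They must arise from the price paid when each $\phi^{(\beta)}_{\lambda_i}$ is propagated past its right neighbours in the fermionic product, with each of the $r'-i-1$ right neighbours contributing a uniform factor. Making this bookkeeping exact, in a way that is compatible with the Pfaffian sign convention and with the parity-dependent padding, is the main technical obstacle; it pins down the normalization of the $\beta$-deformed neutral fermions and, once in hand, yields the theorem after reading off the coefficient of $t_i^p t_j^q$.
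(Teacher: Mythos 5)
This theorem is not proved in the paper at all: it is quoted from Hudson--Ikeda--Matsumura--Naruse and serves as the paper's \emph{definition} of $GQ_\lambda(x)$. The paper's Section \ref{sec:GQ_lambda} runs the argument in the opposite direction: it computes the fermionic expression, obtains its Pfaffian expansion via Wick's theorem (Lemma \ref{lemma:Phi_Pf}), and then invokes the quoted Pfaffian formula to conclude that the fermionic expression equals $GQ_\lambda(x)$. Your proposal inverts this, taking the multi-row fermionic presentation as given and deducing the Pfaffian formula. Within the logic of this paper that is circular, since the multi-row presentation (Theorem \ref{thm:main1}) is established only by appeal to the very formula you are trying to prove; only the one-row identity is derived independently (from the generating function \eqref{eq:GQ_n}). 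To make your route non-circular you would need an independent derivation of the multi-row presentation from, say, the Hall--Littlewood-type definition of $GQ_\lambda$, and you do not supply one.

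There is also a concrete technical gap, which you yourself flag but do not resolve. The fermionic presentation you posit, $\langle 0|\phi^{(\beta)}_{\lambda_1}\cdots\phi^{(\beta)}_{\lambda_r}e^{H(x)}|0\rangle$, omits the interleaved operators $e^{\Theta}$ that appear in Theorem \ref{thm:main1}. These are not decoration: commuting $\phi^{(\beta)}(z_i)$ past the $r'-i+1$ copies of $e^{\Theta}$ standing to its right produces exactly the factor $(1+\beta z_i^{-1})^{-(r'-i+1)}$ (Proposition \ref{prop:eTheta_action_on_phi(beta)}), which after the $e^{\Hb}$-conjugation (Remark \ref{rem:important_remark}) becomes the position-dependent exponent $-(r'-i)$ in $f^{i,j}_{p,q}$. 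Your proposed substitute mechanism --- that each of the $r'-i-1$ fermions to the right contributes ``a uniform factor'' when $\phi^{(\beta)}_{\lambda_i}$ is propagated past it --- cannot work: the $\phi^{(\beta)}_n$ are linear combinations of the $\phi_m$, so their anticommutators are scalars, and moving one fermion past another contributes only a sign and an additive contraction term, never a multiplicative dressing factor of the form $(1+\beta t)^{-1}$. Without the $e^{\Theta}$'s every two-point correlator carries the same exponents regardless of position, and the resulting Pfaffian does not match the statement. The Wick-theorem step itself is sound (the $A_i$ of the paper are scalar multiples of $\phi^{(\beta)}(z_i)$, hence linear in the original fermions), but the entries it produces under your ansatz are the wrong ones.
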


Our project is to reinterpret the Pfaffian formula in Theorem \ref{def-prop:pf_GQ} from the perspective of boson-fermion correspondence.
This formula seems a bit complicated, but the use of fermions makes it simpler.
We would emphasize the fact that the fermionic presentation of $GQ_\lambda(x)$ in the main theorem (Theorem \ref{thm:main1}) looks quite similar to that of the \textit{stable Grothendieck polynomials} in~\cite{iwao2020freefermions,iwao2020freefermion}.
We hope that our result will reveal deeper common structures among various $K$-theoretic symmetric functions.

Here we refer to other works related to this topic.
In the context of geometry, there are various known generalizations of $GQ_\lambda(x)$.
In~\cite{IKEDA201322}, Ikeda-Naruse introduced the $K$-theoretic \textit{factorial} $Q$-functions, representatives of \textit{equivariant} Schubert classes in the $K$-theory, which come back to $GQ_\lambda(x)$ at the non-equivariant limit.
Nakagawa-Naruse~\cite{nakagawa2017generating,nakagawa2018universal} presented the \textit{universal Hall-Littlewood factorial $P$- and $Q$-functions}, which generalize Ikeda-Naruse's result to the \textit{universal cohomology theory} of general flag bundles.
Their works are based on \textit{Gysin maps} for flag bundles in various settings, and there are numerous previous researches on this topic.
Basic ideas can be found in \cite{fulton2006schubert,pragacz1988enumerative}.
For throughout review, we refer to the introduction of \cite{nakagawa2018universal}.


Integrable models also have significant connections with $K$-theoretic symmetric functions.
In the papers~\cite{motegi2013vertex,motegi2014k}, Motegi-Sakai showed a free-fermionic realization of Grothendieck polynomials by constructing wave functions of quantum mechanics such as TASEP (Totally Asymmetric Simple Exclusion Process) and melting crystal.
Interestingly many algebraic formulas concerning Grothendieck polynomials have been derived from arguments on quantum mechanics (see \cite{motegi2020integrability}, for example).
Gorbounov-Korff~\cite{gorbounov2017quantum} have generalized Motegi-Sakai's result for the factorial case.

This paper is composed of two parts:
The first part consists of Sections \ref{sec:neutral_fermions}--\ref{sec:GQ_lambda} and presents a neutral-fermionic presentation of $GQ_\lambda(x)$.
Their Pfaffian formulas are also shown here.
Section \ref{sec:neutral_fermions} contains a definition of the $\beta$-deformed neutral fermion $\bphi_n$.
In Section \ref{sec:vacuum_ex}, we present how to calculate the vacuum expectation values of $\beta$-deformed operators. 
Section \ref{sec:Heisenberg} introduces new operators $\bb$, $e^{\Hb}$, and $e^\Theta$, which we will use in the main theorem.
We also show their commutation relations.
Section \ref{sec:review_Q} contains a review of the vector space $G\Gamma$ of $K$-theoretic $Q$-cancellation property \cite{IKEDA201322} spanned by all $GQ_\lambda(x)$.
Sections \ref{sec:GQ_n}--\ref{sec:GQ_lambda} present the main theorem of this paper.
We show the neutral-fermionic presentation of $GQ_n(x)$ ($n\in \ZZ$) in Section \ref{sec:GQ_n} and that of $GQ_\lambda(x)$ in Section  \ref{sec:GQ_lambda}.

The second part consists of Sections \ref{sec:dual1}--\ref{sec:Pfaffian_formula_dual} and discusses duality.
In Section \ref{sec:dual1}, we introduce the vector space $g\Gamma$ of \textit{dual $K$-theoretic $Q$-cancellation property}.
Every element of $g\Gamma$ is expressed as a vacuum expectation value by using the \textit{dual $\beta$-deformed neutral fermions} $\dbphi_n$, which we introduce in Section \ref{sec:dualbeta}.
We give a neutral-fermionic presentation of the dual $K$-theoretic $Q$-function $gp_\lambda(x)$.
Section \ref{sec:Pfaffian_formula_dual} shows a Pfaffian formula for $o_\lambda(x)$, a basis of $g\Gamma$ used to obtain a simple expression of $gp_\lambda(x)$.

\section{$\beta$-deformed neutral fermions}\label{sec:neutral_fermions}

\subsection{Review of (original) neutral fermions, Fock spaces, and Wick's theorem}\label{sec:prelim}

We briefly review some basic facts about neutral fermions.
For the readers who are interested in this theme, we recommend Baker's paper~\cite{baker1995symmetric} and Jimbo-Miwa's paper~\cite[\S 6]{jimbo1983solitons}.

We write $[A,B]=AB-BA$ and $[A,B]_+=AB+BA$ throughout the paper.
Let $k$ be a field of characteristic $0$.
(We will put $k=\QQ(\beta)$ in the following sections.)
Let $\mathcal{A}$ denote the ring of neutral fermions $\phi_n$ ($n\in \ZZ$) with the anti-commutation relation
\begin{equation}\label{eq:duality_original}
[\phi_m,\phi_n]_+=2(-1)^m\delta_{m+n,0}.
\end{equation}
Notice that $\phi_0^2=1$ and $\phi_n^2=0$ for $n\neq 0$.

Let $\ket{0}$ and $\bra{0}$ be the \textit{vacuum vectors}:
\[
\phi_{-n}\ket{0}=0,\qquad \bra{0}\phi_n =0,\qquad \mbox{for } n>0.
\]
The \textit{Fock space $\mathcal{F}$} is the $k$-vector space generated by the vectors
\begin{equation}\label{eq:F-vectors}
\phi_{n_1}\phi_{n_2}\cdots \phi_{n_r}\ket{0},\qquad 
(r=0,1,2,\dots,\ n_1>n_2>\dots>n_r\geq 0).
\end{equation}
It is known (see \cite[\S 2]{baker1995symmetric} for example) that the vectors \eqref{eq:F-vectors} are linearly independent.
By using the anti-commutation relation \eqref{eq:duality_original} repeatedly, we can define the left action of $\mathcal{A}$ on $\mathcal{F}$.
Thus $\mathcal{F}$ is a left $\mathcal{A}$-module.

We also consider the $k$-space $\mathcal{E}$ generated by the vectors
\begin{equation}\label{eq:E-vectors}
\bra{0}\phi_{m_1}\phi_{m_2}\cdots \phi_{m_r},\qquad 
(r\geq 0,\ 0\geq m_1>m_2>\dots>m_r).
\end{equation}
We can show that the vectors \eqref{eq:E-vectors} are linearly independent and $\mathcal{E}$ is a right $\mathcal{A}$-module.

There exists a $k$-bilinear map
\[
\mathcal{E}\otimes_k\mathcal{F}\to k,\quad 
\bra{w}\otimes \ket{v}\mapsto \langle{w}\vert v\rangle,
\]
called the \textit{vacuum expectation value} \cite[\S 1 and \S 6]{jimbo1983solitons}, which satisfies  (i) $\langle 0\vert 0\rangle=1$, (ii) $(\bra{w}X) \ket{v}=\bra{w} (X\ket{v})$ for any $X\in \mathcal{A}$, and (iii) $\bra{0}\phi_0\ket{0}=0$.
For any $X\in \mathcal{A}$, we write 
$\bra{w}X\ket{v}:=(\langle{w}\vert X)\ket{v}
=\bra{w}(\vert X\ket{v})
$ simply.

For any odd integer $m$, let 
\[
b_m=\frac{1}{4}\sum_{i\in \ZZ} (-1)^i\phi_{-i-m}\phi_{i}
\]
be the \textit{Heisenberg generator}, which defines the linear maps
$
\ket{v}\mapsto b_m\ket{v}
$
and
$
\bra{w}\mapsto \bra{w}b_m
$,
where $\ket{v}\in \mathcal{F}$ and $\bra{w}\in \mathcal{E}$.
We can show that the following commutation relations hold~\cite[\S 6]{jimbo1983solitons}:
\begin{equation}\label{eq:b_and_phi}
[b_m,b_n]=\frac{m}{2}\delta_{m+n,0},\qquad
[b_m,\phi_n]=\phi_{n-m}.
\end{equation}

The Fock space $\mathcal{F}$ admits the graded structure $\mathcal{F}_0
\subset \mathcal{F}_1
\subset \mathcal{F}_2
\subset \mathcal{F}_3\subset\cdots=\mathcal{F}$,
where the subspace $\mathcal{F}_s$ is the $k$-span of the vectors $\phi_{n_1}\phi_{n_2}\cdots \phi_{n_r}\ket{0}$ with
\[
r\geq 0,\quad n_1>n_2>\dots >n_r\geq 0,\quad n_1+n_2+\cdots+n_r=s.
\]
Obviously $\mathcal{F}_s$ is finite dimensional.
(For example, $\mathcal{F}_0=k\cdot \ket{0}\oplus k\cdot \phi_0\ket{0}$.)
We write $\mathcal{F}_s=\{0\}$ when $s$ is negative.
Similarly, $\mathcal{E}$ has the gradation 
$
\mathcal{E}_0\subset \mathcal{E}_{-1}\subset \mathcal{E}_{-2}\subset \cdots=\mathcal{E}$,
where $\mathcal{E}_s$ is the $k$-span of the vectors $\bra{0}\phi_{m_1}\dots \phi_{m_r}$ with
\[
r\geq 0,\quad 0\geq m_1>m_2>\dots>m_r,\quad m_1+m_2+\dots+m_r=s.
\]
From \eqref{eq:duality_original}, we have
\begin{equation}\label{eq:graded_action}
\begin{gathered}
\ket{v}\in \mathcal{F}_s\ \Rightarrow\  
\begin{cases}
\phi_{m}\ket{v}\in \mathcal{F}_{s+m},\\
b_{m}\ket{v}\in \mathcal{F}_{s-m},
\end{cases}\qquad
\bra{w}\in \mathcal{E}_s\ \Rightarrow\  
\begin{cases}
\bra{w}\phi_{m}\in \mathcal{E}_{s+m},\\
\bra{w}b_{m}\in \mathcal{E}_{s-m}.
\end{cases}
\end{gathered}
\end{equation}

The \textit{Pfaffian} of an array $A=(a_{i,j})_{1\leq i<j\leq 2r}$ is defined by the equation
\[
\Pf(A):=\sum_{\sigma}\mathrm{sgn}(\sigma)a_{\sigma(1),\sigma(2)}a_{\sigma(3),\sigma(4)}\cdots a_{\sigma(2r-1),\sigma(2r)},
\]
where $\sigma$ runs on the set of all elements of the symmetric group $S_{2r}$ with
\begin{gather*}
\sigma(1)<\sigma(3)<\dots<\sigma(2r-1),\\ 
\sigma(1)<\sigma(2),\quad \sigma(3)<\sigma(4),\quad \dots,\quad 
\sigma(2r-1)<\sigma(2r).
\end{gather*}

\begin{theorem}[Wick's theorem]\label{thm:Wick}
For any set of integers $\{n_1,\dots,n_{2r}\}$, we have
\[
\bra{0} \phi_{n_1}\cdots\phi_{n_{2r}}\ket{0}=\Pf(\bra{0} \phi_{n_i}\phi_{n_j} \ket{0})_{1\leq i,j\leq 2r}.
\]
\end{theorem}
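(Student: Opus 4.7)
The plan is to argue by induction on $r$, with base case $r=1$ being the tautology that the Pfaffian of a $2\times 2$ antisymmetric array equals its single strictly upper entry, here $\bra{0}\phi_{n_1}\phi_{n_2}\ket{0}$. For the inductive step ($r\geq 2$) I would move $\phi_{n_1}$ step by step to the right through $\phi_{n_2},\ldots,\phi_{n_{2r}}$ using the anti-commutation relation \eqref{eq:duality_original}, replacing $\phi_{n_1}\phi_{n_j}$ each time by $-\phi_{n_j}\phi_{n_1}+[\phi_{n_1},\phi_{n_j}]_+$, where the anticommutator is a scalar. Telescoping gives the operator identity
\[
\phi_{n_1}\phi_{n_2}\cdots\phi_{n_{2r}}=\sum_{j=2}^{2r}(-1)^{j}[\phi_{n_1},\phi_{n_j}]_+\,\phi_{n_2}\cdots\widehat{\phi_{n_j}}\cdots\phi_{n_{2r}}-\phi_{n_2}\cdots\phi_{n_{2r}}\phi_{n_1}.
\]

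Sandwiching by $\bra{0}$ and $\ket{0}$, the residual term vanishes whenever $\phi_{n_1}\ket{0}=0$, i.e.\ whenever $n_1<0$; in that regime a direct check also gives $[\phi_{n_1},\phi_{n_j}]_+=\bra{0}\phi_{n_1}\phi_{n_j}\ket{0}$, since the complementary piece $\bra{0}\phi_{n_j}\phi_{n_1}\ket{0}$ is killed by $\phi_{n_1}\ket{0}=0$. What remains is
\[
\bra{0}\phi_{n_1}\cdots\phi_{n_{2r}}\ket{0}=\sum_{j=2}^{2r}(-1)^{j}\bra{0}\phi_{n_1}\phi_{n_j}\ket{0}\cdot\bra{0}\phi_{n_2}\cdots\widehat{\phi_{n_j}}\cdots\phi_{n_{2r}}\ket{0},
\]
which is exactly the Laplace expansion of $\Pf(\bra{0}\phi_{n_i}\phi_{n_j}\ket{0})_{1\leq i,j\leq 2r}$ along its first row, so the induction hypothesis closes the case $n_1<0$. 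To reduce the case $n_1\geq 0$ to this one, I would use the antisymmetry of both sides under adjacent transpositions: on the left the operators anti-commute up to the scalar $[\phi_{n_i},\phi_{n_{i+1}}]_+$, and on the right the Pfaffian flips sign under a simultaneous row-column swap, and the scalar corrections match terms already appearing on both sides of the expansion. Hence whenever some $n_i<0$ we may permute it into position $1$ and invoke the previous paragraph.

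The remaining configuration, where every $n_i\geq 0$, must be treated directly. Each two-point function $\bra{0}\phi_{n_i}\phi_{n_j}\ket{0}$ is nonzero only when $n_i=-n_j$ with $n_i\leq 0\leq n_j$, which forces $n_i=n_j=0$; so the Pfaffian collapses, and on the left one applies $\phi_0^2=1$ to contract paired zero modes (shortening the string and invoking induction) or reads off from the basis \eqref{eq:F-vectors} that $\phi_{n_1}\cdots\phi_{n_{2r}}\ket{0}$ has no $\ket{0}$-component. The main obstacle is precisely this zero-mode bookkeeping: $\phi_0$ is neither an annihilation nor a creation operator on $\ket{0}$, and the axiom $\bra{0}\phi_0\ket{0}=0$ must be invoked by hand exactly where the telescoping argument has no traction. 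Once this sector is handled, the telescoping identity and its Laplace-expansion interpretation drive the rest of the proof uniformly.
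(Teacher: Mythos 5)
The paper states Wick's theorem without proof, treating it as a classical fact from the cited references, so there is no in-text argument to compare against; your telescoping-plus-induction proof is the standard derivation and is essentially correct. Two places deserve to be written out rather than gestured at. First, in reducing the case $n_1\geq 0$ to $n_1<0$: the array $(\bra{0}\phi_{n_i}\phi_{n_j}\ket{0})$ is not antisymmetric on the nose (the sum of the $(i,j)$ and $(j,i)$ entries is the scalar $[\phi_{n_i},\phi_{n_j}]_+$, which need not vanish), so under an adjacent swap \emph{both} sides transform as $X\mapsto -X+[\phi_{n_i},\phi_{n_{i+1}}]_+\,Y$, where $Y$ is the corresponding quantity of length $2r-2$ with positions $i,i+1$ deleted; you need the induction hypothesis at length $2r-2$ to identify the two $Y$'s, after which the difference of the two sides is genuinely antisymmetric and the permutation reduction goes through. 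Second, ``the Pfaffian collapses'' in the all-nonnegative sector must not be read as ``vanishes'': when every $n_i=0$ the array has all entries equal to $1$ above the diagonal, whose Pfaffian is $1$, matching $\bra{0}\phi_0^{2r}\ket{0}=1$; when at least one $n_i>0$ every perfect matching picks up a zero entry so the Pfaffian is $0$, and the left side is $0$ as well because the $\phi_0$'s strictly anticommute with each $\phi_n$, $n>0$, so after contracting $\phi_0^2=1$ one is left with a word beginning with some $\phi_m$, $m>0$, killed by $\bra{0}\phi_m=0$ (or with a single $\phi_0$, killed by the axiom $\bra{0}\phi_0\ket{0}=0$). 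With these two points made explicit your argument is a complete proof.
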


\subsection{Definition of $\beta$-deformed neutral fermions}\label{sec:phi(beta)}

In the sequel, we will put $k=\QQ(\beta)$ without otherwise stated.
Let $\phi^{(\beta)}_n$ be the \textit{$\beta$-deformed neutral fermion} defined by
\begin{equation}\label{eq:beta_generating_function}
\sum_{n\geq 0}\phi_n^{(\beta)}z^{n}=
\sum_{n\geq 0}\phi_n\left(z+\frac{\beta}{2}\right)^n,\qquad
\sum_{n> 0}\phi^{(\beta)}_{-n}z^{-n}=
\sum_{n> 0}\phi_{-n}
\left(
\frac{z^{-1}}{1+\frac{\beta}{2}z^{-1}}
\right)^n.
\end{equation}
For example, they are written as
\begin{gather*}
\phi^{(\beta)}_{-1}=\phi_{-1},\quad
\phi^{(\beta)}_{-2}=\phi_{-2}-\frac{\beta}{2}\phi_{-1},\quad
\phi^{(\beta)}_{-3}=\phi_{-3}-\beta\phi_{-2}+\frac{\beta^2}{4}\phi_{-1},\\
\phi^{(\beta)}_0=\phi_0+\frac{\beta}{2}\phi_{1}+\frac{\beta^2}{4}\phi_{2}+\cdots,\quad
\phi^{(\beta)}_{1}=\phi_{1}+\beta\phi_{2}+\frac{3\beta^2}{4}\phi_{3}+\cdots.
\end{gather*}

As we can see, $\phi^{(\beta)}_n$ might be an infinite sum, but it defines a $k$-linear map $\mathcal{E}\ni \bra{w}\mapsto \bra{w}\bphi_n \in\mathcal{E}$ unambiguously.
We should note, however, that $\ket{v}\in \mathcal{F}$ does \textit{not} imply $\bphi_n\ket{v}\in \mathcal{F}$ when $n\geq 0$.

The following lemma is an easy consequence of \eqref{eq:graded_action}:
\begin{lemma}\label{lemma:gradeing_of_bphi_on_E}
Let $s,n\in \ZZ$ and $\bra{w}\in \mathcal{E}_s$.
Then we have $\bra{w}\bphi_n\in \mathcal{E}_{s+n}$.
\end{lemma}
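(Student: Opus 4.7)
The plan is to expand $\bphi_n$ via the generating functions in \eqref{eq:beta_generating_function}, apply \eqref{eq:graded_action} term by term, and invoke the filtration property of $\mathcal{E}_\bullet$ to place $\bra{w}\bphi_n$ in $\mathcal{E}_{s+n}$. The argument splits naturally into two cases according to the sign of $n$.

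In the case $n<0$, inverting the formula for negative indices in \eqref{eq:beta_generating_function} yields a \emph{finite} expansion of the form $\bphi_n=\sum_{j=1}^{|n|}c_j\,\phi_{-j}$ with $c_j\in\QQ[\beta]$. By \eqref{eq:graded_action} each $\bra{w}\phi_{-j}$ lies in $\mathcal{E}_{s-j}$; since $j\leq |n|=-n$ gives $s-j\geq s+n$, the filtration $\mathcal{E}_0\subset\mathcal{E}_{-1}\subset\mathcal{E}_{-2}\subset\cdots$ places $\mathcal{E}_{s-j}$ inside $\mathcal{E}_{s+n}$, and $\bra{w}\bphi_n\in\mathcal{E}_{s+n}$ follows at once.

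In the case $n\geq 0$, inversion of the formula for non-negative indices gives the a priori infinite expansion $\bphi_n=\sum_{m\geq n}\binom{m}{n}(\beta/2)^{m-n}\phi_m$. The key preliminary observation is that $\bra{w}\phi_m=0$ for all but finitely many $m$: writing $\bra{w}$ as a finite combination of basis vectors $\bra{0}\phi_{m_1}\cdots\phi_{m_r}$ and pushing $\phi_m$ leftward via \eqref{eq:duality_original} together with the annihilation $\bra{0}\phi_m=0$ for $m>0$, only those $m$ with $-m\in\{m_1,\ldots,m_r\}$ can produce a nonzero remainder. Hence $\bra{w}\bphi_n$ is really a finite sum, each surviving $\bra{w}\phi_m$ lies in $\mathcal{E}_{s+m}$ with $m\geq n$, and the same filtration argument as before places the whole combination in $\mathcal{E}_{s+n}$.

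The only substantive point beyond a mechanical comparison of filtration indices is this finiteness observation in the $n\geq 0$ case, and the paragraph immediately preceding the lemma has effectively recorded it by asserting that $\bra{w}\mapsto\bra{w}\bphi_n$ is an unambiguously defined linear map on $\mathcal{E}$. That is precisely what makes the statement, as the author says, an easy consequence of \eqref{eq:graded_action}.
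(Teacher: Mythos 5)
Your proof is correct and is exactly the intended argument: the paper offers no proof beyond calling the lemma an easy consequence of \eqref{eq:graded_action}, and your expansion of $\bphi_n$ into $\phi_m$'s (finite for $n<0$, with the finiteness of surviving terms for $n\geq 0$ secured by the annihilation of $\bra{w}\phi_m$ for large $m$) combined with the filtration $\mathcal{E}_a\subset\mathcal{E}_b$ for $a\geq b$ is the natural fleshing-out of that remark. No gaps.
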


\section{Vacuum expectation values}\label{sec:vacuum_ex}

\subsection{Notes on formal power series}

Let $V$ be a $k$-vector space.
Throughout the paper, we will use the notations $V[[z]]$ and $V((z))$ defined as
\begin{align*}
&V[[z]]=\{v_{0}+v_{1}z^{1}+v_2z^2+\cdots\,|\, v_i\in V  \},\\
&V((z))=\{v_{-N}z^{-N}+v_{-N+1}z^{-N+1}+\cdots\,|\,N\geq 0,\ v_i\in V  \}.
\end{align*}
If $V$ is an $R$-algebra with $R$ a ring, $V[[z]]$ and $V((z))$ are also $R$-algebras.

We will write 
\[
V((z_1))((z_2)):=\{V((z_1))\}((z_2)),\quad
V((z_1))[[z_2]]:=\{V((z_1))\}[[z_2]],\quad etc.
\]
Obviously, we have 
$
V[[z]][[w]]=V[[w]][[z]]
$.
However, neither ``$V((z))((w))=V((w))((z))$'' nor ``$V((z))[[w]]=V[[w]]((z))$'' are true in general.
In fact, $1+z^{-1}w+z^{-2}w^2+z^{-3}w^3+\cdots$ is contained in $\QQ((z))((w))$ but is not in $\QQ((w))((z))$.

We often identify rational functions with their Laurent expansion (over some appropriate region) by convention.
For example ``$f(z,w)=\frac{1}{z-w}$ in $\QQ((z))((w))$'' means 
\begin{equation}\label{eq:f_expand}
f(z,w)=z^{-1}+z^{-2}w+z^{-3}w^2+\cdots,
\end{equation}
while ``$g(z,w)=\frac{1}{z-w}$ in $\QQ((w))((z))$'' means 
\begin{equation}\label{eq:g_expand}
g(z,w)=-w^{-1}-w^{-2}z-w^{-3}z^2-\cdots.
\end{equation}
The expression on the right hand side of \eqref{eq:f_expand} is the Laurent expansion over $\{\zet{z}<\zet{w}\}$ and that of \eqref{eq:g_expand} is the Laurent expansion over $\{\zet{w}<\zet{z}\}$.

In general, a formal series\footnote{Formally, a ``formal series'' is an element of $V[[z_1^{\pm 1},\dots,z_r^{\pm 1} ]]$.} 
\[
\displaystyle \sum_{n_1,n_2,\dots,n_r\in \ZZ}v_{n_1,n_2,\dots,n_r}z_1^{n_1}z_2^{n_2}\cdots z_r^{n_r},\qquad (v_{n_1,n_2,\dots,n_r}\in V)
\]
is contained in $V((z_1))((z_2))\dots ((z_r))$ if and only if 
\begin{enumerate}
\item $\displaystyle\sum_{n_1,\dots,n_{r-1}} v_{n_1,\dots,n_{r-1},n_r}z_1^{n_1}\dots z^{n_{r-1}}_{r-1}$\quad $\in V((z_1))\dots ((z_{r-1}))$ and 
\item there exists some $d$ such that  $v_{n_1,\dots,n_r}\neq 0$ $\Rightarrow$ $n_r+d\geq 0$.
\end{enumerate}
In formal language, this condition is equivalent to say 
\begin{gather}\label{eq:formal_condition_inclusion}
\exists d_r
\forall n_r
\exists d_{r-1}
\forall n_{r-1}\cdots
\exists d_{1}
\forall n_{1}
\left(
v_{n_1,\dots,n_r}\neq 0\Rightarrow
\bigwedge_{i=1}^r(d_i+n_i\geq 0)
\right).
\end{gather}

The following lemma provides a handy sufficient condition for a formal series to be contained in $V((z_1))((z_2))\dots ((z_r))$.
\begin{lemma}\label{lemma:creterion_series}
A formal series
\[
\displaystyle \sum_{n_1,n_2,\dots,n_r\in \ZZ}v_{n_1,n_2,\dots,n_r}z_1^{n_1}z_2^{n_2}\cdots z_r^{n_r},\qquad (v_{n_1,n_2,\dots,n_r}\in V)
\]
is contained in $V((z_1))((z_2))\dots ((z_r))$ if there exists some $d$ such that
\[
v_{n_1,n_2,\dots,n_r}\neq 0
\Rightarrow
\begin{cases}
d+n_r\geq 0,\\ 
d+n_r+n_{r-1}\geq 0,\\
d+n_r+n_{r-1}+n_{r-2}\geq 0,\\
\vdots \\
d+n_r+\cdots +n_{1}\geq 0.
\end{cases}
\]
\end{lemma}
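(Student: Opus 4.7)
The plan is to verify the nested formal series condition \eqref{eq:formal_condition_inclusion} by choosing the threshold values $d_1, d_2, \ldots, d_r$ to be successive partial sums of $n_i$'s built from the single constant $d$ given in the hypothesis. This reduces the proof to simple algebraic manipulation of the inequalities already supplied.

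More concretely, I would first recall that by the discussion immediately preceding the lemma, a formal series lies in $V((z_1))\cdots((z_r))$ exactly when the nested quantifier condition \eqref{eq:formal_condition_inclusion} holds: there exists $d_r$ such that for every $n_r$ there exists $d_{r-1}$ such that for every $n_{r-1}$, and so on, with the implication that if $v_{n_1,\ldots,n_r}\neq 0$ then $d_i + n_i \geq 0$ for all $i$. The task is therefore to exhibit witnesses $d_r, d_{r-1}, \ldots, d_1$ satisfying this implication from the single constant $d$ of the hypothesis.

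The natural choice is a telescoping one:
\[
d_r := d,\qquad d_{r-1} := d + n_r,\qquad d_{r-2} := d + n_r + n_{r-1},\qquad \ldots,\qquad d_1 := d + n_r + n_{r-1} + \cdots + n_2.
\]
Each $d_i$ depends only on the variables $n_r, n_{r-1}, \ldots, n_{i+1}$ which have already been quantified at that stage, so this choice is legitimate under the quantifier prefix. With these values, one has $d_i + n_i = d + n_i + n_{i+1} + \cdots + n_r$ for every $i \in \{1,\ldots,r\}$, which is precisely one of the inequalities assumed in the hypothesis whenever $v_{n_1,\ldots,n_r}\neq 0$.

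The remaining (and only) step is a clean verification: assuming $v_{n_1,\ldots,n_r}\neq 0$, the hypothesis gives $d + n_r + n_{r-1} + \cdots + n_i \geq 0$ for each $i$, hence $d_i + n_i \geq 0$ for each $i$, establishing the conjunction in \eqref{eq:formal_condition_inclusion}. There is no real obstacle here; the content is simply that the uniform-in-$i$ bound of the hypothesis is strong enough to provide the iterated bounds demanded by the definition of $V((z_1))((z_2))\cdots((z_r))$, and the telescoping assignment makes this transparent.
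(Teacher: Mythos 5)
Your proof is correct and is essentially identical to the paper's: both choose the telescoping witnesses $d_r=d$, $d_{r-1}=d+n_r$, \dots, $d_1=d+n_r+\cdots+n_2$ and observe that the hypothesis then yields the nested condition \eqref{eq:formal_condition_inclusion}. No issues.
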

\begin{proof}
Put $d_{r}=d$, $d_{r-1}=d+n_r$, $d_{r-2}=d+n_r+n_{r-1}$, \dots, $d_1=d+n_r+\cdots+n_2$.
Then the condition \eqref{eq:formal_condition_inclusion} is satisfied.
\end{proof}

\subsection{Vacuum expectation value of $\bphi_m\bphi_n$}

Here we demonstrate how to calculate the vacuum expectation value $\bra{0} \phi^{(\beta)}_m\phi^{(\beta)}_n\ket{0}$.
Let us consider the formal series
\[
\bphi(z)=\sum_{n\in \ZZ}\bphi_nz^{n}.
\]
By Lemma \ref{lemma:gradeing_of_bphi_on_E}, $\bra{w}\in \mathcal{E}$ implies $\bra{w}\bphi(z)\in \mathcal{E}((z^{-1}))$.
More generally, we have:
\begin{lemma}
Assume $\bra{w}\in \mathcal{E}$.
Then we have
\[
\bra{w}\bphi(z_1)\bphi(z_2)\cdots \bphi(z_r)\qquad
\in \mathcal{E}((z_r^{-1}))\cdots (((z_2^{-1}))((z_1^{-1})).
\]
\end{lemma}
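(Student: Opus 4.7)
The plan is to read off, from the action of each $\bphi(z_i)$ on $\mathcal{E}$, a support condition on the Laurent coefficients of $\bra{w}\bphi(z_1)\cdots\bphi(z_r)$, and then deduce the claimed membership by invoking Lemma \ref{lemma:creterion_series}.

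First I would reduce to the case $\bra{w}\in\mathcal{E}_s$ for a single $s\in\ZZ$; since every element of $\mathcal{E}$ is a finite combination of such graded pieces, the general statement follows by $k$-linearity. Expanding formally,
\[
\bra{w}\bphi(z_1)\cdots\bphi(z_r)=\sum_{n_1,\ldots,n_r\in\ZZ}\bra{w}\bphi_{n_1}\cdots\bphi_{n_r}\,z_1^{n_1}\cdots z_r^{n_r},
\]
an iteration of Lemma \ref{lemma:gradeing_of_bphi_on_E} places each partial product $\bra{w}\bphi_{n_1}\cdots\bphi_{n_k}$ in $\mathcal{E}_{s+n_1+\cdots+n_k}$, so every coefficient is a well-defined element of $\mathcal{E}$. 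Because $\mathcal{E}_t=\{0\}$ whenever $t>0$ (every basis vector $\bra{0}\phi_{m_1}\cdots\phi_{m_l}$ of $\mathcal{E}$ has nonpositive degree), the coefficient $\bra{w}\bphi_{n_1}\cdots\bphi_{n_r}$ can be nonzero only when
\[
s+n_1+n_2+\cdots+n_k\leq 0\qquad(k=1,2,\ldots,r).
\]

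The remaining step is a change of variables. Setting $m_i=-n_i$, the support condition becomes $m_1+\cdots+m_k\geq s$ for $k=1,\ldots,r$, which, up to the constant shift $d=-s$, is exactly the chain of cumulative inequalities in the hypothesis of Lemma \ref{lemma:creterion_series}. Applying that Lemma in the variables $z_i^{-1}$ requires matching the nestings: in the target space $\mathcal{E}((z_r^{-1}))\cdots((z_1^{-1}))$ the outermost expansion variable is $z_1^{-1}$, corresponding to the first fermion $\bphi(z_1)$, whereas Lemma \ref{lemma:creterion_series} is formulated with the last variable outermost. After this relabeling, the tail partial sums in the Lemma become precisely the head partial sums $m_1+\cdots+m_k$, and the Lemma delivers the desired membership. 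The one point requiring care is exactly this orientation of the nesting; no deeper algebraic or analytic difficulty intervenes.
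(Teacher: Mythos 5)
Your proof is correct and follows essentially the same route as the paper: restrict to a graded piece $\bra{w}\in\mathcal{E}_s$, iterate Lemma \ref{lemma:gradeing_of_bphi_on_E} to get the vanishing condition $s+n_1+\cdots+n_k\leq 0$ on the partial sums, and conclude via Lemma \ref{lemma:creterion_series}. Your explicit remark about matching the orientation of the nested Laurent rings (outermost variable $z_1^{-1}$ versus the lemma's last-variable-outermost convention) is a detail the paper leaves implicit, and it is handled correctly.
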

\begin{proof}
Suppose $\bra{w}\in \mathcal{E}_s$ for some $s$.
Consider the expansion 
\begin{equation*}\label{eq:formal_ex}
\bra{w}\bphi(z_1)\bphi(z_2)\cdots \bphi(z_r)
=
\sum_{n_1,\dots,n_r}
\left\{\bra{w}\bphi_{n_1}\bphi_{n_2}\cdots \bphi_{n_r} \right\}
z_1^{n_1}z_2^{n_2}\cdots z_r^{n_r}.
\end{equation*}
By Lemma \ref{lemma:gradeing_of_bphi_on_E}, the contribution of the index $(n_1,n_2,\dots,n_r)$ on the right hand side 
survives only if 
\[
s+n_1\leq 0,\quad
s+n_1+n_2\leq 0, \quad
s+n_1+n_2+n_3\leq 0, \quad \cdots,\quad
s+n_1+\cdots+n_r\leq 0.
\]
Therefore, the conclusion follows from Lemma \ref{lemma:creterion_series}.
\end{proof}

\begin{proposition}\label{prop:bphi_vs_phi}
Let $\bra{w}\in \mathcal{E}$.
Then we have\footnote{In $k((z^{-1}))$, we have
\[
\left(
z+\frac{\beta}{2}
\right)^{-1}
=z^{-1}-\frac{\beta}{2}z^{-2}+\frac{\beta^2}{4}z^{-3}-\cdots.
\]
}
\[
\bra{w}\bphi(z)=\bra{w}\phi(z+\tfrac{\beta}{2})\qquad \mbox{in}\quad \mathcal{E}((z^{-1})).
\]
\end{proposition}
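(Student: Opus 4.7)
The plan is to verify the identity directly from the definition~\eqref{eq:beta_generating_function} of $\bphi_n$. After observing that $\frac{z^{-1}}{1+(\beta/2)z^{-1}}=\frac{1}{z+\beta/2}$, that definition says precisely that, as a formal series in $z^{\pm 1}$,
\[
\bphi(z)=\phi\!\left(z+\tfrac{\beta}{2}\right),
\]
where $(z+\beta/2)^n$ for $n<0$ is interpreted as the expansion $z^n(1+(\beta/2)z^{-1})^n\in k((z^{-1}))$. The proposition therefore reduces to checking that, once we pair both sides with $\bra{w}$, they converge to the same element of $\mathcal{E}((z^{-1}))$.

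The key technical ingredient I would establish first is the vanishing
\[
\bra{w}\phi_n=0\quad \text{for all sufficiently large } n>0.
\]
This follows from the graded structure~\eqref{eq:graded_action}: if $\bra{w}\in\mathcal{E}_s$ then $\bra{w}\phi_n\in\mathcal{E}_{s+n}$, and $\mathcal{E}_t=0$ whenever $t>0$ since every basis vector~\eqref{eq:E-vectors} of $\mathcal{E}$ has non-positive index sum. Consequently, for a fixed $\bra{w}$ only finitely many $\bra{w}\phi_n$ with $n\geq 0$ are nonzero. Two things follow: each $\bra{w}\bphi_k$ is a finite linear combination of the $\bra{w}\phi_{n'}$'s and is therefore unambiguous in $\mathcal{E}$; and the non-negative part $\bra{w}\sum_{n\geq 0}\phi_n(z+\beta/2)^n$ collapses to a finite sum of polynomials, landing in $\mathcal{E}[z]\subset\mathcal{E}((z^{-1}))$. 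The negative part $\bra{w}\sum_{n>0}\phi_{-n}(z+\beta/2)^{-n}$ lies in $\mathcal{E}((z^{-1}))$ because, for each fixed $m>0$, only the indices $1\leq n\leq m$ contribute to the coefficient of $z^{-m}$.

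It then remains to compare the coefficient of $z^k$ on the two sides. For $k\geq 0$, the coefficient of $\bra{w}\bphi(z)$ is $\bra{w}\bphi_k=\sum_{n\geq k}\binom{n}{k}(\beta/2)^{n-k}\bra{w}\phi_n$ by the first half of~\eqref{eq:beta_generating_function}, which is exactly the coefficient one obtains after swapping the sums in $\bra{w}\sum_{n\geq 0}\phi_n(z+\beta/2)^n$. For $k=-m<0$, using $(z+\beta/2)^{-n}=\sum_{\ell\geq 0}\binom{-n}{\ell}(\beta/2)^{\ell}z^{-n-\ell}$ together with the second half of~\eqref{eq:beta_generating_function} yields the same match. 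Both interchanges of summation are legitimate because, by the vanishing above and the constraint $1\leq n\leq m$ on the negative side, each coefficient in question is a finite sum. I expect the main obstacle to be purely bookkeeping: carefully distinguishing the two expansions of $(z+\beta/2)^n$ (polynomial for $n\geq 0$, Laurent series in $z^{-1}$ for $n<0$) and confirming that the sums being interchanged really are finite once $\bra{w}$ has been applied.
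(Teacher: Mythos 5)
Your proof is correct and takes essentially the same route as the paper, which simply notes that the identity is a direct consequence of the definition \eqref{eq:beta_generating_function} (using $\frac{z^{-1}}{1+\frac{\beta}{2}z^{-1}}=(z+\frac{\beta}{2})^{-1}$). Your additional bookkeeping --- in particular the observation that $\bra{w}\phi_n=0$ for all sufficiently large $n$, which makes every coefficient a finite sum --- is exactly the content that justifies the paper's one-line proof.
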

\begin{proof}
This is a direct consequence of \eqref{eq:beta_generating_function}.
\end{proof}

\begin{proposition}\label{prop:phiphi}
We have
\[
\bra{0}
\phi^{(\beta)}(z)\phi^{(\beta)}(w)
\ket{0}
=\frac{z-w}{z+w+\beta}\quad \mbox{in}\quad k((w^{-1}))((z^{-1})).
\]
Here the rational function $\dfrac{z-w}{z+w+\beta}$ is expanded as
\[
\frac{1-wz^{-1}}{1+wz^{-1}+\beta z^{-1}}
=1-(2w+\beta)z^{-1}+(2w^2+3\beta w+\beta^2)z^{-2}-\cdots,
\]
which is the Laurent expansion on the region $\{\zet{\beta}\ll\zet{w}\ll \zet{z}\}$.
\end{proposition}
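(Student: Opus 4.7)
The plan is to reduce the $\beta$-deformed two-point function to the undeformed one by applying Proposition \ref{prop:bphi_vs_phi} twice. I would begin by invoking Proposition \ref{prop:bphi_vs_phi} with $\bra{w}=\bra{0}$ to rewrite $\bra{0}\bphi(z)=\bra{0}\phi(z+\beta/2)$ in $\mathcal{E}((z^{-1}))$. Since $\bra{0}\phi_m=0$ for $m>0$, only finitely many $\bra{0}\phi_n$ (with $n\leq 0$) contribute to each power of $z$ after expanding $(z+\beta/2)^m$, so the coefficient of each $z^l$ is a genuine element of $\mathcal{E}$. This makes Proposition \ref{prop:bphi_vs_phi} applicable coefficient-wise on the right factor $\bphi(w)$ as well, which would produce
\[
\bra{0}\bphi(z)\bphi(w)\ket{0}=\bra{0}\phi(z+\tfrac{\beta}{2})\phi(w+\tfrac{\beta}{2})\ket{0}
\]
in $k((w^{-1}))((z^{-1}))$, with $(z+\beta/2)^m$ expanded in $k((z^{-1}))$ and $(w+\beta/2)^n$ in $k((w^{-1}))$.

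The next step is to compute the undeformed two-point function $\bra{0}\phi(u)\phi(v)\ket{0}$. A direct case analysis, using the anticommutation relation \eqref{eq:duality_original}, the vacuum conditions $\bra{0}\phi_m=0$ ($m>0$) and $\phi_n\ket{0}=0$ ($n<0$), and $\phi_0^2=1$, shows that $\bra{0}\phi_m\phi_n\ket{0}$ vanishes unless either $m=n=0$ (with value $1$) or $m<0$ and $n=-m$ (with value $2(-1)^m$). Summing the resulting geometric series on $\zet{v}<\zet{u}$ then collapses to
\[
1+2\sum_{m\geq 1}(-1)^m(v/u)^m=\frac{u-v}{u+v}.
\]

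Finally I would substitute $u=z+\beta/2$ and $v=w+\beta/2$; the numerator becomes $z-w$ and the denominator $z+w+\beta$, yielding the claimed identity. The main subtlety is matching the Laurent regions so that all the series manipulations coexist in a single formal ring: the expansions $(z+\beta/2)^m\in k((z^{-1}))$ and $(w+\beta/2)^n\in k((w^{-1}))$ require $\zet{\beta}\ll\zet{w},\zet{z}$, while convergence of the geometric series above requires $\zet{v}<\zet{u}$, which after substitution amounts to $\zet{w}<\zet{z}$. Together these give precisely the hierarchy $\zet{\beta}\ll\zet{w}\ll\zet{z}$ in the statement. Modulo this bookkeeping the argument is routine.
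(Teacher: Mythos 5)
Your proof is correct and follows essentially the same route as the paper: the paper also first establishes $\bra{0}\phi(z)\phi(w)\ket{0}=\frac{z-w}{z+w}$ directly from the anticommutation relation \eqref{eq:duality_original} and then applies Proposition \ref{prop:bphi_vs_phi} to both factors to substitute $z\mapsto z+\frac{\beta}{2}$, $w\mapsto w+\frac{\beta}{2}$. Your additional remarks justifying the coefficient-wise application of Proposition \ref{prop:bphi_vs_phi} to the second factor and the matching of Laurent regions are details the paper leaves implicit.
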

\begin{proof}
From \eqref{eq:duality_original}, we can directly prove the following equation 
\[
\bra{0} \phi(z)\phi(w)\ket{0}=\frac{z-w}{z+w}=1-2wz^{-1}+2w^2z^{-2}-\cdots\quad
\mbox{in}\ k((w^{-1}))((z^{-1})).
\]
By using this equation and Proposition \ref{prop:bphi_vs_phi}, we have
\begin{align*}
\bra{0} \bphi(z)\bphi(w)\ket{0}
&=
\bra{0} \phi(z+\tfrac{\beta}{2})\phi(w+\tfrac{\beta}{2})\ket{0}\\
&=\frac{(z+\frac{\beta}{2})-(w+\frac{\beta}{2}) }{(z+\frac{\beta}{2})+(w+\frac{\beta}{2})}
=\frac{z-w}{z+w+\beta}.
\end{align*}
\end{proof}

Proposition \ref{prop:phiphi} implies the nontrivial fact that the vacuum expectation value $\bra{0} \bphi_m\bphi_n\ket{0}$ must be of the form $(\mbox{an integer})\times \beta^{-m-n}$.
For example, we have $\bra{0}\bphi_0\bphi_0\ket{0}=1$, $\bra{0}\bphi_{-1}\bphi_0\ket{0}=-\beta$, and $\bra{0}\bphi_{-1}\bphi_1\ket{0}=-2$.

\section{Three $\beta$-deformed operators
$\bb_n$, $e^{\Hb}$, and $e^\Theta$
}\label{sec:Heisenberg}

We introduce three new operators $\bb_n$, $e^{\Hb}$, and $e^\Theta$, which will appear in the statement of the main theorem.
Commutation relations among these operators are essential building blocks for our calculations.

\subsection{$\beta$-deformed Heisenberg generator $\bb_n$}

At the beginning of this section, we consider the commutator $[b_m,\phi^{(\beta)}_n]=b_m\phi^{(\beta)}_n-\phi^{(\beta)}_nb_m$, which defines the $k$-linear map $\mathcal{E}\ni \bra{w}\mapsto \bra{w}[b_m,\phi^{(\beta)}_n]\in \mathcal{E}$.
\begin{proposition}\label{prop:b_and_phi(beta)}
For $\bra{w}\in \mathcal{E}$, we have 
\[
\bra{w}[b_m,\phi^{(\beta)}(z)]=
\left(
z+\frac{\beta}{2}
\right)^{m}\cdot \bra{w}\phi^{(\beta)}(z)\quad \mbox{in}\quad \mathcal{E}((z^{-1})).
\]
\end{proposition}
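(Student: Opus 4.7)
The plan is to reduce the statement to the known commutation relation $[b_m,\phi_n]=\phi_{n-m}$ for the undeformed neutral fermions, and then transfer the result through the identification $\bra{w}\bphi(z)=\bra{w}\phi(z+\tfrac{\beta}{2})$ already established in Proposition \ref{prop:bphi_vs_phi}. In other words, the whole proposition should just be the undeformed identity $[b_m,\phi(y)]=y^{m}\phi(y)$ evaluated at $y=z+\tfrac{\beta}{2}$.

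The steps I would carry out are as follows. First, multiply the identity $[b_m,\phi_n]=\phi_{n-m}$ from \eqref{eq:b_and_phi} by $y^n$ and sum over $n\in\mathbb{Z}$ to obtain the formal identity
\[
[b_m,\phi(y)]=y^{m}\phi(y)
\]
in $\mathcal{A}[[y^{\pm 1}]]$. Second, since $b_m$ acts $k$-linearly on $\mathcal{E}$ and is independent of the variable $z$, the commutator $[b_m,\cdot]$ is compatible with the identification of Proposition \ref{prop:bphi_vs_phi}, giving $\bra{w}[b_m,\bphi(z)]=\bra{w}[b_m,\phi(z+\tfrac{\beta}{2})]$ in $\mathcal{E}((z^{-1}))$. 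Third, specialize the formal identity to $y=z+\tfrac{\beta}{2}$: for $m\geq 0$ the factor $(z+\tfrac{\beta}{2})^{m}$ is a polynomial in $z$, while for $m<0$ we expand it as a Laurent series in $k((z^{-1}))$ via the binomial formula (exactly as in the footnote to Proposition \ref{prop:bphi_vs_phi}). Multiplying by $\bra{w}\phi(z+\tfrac{\beta}{2})=\bra{w}\bphi(z)$ then yields the desired equality.

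The main obstacle, such as it is, is bookkeeping rather than conceptual: one needs to check that all of the formal series manipulations above remain inside $\mathcal{E}((z^{-1}))$. Concretely, after invoking Lemma \ref{lemma:gradeing_of_bphi_on_E} to see that $\bra{w}\bphi(z)\in\mathcal{E}((z^{-1}))$, one must verify that multiplication by the Laurent expansion of $(z+\tfrac{\beta}{2})^m\in k((z^{-1}))$ does not destroy membership in $\mathcal{E}((z^{-1}))$; Lemma \ref{lemma:creterion_series} provides exactly the sufficient condition needed, since the grading bound $\bra{w}\bphi_n\in\mathcal{E}_{s+n}$ propagates correctly when shifted by the bounded-below exponents appearing in the expansion of $(z+\tfrac{\beta}{2})^m$. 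Once this compatibility is checked, equating coefficients of $z^n$ on both sides completes the argument.
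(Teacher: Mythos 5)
Your proposal is correct and follows exactly the paper's route: the paper's proof is the one-line observation that the statement follows from the undeformed relation \eqref{eq:b_and_phi} (equivalently $[b_m,\phi(y)]=y^m\phi(y)$) combined with the identification $\bra{w}\bphi(z)=\bra{w}\phi(z+\tfrac{\beta}{2})$ of Proposition \ref{prop:bphi_vs_phi}. The extra bookkeeping you supply about staying inside $\mathcal{E}((z^{-1}))$ is a sound elaboration of what the paper leaves implicit.
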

\begin{proof}
This follows from \eqref{eq:b_and_phi} and Proposition \ref{prop:bphi_vs_phi}.
\end{proof}
Proposition \ref{prop:b_and_phi(beta)} is simply rephrased as
\begin{equation}\label{eq:bphi_z}
[b_m,\phi^{(\beta)}(z)]=
\left(
z+\frac{\beta}{2}
\right)^{m}\phi^{(\beta)}(z),
\end{equation}
over the vector space
$\mathrm{Hom}_k(\mathcal{E},\mathcal{E}((z^{-1})) )$.

Let us now define the \textit{$\beta$-deformed Heisenberg operator} $\bb_m$ ($m\in \ZZ$) by
\begin{equation}\label{eq:def_of_bb}
\bb_m=\left.\frac{(X-\frac{\beta}{2})^m-(-X-\frac{\beta}{2})^m}{2}\right|_{X^k\mapsto b_k},
\end{equation}
in which $\frac{(X-\frac{\beta}{2})^m-(-X-\frac{\beta}{2})^m}{2}$ is expanded as a polynomial in $X$ if $m\geq 0$ and as a Taylor series in $X^{-1}$ if $m<0$.
Below, we list a few examples:
\begin{gather*}
\bb_0=0,\qquad \bb_1=b_1,\qquad\bb_2=-\beta b_1,\qquad \bb_3=b_3+\frac{3\beta^2}{4}b_1,\\
\bb_{-1}=b_{-1}+\frac{\beta^2}{4}b_{-3}+\frac{\beta^4}{16}b_{-5}+\cdots,\qquad
\bb_{-2}=\beta b_{-3}+\frac{\beta^3}{2} b_{-5}+\cdots.
\end{gather*}

\begin{proposition}\label{prop:bb_bphi}
We have
\[
[\bb_m,\bphi(z)]=\frac{z^m-(-z-\beta)^m}{2}\bphi(z)\quad
\mbox{in}\ \mathrm{Hom}_k(\mathcal{E},\mathcal{E}((z^{-1})) ).
\]
\end{proposition}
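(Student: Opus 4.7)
The plan is to reduce the claim to the elementary commutation $[b_k, \bphi(z)] = (z+\tfrac{\beta}{2})^k \bphi(z)$ from Proposition \ref{prop:b_and_phi(beta)} by exploiting the $k$-linearity of the commutator in its first argument. Writing the defining expression as $P_m(X) := \frac{(X-\beta/2)^m - (-X-\beta/2)^m}{2} = \sum_k c_k X^k$ (a polynomial in $X$ if $m \geq 0$, a Taylor expansion in $X^{-1}$ if $m<0$), so that by definition $\bb_m = \sum_k c_k b_k$, linearity and Proposition \ref{prop:b_and_phi(beta)} yield
\[
[\bb_m, \bphi(z)] = \sum_k c_k [b_k, \bphi(z)] = \left(\sum_k c_k (z+\tfrac{\beta}{2})^k\right) \bphi(z).
\]
The scalar factor is nothing but the formal substitution $P_m(z+\tfrac{\beta}{2})$, which simplifies by direct cancellation inside $P_m$ to $\frac{z^m - (-z-\beta)^m}{2}$, giving the claim.

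For $m \geq 0$ there is nothing further to do, since $\bb_m$ is a finite linear combination of the $b_k$ and $P_m(z+\tfrac{\beta}{2})$ is a polynomial identity. The substantive work lies in $m<0$, where both the sum defining $\bb_m$ and its substitution are infinite, and one must check three things: (i) $\bb_m$ is a well-defined operator on $\mathcal{E}$; (ii) the scalar factor $\sum_k c_k (z+\tfrac{\beta}{2})^k$ converges in $k((z^{-1}))$ to the Laurent expansion of $\frac{z^m - (-z-\beta)^m}{2}$; and (iii) the termwise application of $[b_k,\bphi(z)]$ is formally legitimate in $\mathrm{Hom}_k(\mathcal{E}, \mathcal{E}((z^{-1})))$. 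For (i), the grading property \eqref{eq:graded_action} gives $\bra{w} b_k \in \mathcal{E}_{s-k}$ for $\bra{w} \in \mathcal{E}_s$, which vanishes unless $k \geq s$; since $\bra{w}$ has finitely many homogeneous components, only finitely many terms of $\bra{w}\bb_m$ are nonzero. For (ii), each $(z+\tfrac{\beta}{2})^k$ with $k \leq -1$ is a Laurent series in $z^{-1}$ starting at $z^k$, so by Lemma \ref{lemma:creterion_series} the double sum lies in $k((z^{-1}))$, and it matches the direct Laurent expansion of $\frac{z^m - (-z-\beta)^m}{2}$ over $\{|\beta|\ll|z|\}$ because both are obtained by iterated geometric-series expansion of the same rational function. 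Part (iii) follows from (i) applied to each coefficient of $\bra{w}\bphi(z) \in \mathcal{E}((z^{-1}))$ together with the same grading truncation.

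The main obstacle is purely this formal-series bookkeeping for $m<0$; once the grading and Lemma \ref{lemma:creterion_series} are invoked, the identity in $\mathrm{Hom}_k(\mathcal{E}, \mathcal{E}((z^{-1})))$ reduces to linearity over Proposition \ref{prop:b_and_phi(beta)} combined with the trivial substitution $X = z+\tfrac{\beta}{2}$ inside $P_m(X)$.
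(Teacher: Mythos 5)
Your proposal is correct and follows essentially the same route as the paper: substitute $X=z+\tfrac{\beta}{2}$ into the defining expression \eqref{eq:def_of_bb} for $\bb_m$ and invoke Proposition \ref{prop:b_and_phi(beta)} termwise, so that $\tfrac{(X-\beta/2)^m-(-X-\beta/2)^m}{2}$ becomes $\tfrac{z^m-(-z-\beta)^m}{2}$. The only difference is that you spell out the formal-series and grading justifications for $m<0$, which the paper leaves implicit; these checks are accurate and harmless additions.
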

\begin{proof}
From Proposition \ref{prop:b_and_phi(beta)} and \eqref{eq:def_of_bb}, we have
\[
[\bb_m,\bphi(z)]=\frac{((z+\frac{\beta}{2})-\frac{\beta}{2})^m-(-(z+\frac{\beta}{2})-\beta)^m}{2}\bphi(z),
\]
which gives the proposition.
\end{proof}

\subsection{The operator $e^{\Hb}$}
Let $p_1,p_2,p_3,\dots$ be (commutative) indeterminate.
Set
\[
\mathcal{H}=
2\sum_{n=1,3,5,\dots}\frac{p_n}{n} b_n,\qquad
\mathcal{H}^{(\beta)}=
2\sum_{n=1}^{\infty}\frac{p_n}{n} \bb_n.
\]
Notice that $\mathcal{H}^{(\beta)}$ reduces to $\mathcal{H}$ when $\beta=0$.
Let $\mathcal{F}[[p]]:=\mathcal{F}[[p_1,p_2,p_3,\dots]]$ and $\mathcal{E}[[p]]:=\mathcal{E}[[p_1,p_2,p_3,\dots]]$ be the vector space of infinite series in $p_1,p_2,\dots$ whose coefficients are elements of $\mathcal{F}$ and $\mathcal{E}$, respectively. 
We can check that the formal series $\mathcal{H}^{(\beta)}$ defines the $k$-linear maps
$
\mathcal{F}\ni\ket{v}\mapsto \Hb\ket{v}\in \mathcal{F}[[p]]
$
and
$
\mathcal{E}\ni\bra{w}\mapsto \bra{w}\Hb\in \mathcal{E}[[p]]
$
unambiguously.

We are interested in the exponential map
\[
e^{\Hb}=\mathrm{id}+\Hb+\frac{{\Hb}^2 }{2!}+\frac{{\Hb}^3}{3!}+\cdots,
\]
which also defines the $k$-linear maps
$\ket{v}\mapsto e^{\Hb}\ket{v}$
and
$
\bra{w}\mapsto \bra{w}e^{\Hb}
$.
To demonstrate a direct calculation of the action of $e^{\Hb}$ is not easy in general, but possible only for some easiest cases.
For example, we have $e^{\Hb}\ket{0}=\ket{0}$ and $e^{\Hb}\phi_{1}\ket{0}=(\phi_1+(2p_1-\beta p_2)\phi_0)\ket{0}$.
An effective method to deal with them is the use of commutative relations of formal series as shown in the following lemma:
\begin{lemma}\label{lemma:e^Hphi^(beta)}
In $\mathrm{Map}(\mathcal{E},\mathcal{E}((z^{-1}))[[p]] )$, we have\footnote{Note the difference between $\mathcal{E}((z^{-1}))[[p]]$ and $\mathcal{E}[[p]]((z^{-1}))$.}
\[
e^{\Hb}\phi^{(\beta)}(z)=
\exp\left\{
\sum_{n=1}^\infty
\frac{p_n}{n}
(z^{n}-(-z-\beta)^n)
\right\}
\phi^{(\beta)}(z)e^{\Hb}.
\]
\end{lemma}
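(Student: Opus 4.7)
The plan is to apply the standard exponential commutator identity: if $[X,Y]=cY$ with $X$ commuting with the scalar $c$, then $e^X Y = e^c\,Y\,e^X$. Here I take $X=\Hb$, $Y=\bphi(z)$, and the scalar $c$ will turn out to be the desired exponent.

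First I would establish the key commutator. Since $\Hb=2\sum_{n\geq 1}\frac{p_n}{n}\bb_n$, Proposition \ref{prop:bb_bphi} and linearity give
\[
[\Hb,\bphi(z)]=\sum_{n=1}^\infty\frac{p_n}{n}\bigl(z^n-(-z-\beta)^n\bigr)\bphi(z)=f(z)\,\bphi(z),
\]
where I set $f(z):=\sum_{n\geq 1}\frac{p_n}{n}(z^n-(-z-\beta)^n)$. Since $f(z)$ is a scalar series (an element of $k[\beta][z][[p]]$), it commutes with $\Hb$ and with $\bphi(z)$. Rearranging, $\Hb\,\bphi(z)=\bphi(z)(\Hb+f(z))$, and a straightforward induction on $k\geq 0$ yields $\Hb^k\,\bphi(z)=\bphi(z)(\Hb+f(z))^k$.

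Summing these identities with weights $1/k!$ gives $e^{\Hb}\,\bphi(z)=\bphi(z)\,e^{\Hb+f(z)}$. Since $\Hb$ and $f(z)$ commute we may split $e^{\Hb+f(z)}=e^{f(z)}e^{\Hb}$, and since the scalar $e^{f(z)}$ commutes with $\bphi(z)$ we conclude
\[
e^{\Hb}\,\bphi(z)=e^{f(z)}\,\bphi(z)\,e^{\Hb},
\]
which is the claimed identity after expanding $e^{f(z)}=\exp\{\sum_{n\geq 1}\frac{p_n}{n}(z^n-(-z-\beta)^n)\}$.

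The only real obstacle is bookkeeping in the formal spaces: one must verify that every manipulation is legitimate in $\mathrm{Map}(\mathcal{E},\mathcal{E}((z^{-1}))[[p]])$. Convergence in the $p$-direction is immediate from the natural $p$-filtration, since each $\bb_n$ contributes a factor of $p_n$ and so only finitely many terms of $e^{\Hb}$ affect any fixed total $p$-degree; the $z^{-1}$-direction is handled by Lemma \ref{lemma:gradeing_of_bphi_on_E}, which guarantees that $\bphi(z)$ maps $\mathcal{E}$ into $\mathcal{E}((z^{-1}))$ and hence, after inserting the $p$-coefficients, into $\mathcal{E}((z^{-1}))[[p]]$. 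I expect this verification to be purely routine rather than conceptually difficult.
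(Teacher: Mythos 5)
Your proof is correct and follows essentially the same route as the paper: both rest on the commutator $[\Hb,\bphi(z)]=\bigl(\sum_{n\geq 1}\tfrac{p_n}{n}(z^n-(-z-\beta)^n)\bigr)\bphi(z)$ obtained from Proposition \ref{prop:bb_bphi} by linearity, followed by the standard exponential-conjugation identity (the paper cites its formula \eqref{eq:formula_commutative}, while you re-derive it by induction on powers of $\Hb$). The formal-space bookkeeping you sketch is exactly the routine verification the paper leaves implicit.
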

\begin{proof}
This follows from
\begin{equation}\label{eq:formula_commutative}
\quad [A,[A,B]]=[B,[A,B]]=0
\quad\Rightarrow\quad
e^{A}e^{B}=e^{[A,B]}e^{B}e^{A}
\end{equation}
and Proposition \ref{prop:bb_bphi}.
\end{proof}

\subsection{The operator $e^{\Theta}$}

We next consider the $k$-linear map $\mathcal{E}\ni\bra{w}\mapsto \bra{w}\Theta\in \mathcal{E}$ defined by
\begin{gather*}
\Theta:=
2\left(\frac{\beta}{2} b_{-1}+\frac{\beta^3}{2^3}\frac{b_{-3}}{3}+
\frac{\beta^5}{2^5}\frac{b_{-5}}{5}+\cdots\right).
\end{gather*}
The exponential map $e^{\Theta}:\mathcal{E}\to\mathcal{E}$ is also well-defined.
The simplest and most important equation about $e^\Theta$ is $\bra{0}e^\Theta=\bra{0}$.

\begin{lemma}\label{lemma:e^He^Theta}
In $\mathrm{Map}(\mathcal{E},\mathcal{E}[[p]] )$, we have
\[
e^{\Hb}e^{\Theta}=\exp\left(-\sum_{n=1}^\infty
\frac{p_n}{n}(-\beta)^n
\right)e^{\Theta}e^{\Hb}.
\]
\end{lemma}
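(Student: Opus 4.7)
The plan is to reduce the identity to the corollary \eqref{eq:formula_commutative} of Baker--Campbell--Hausdorff, so the entire content lies in computing the commutator $[\Hb,\Theta]$ and checking that it is a scalar. Once centrality is established, both $[\Hb,[\Hb,\Theta]]$ and $[\Theta,[\Hb,\Theta]]$ vanish trivially, so \eqref{eq:formula_commutative} with $A=\Hb$ and $B=\Theta$ yields $e^{\Hb}e^{\Theta}=e^{[\Hb,\Theta]}e^{\Theta}e^{\Hb}$, which is the desired identity provided $[\Hb,\Theta]=-\sum_{n\ge1}\frac{p_n}{n}(-\beta)^n$.

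First I would observe that $\bb_m$ is supported on odd-indexed Heisenberg generators. Since $\frac{(X-\beta/2)^m-(-X-\beta/2)^m}{2}$ is an odd function of $X$, only odd powers $X^{2k+1}$ survive the substitution $X^j\mapsto b_j$ in \eqref{eq:def_of_bb}. The operator $\Theta$ is by construction a $\QQ(\beta)$-linear combination of $b_{-(2k+1)}$ for $k\ge 0$, and $[b_j,b_k]=\frac{j}{2}\delta_{j+k,0}$ is scalar-valued, so each commutator $[\bb_n,\Theta]$ is automatically a scalar, and hence so is $[\Hb,\Theta]=2\sum_{n\ge1}\frac{p_n}{n}[\bb_n,\Theta]$.

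Next I would carry out the explicit evaluation by reading off the coefficient of $b_{2k+1}$ in $\bb_n$ via the binomial theorem, pairing with the matching $b_{-(2k+1)}$ in $\Theta$, and using $[b_{2k+1},b_{-(2k+1)}]=(2k+1)/2$. The factors of $2k+1$ cancel neatly against the $1/(2k+1)$ built into $\Theta$, and the remaining binomial sum is recognized as the odd-degree part of $(\beta/2-\beta/2)^n-(-\beta)^n$ divided by $2$. For $n\ge1$ this collapses to $-\frac{(-\beta)^n}{2}$, and summing against $2p_n/n$ produces $[\Hb,\Theta]=-\sum_{n\ge1}\frac{p_n}{n}(-\beta)^n$ as required.

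The step requiring the most care is justifying the formal manipulations in $\mathrm{Map}(\mathcal{E},\mathcal{E}[[p]])$. Here it helps that $\Theta$ acts locally nilpotently on $\mathcal{E}$ by \eqref{eq:graded_action} (since every $\bra{w}$ sits in some $\mathcal{E}_s$ with $s\le 0$ and repeated application of $\Theta$ raises $s$ out of the nonpositive range), so $e^{\Theta}\bra{w}$ is a finite sum, while $e^{\Hb}$ is well-defined because each coefficient of a monomial in $p_1,p_2,\dots$ involves only finitely many factors of $\Hb$. Combined with the fact that $\bb_n$ for $n\ge 1$ is a finite polynomial in $b_1,b_3,\dots$, this reduces the commutator computation to a manifestly finite one and legitimizes the appeal to \eqref{eq:formula_commutative}.
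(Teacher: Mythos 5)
Your proposal is correct and follows essentially the same route as the paper: it reduces the identity to \eqref{eq:formula_commutative} by showing $[\Hb,\Theta]$ is the scalar $-\sum_{n\ge1}\frac{p_n}{n}(-\beta)^n$, computed by pairing the $b_{2k+1}$-coefficients of $\bb_n$ (which the paper packages as \eqref{eq:hojyo}) with the $b_{-(2k+1)}$ in $\Theta$ and using the oddness of $(X-\tfrac{\beta}{2})^n-(-X-\tfrac{\beta}{2})^n$ to evaluate the resulting sum at $X=\beta/2$. The added remarks on well-definedness (local nilpotence of $\Theta$ on $\mathcal{E}$ and the $p$-grading for $e^{\Hb}$) are accurate but not a departure from the paper's argument.
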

\begin{proof}
For any polynomial $f(X)$ in $X$, let $\mathrm{Coeff}[f(X);X^m]$ denote the coefficient of $X^m$ in $f(X)$.
From \eqref{eq:b_and_phi}, it follows that
\begin{equation}\label{eq:hojyo}
[\bb_m,b_{-n}]=
\frac{n}{4}
\mathord{\mathrm{Coeff}}\left[
\left(X-\frac{\beta}{2}\right)^m-
\left(-X-\frac{\beta}{2}\right)^m;X^n
\right]\qquad \mbox{for}\quad m,n>0.
\end{equation}
Thus we have
\begin{align*}
&[\Hb,\Theta]\\
&=
\sum_{m=1}^\infty
\sum_{n=1,3,5,\dots}\left[
2\frac{p_m}{m}\bb_m,2\left(\frac{\beta}{2}\right)^n\frac{b_{-n}}{n}
\right]\\
&=
\sum_{m=1}^\infty
\sum_{n=1,3,5,\dots}
\frac{p_m}{m}\left(\frac{\beta}{2}\right)^n
\mathord{\mathrm{Coeff}}\left[
\left(X-\frac{\beta}{2}\right)^m-
\left(-X-\frac{\beta}{2}\right)^m;X^n
\right]\quad (\mbox{Eq.\,\eqref{eq:hojyo}} )\\
&=
\sum_{m=1}^\infty
\sum_{n=0}^\infty
\frac{p_m}{m}\left(\frac{\beta}{2}\right)^n
\mathord{\mathrm{Coeff}}\left[
\left(X-\frac{\beta}{2}\right)^m-
\left(-X-\frac{\beta}{2}\right)^m;X^n
\right]\\
&\hspace{15.5em}
(\hbox{\mbox{$(X-\tfrac{\beta}{2})^m-
(-X-\tfrac{\beta}{2})^m$ is an odd function}})\\
&=
\sum_{m=1}^\infty
\frac{p_m}{m}
\left\{
\left(\frac{\beta}{2}-\frac{\beta}{2}\right)^m-
\left(-\frac{\beta}{2}-\frac{\beta}{2}\right)^m
\right\}=-
\sum_{m=1}^\infty
\frac{p_m}{m}(-\beta)^m.
\end{align*}
Therefore, the lemma follows from \eqref{eq:formula_commutative}.
\end{proof}

\begin{proposition}\label{prop:eTheta_action_on_phi(beta)}
We have
\[
e^{\Theta}\phi^{(\beta)}(z)=(
1+\beta z^{-1}
)\cdot \phi^{(\beta)}(z)e^{\Theta}\quad\mbox{in}\quad
\mathrm{Map}(\mathcal{E},\mathcal{E}((z^{-1}))),
\]
where $(1+\beta z^{-1})^{-1}$ is expanded as $(1+\beta z^{-1})^{-1}=1-\beta z^{-1}+\beta^2 z^{-2}-\cdots$.
\end{proposition}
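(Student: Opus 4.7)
The plan is to adapt the exponentiation strategy used in Lemma 4.5: reduce the claim to computing the commutator $[\Theta,\bphi(z)]$, show that this commutator is a \emph{scalar} multiple of $\bphi(z)$ itself, and then exponentiate using the elementary fact that $[A,X]=cX$ with $c$ central implies $e^{A}X=e^{c}\,X\,e^{A}$. First I would invoke Proposition \ref{prop:b_and_phi(beta)} term by term: for every odd $n\geq 1$,
\[
[b_{-n},\bphi(z)]=\bigl(z+\tfrac{\beta}{2}\bigr)^{-n}\bphi(z)\quad \text{in } \mathcal{E}((z^{-1})),
\]
where $(z+\beta/2)^{-n}$ is expanded as a Laurent series in $z^{-1}$. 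Since $\Theta=\sum_{n\text{ odd}\geq 1}\tfrac{2(\beta/2)^{n}}{n}b_{-n}$ and each scalar prefactor pulls past $\bphi(z)$, this gives
\[
[\Theta,\bphi(z)]=\left(\sum_{n\text{ odd}\geq 1}\frac{2}{n}\!\left(\frac{\beta/2}{z+\beta/2}\right)^{\!n}\right)\bphi(z).
\]

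Second, I would identify the scalar factor explicitly. Using the classical expansion $\sum_{n\text{ odd}\geq 1}\tfrac{2t^{n}}{n}=\log\tfrac{1+t}{1-t}$ with $t=\beta/(2z+\beta)$, a direct simplification yields $\tfrac{1+t}{1-t}=\tfrac{z+\beta}{z}=1+\beta z^{-1}$, so
\[
[\Theta,\bphi(z)]=\log(1+\beta z^{-1})\cdot \bphi(z),
\]
with $\log(1+\beta z^{-1})\in k((z^{-1}))$ a pure scalar (operator-free) series. Because this scalar is central, induction gives $\Theta^{k}\bphi(z)=\bphi(z)\bigl(\Theta+\log(1+\beta z^{-1})\bigr)^{k}$, and dividing by $k!$ and summing over $k\geq 0$ produces
\[
e^{\Theta}\bphi(z)=\bphi(z)\,e^{\Theta+\log(1+\beta z^{-1})}=(1+\beta z^{-1})\,\bphi(z)\,e^{\Theta},
\]
which is the desired identity. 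Note that the scalar $(1+\beta z^{-1})^{-1}=1-\beta z^{-1}+\beta^{2}z^{-2}-\cdots$ lives in $k((z^{-1}))$ consistently with the stated conventions.

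The main technical point to verify is that every formal manipulation genuinely takes place inside $\mathrm{Map}(\mathcal{E},\mathcal{E}((z^{-1})))$. For a homogeneous $\bra{w}\in\mathcal{E}_{s}$ with $s\leq 0$, only finitely many terms of $\Theta$ act nontrivially (namely those with $n\leq -s$, since $\bra{w}b_{-n}\in\mathcal{E}_{s+n}$ vanishes once $s+n>0$), so interchanging the sum over $n$ with the commutator is legitimate; and the expansions of $(z+\beta/2)^{-n}$ and $\log(1+\beta z^{-1})$ in ascending powers of $z^{-1}$ both respect the $\mathcal{E}((z^{-1}))$-convention. I expect the principal obstacle to be this bookkeeping across different formal-series rings rather than any new algebraic difficulty; once the commutator has been identified as a central scalar multiple of $\bphi(z)$, the exponentiation is automatic.
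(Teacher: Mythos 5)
Your proposal is correct and follows essentially the same route as the paper: both reduce to computing $[\Theta,\phi^{(\beta)}(z)]$ term by term via Proposition \ref{prop:b_and_phi(beta)}, identify the resulting central scalar as $\log\frac{1+t}{1-t}$ with $t=\frac{\beta}{2}\bigl(z+\frac{\beta}{2}\bigr)^{-1}=\frac{\beta}{2z+\beta}$ (which simplifies to $\log(1+\beta z^{-1})$), and then exponentiate. Your added bookkeeping that only the finitely many $b_{-n}$ with $n\leq -s$ act nontrivially on $\mathcal{E}_s$ is a nice touch that the paper leaves implicit when asserting $e^{\Theta}\colon\mathcal{E}\to\mathcal{E}$ is well defined.
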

\begin{proof}
Write $\hat{z}=\left(z+\frac{\beta}{2}\right)^{-1}=\frac{z^{-1}}{1+\frac{\beta}{2}z^{-1}}$.
By using the formula
\[
e^AXe^{-A}=X+[A,X]+\frac{1}{2!}[A,[A,X]]+\frac{1}{3!}[A,[A,[A,X]]]+\cdots
\]
and Proposition \ref{prop:b_and_phi(beta)}, 
we obtain
\begin{align*}
e^{\Theta}\phi^{(\beta)}(z)e^{-\Theta}
&=
\exp\left\{
2\left(\frac{\beta}{2} \hat{z}+\frac{\beta^3}{2^3}\frac{\hat{z}^{3}}{3}+
\frac{\beta^5}{2^5}\frac{\hat{z}^{5}}{5}+\cdots\right)
\right\}\phi^{(\beta)}(z)
=
\frac
{1+\frac{\beta}{2}\hat{z}}
{1-\frac{\beta}{2}\hat{z}}
\phi^{(\beta)}(z)\\
&=(1+\beta z^{-1})\phi^{(\beta)}(z).
\end{align*}
Recall the Taylor expansion 
$
\log 
(
\frac{1+t}{1-t}
)=
2
(
t+\frac{t^3}{3}+\frac{t^5}{5}+\cdots
)
$
.
\end{proof}

\begin{corollary}\label{cor:e_Theta_bphi}
We have
\begin{align*}
&e^{\Theta}\phi^{(\beta)}_ne^{-\Theta}=\phi^{(\beta)}_n+\beta \phi^{(\beta)}_{n+1},\\
&e^{-\Theta}\phi^{(\beta)}_ne^{\Theta}=
\phi^{(\beta)}_n-\beta \phi^{(\beta)}_{n+1}+\beta^2 \phi^{(\beta)}_{n+2}-\cdots.
\end{align*}
\end{corollary}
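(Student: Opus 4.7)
Both identities follow by extracting the coefficient of $z^n$ from Proposition \ref{prop:eTheta_action_on_phi(beta)}, so essentially no new content is required.

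Since $e^{\Theta}$ is invertible on $\mathcal{E}$ with inverse $e^{-\Theta}$ (both being well-defined $k$-linear endomorphisms by the same argument given for $e^{\Theta}$), I would first rewrite the conclusion of Proposition \ref{prop:eTheta_action_on_phi(beta)} as
\[
e^{\Theta}\phi^{(\beta)}(z)e^{-\Theta}=(1+\beta z^{-1})\,\phi^{(\beta)}(z)\qquad \mbox{in}\ \mathrm{Map}(\mathcal{E},\mathcal{E}((z^{-1}))),
\]
by multiplying the displayed identity on the right by $e^{-\Theta}$. Expanding $\phi^{(\beta)}(z)=\sum_{n\in\ZZ}\phi^{(\beta)}_n z^n$ and collecting the right-hand side as $\sum_n(\phi^{(\beta)}_n+\beta\phi^{(\beta)}_{n+1})z^n$, the first identity is obtained by comparing coefficients of $z^n$.

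For the second identity, I would use that $(1+\beta z^{-1})$ is a unit in $k((z^{-1}))$ with inverse $1-\beta z^{-1}+\beta^2 z^{-2}-\cdots$; multiplying the original formula of Proposition \ref{prop:eTheta_action_on_phi(beta)} on the left by $e^{-\Theta}$ and then by this inverse scalar series (which commutes with the operator factors) yields
\[
e^{-\Theta}\phi^{(\beta)}(z)e^{\Theta}=(1+\beta z^{-1})^{-1}\phi^{(\beta)}(z),
\]
and equating coefficients of $z^n$ on both sides gives the claimed expansion.

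There is no substantial obstacle, but one should check that the formally infinite operator $\phi^{(\beta)}_n-\beta\phi^{(\beta)}_{n+1}+\beta^2\phi^{(\beta)}_{n+2}-\cdots$ defines a bona fide endomorphism of $\mathcal{E}$. This follows from the grading Lemma \ref{lemma:gradeing_of_bphi_on_E}: on any $\bra{w}\in\mathcal{E}_s$ the term $\bra{w}\phi^{(\beta)}_{n+k}$ lies in $\mathcal{E}_{s+n+k}$ and vanishes once $s+n+k>0$, so only finitely many summands contribute on each fixed vector, justifying the termwise reading of the coefficient of $z^n$.
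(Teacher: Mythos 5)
Your proposal is correct and is exactly the argument the paper intends: the corollary is stated without proof as an immediate coefficient extraction from Proposition \ref{prop:eTheta_action_on_phi(beta)}, using the expansion $(1+\beta z^{-1})^{-1}=1-\beta z^{-1}+\beta^2 z^{-2}-\cdots$ for the second identity. Your extra remark that the infinite sum $\sum_{k\geq 0}(-\beta)^k\phi^{(\beta)}_{n+k}$ acts well on each $\bra{w}\in\mathcal{E}_s$ by the grading Lemma \ref{lemma:gradeing_of_bphi_on_E} is a worthwhile sanity check that the paper leaves implicit.
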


\section{The space of $K$-theoretic $Q$-cancellation property}\label{sec:review_Q}

In this section, we deal with the $K$-theoretic $Q$-cancellation property, which was first introduced by Ikeda-Naruse~\cite{IKEDA201322}.
This condition is considered as a $\beta$-deformed version of the original $Q$-cancellation property (see \eqref{eq:crtGamma}) in the theory of Schur's $Q$-functions.

\subsection{Schur's $Q$-function and bilinear form}\label{sec:Q-bilinear}

We first give a brief review of basic facts about Schur's $Q$-functions according to the textbook~\cite[\S III-8]{macdonald1998symmetric}.

Let $\Lambda_{\QQ}$ be the algebra of symmetric functions in $x_1,x_2,x_3,\dots$ over $\QQ$.
We consider the series of symmetric functions $q_n=q_n(x)$ $(n=1,2,3,\dots)$ defined by
\begin{equation}\label{eq:generating_Q}
\prod_{i=1}^{\infty}\frac{1+x_iz}{1-x_iz}=\sum_{n=0}^{\infty}q_n(x)z^n.
\end{equation}
Let $\Gamma_{\QQ}:=\QQ[q_1,q_2,q_3,\dots]$ be the subalgebra of $\Lambda_{\QQ}$ generated by all $q_n$.

\begin{proposition}[{\cite[\S III-8, (8.5)]{macdonald1998symmetric}}]\label{prop:Q-property}
We have
\[
\Gamma_{\QQ}=\QQ[q_1,q_3,q_5,\dots]=\QQ[p_1,p_3,p_5,\dots],
\]
where $p_i=p_i(x)=x_1^i+x_2^i+\cdots$ is the $i$-th power sum.
The $q_1,q_3,q_5,\dots$ are algebraically independent over $\QQ$.
\end{proposition}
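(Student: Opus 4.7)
\medskip

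\noindent\textbf{Proof plan.} The strategy is to pass to the power-sum side via the logarithm of the generating function and then exploit a functional equation of the generating series.

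First I would take the logarithm of the defining product
\[
\prod_{i\geq 1}\frac{1+x_iz}{1-x_iz}
=\exp\!\left(\sum_{i\geq 1}\bigl[\log(1+x_iz)-\log(1-x_iz)\bigr]\right).
\]
A direct expansion collapses the even-degree contributions and leaves
\[
\sum_{n\geq 0}q_n(x)\,z^n
=\exp\!\left(2\sum_{k=1,3,5,\dots}\frac{p_k(x)}{k}\,z^k\right).
\]
Reading off coefficients shows immediately that every $q_n$ lies in $\QQ[p_1,p_3,p_5,\dots]$, so $\Gamma_{\QQ}\subseteq \QQ[p_1,p_3,p_5,\dots]$. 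Moreover, for each odd $n$ the expansion has the triangular shape $q_n=\frac{2}{n}p_n+(\text{polynomial in } p_1,p_3,\dots,p_{n-2})$, and this triangular system can be inverted over $\QQ$ to express each odd $p_n$ as a polynomial in $q_1,q_3,\dots,q_n$. Hence $\QQ[p_1,p_3,p_5,\dots]\subseteq \QQ[q_1,q_3,q_5,\dots]$.

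Next I would show that the even-indexed $q_{2m}$ are superfluous by using the functional equation
\[
\left(\sum_{n\geq 0}q_n z^n\right)\left(\sum_{n\geq 0}q_n(-z)^n\right)=1,
\]
which follows at once from $\prod_i\tfrac{(1+x_iz)(1-x_iz)}{(1-x_iz)(1+x_iz)}=1$. Extracting the coefficient of $z^{2m}$ yields a relation of the form
\[
2q_{2m}+\sum_{\substack{0<k<2m\\k\ \mathrm{odd}}}(\text{sign})\,q_k q_{2m-k}+(\text{term in lower }q_j)=0,
\]
which recursively expresses $q_{2m}$ as a polynomial in $q_1,q_2,\dots,q_{2m-1}$; together with the previous step this gives $\QQ[q_1,q_2,q_3,\dots]=\QQ[q_1,q_3,q_5,\dots]$.

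Finally, for algebraic independence I would invoke the classical fact from \cite[\S I.2]{macdonald1998symmetric} that the full family $\{p_1,p_2,p_3,\dots\}$ is algebraically independent in $\Lambda_\QQ$; in particular the odd subfamily $\{p_1,p_3,p_5,\dots\}$ is algebraically independent. Any nontrivial algebraic relation among $q_1,q_3,q_5,\dots$ would, after substituting the triangular expressions $q_n=\frac{2}{n}p_n+(\text{lower})$, produce a nontrivial polynomial relation among $p_1,p_3,p_5,\dots$, a contradiction.

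The main (and only substantive) obstacle is verifying that the recursion coming from $Q(z)Q(-z)=1$ really does solve for $q_{2m}$ with an invertible leading coefficient in characteristic zero; this is why working over $\QQ$ (rather than $\ZZ$) is essential. Everything else is bookkeeping on generating functions.
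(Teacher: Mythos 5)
Your argument is correct and is essentially the standard one from Macdonald \S III.8 (the generating identity $Q(z)=\exp\bigl(2\sum_{r\ \mathrm{odd}}p_rz^r/r\bigr)$ to get $\Gamma_{\QQ}\subseteq\QQ[p_1,p_3,\dots]$ with the triangular inversion, plus $Q(z)Q(-z)=1$ to eliminate the even $q_{2m}$); the paper itself gives no proof and simply cites that reference. The only cosmetic slip is that the coefficient-of-$z^{2m}$ relation runs over all $0<k<2m$, not just odd $k$, but since the coefficient of $q_{2m}$ is $2$ and all other terms involve lower-index $q_j$, your recursion goes through unchanged over $\QQ$.
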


From Proposition \ref{prop:Q-property}, it follows that
\begin{equation}\label{eq:crtGamma}
f(x_1,x_2,\dots)\in \Gamma_{\QQ} \iff 
f(t,-t,x_3,x_4,\dots)\mbox{ does not depend on $t$}
\end{equation}
for any symmetric function $f(x_1,x_2,\dots)\in \Lambda_{\QQ}$.

A partition $\lambda$ is said to be \textit{odd} if every its part is odd.
Let $p_\lambda=p_{\lambda_1}p_{\lambda_2}\cdots$ and $q_\lambda=q_{\lambda_1}q_{\lambda_2}\cdots$.

\begin{lemma}[{\cite[\S III-8, (8.6)]{macdonald1998symmetric}}]\label{lemma:basis_of_Gamma}
Each set listed below forms a $\QQ$-basis of $\Gamma_{\QQ}$.
\begin{enumerate}
\item $q_\lambda$ $(\lambda: \mathrm{strict} )$,
\item $q_\lambda$ $(\lambda: \mathrm{odd})$,
\item $p_\lambda$ $(\lambda: \mathrm{odd})$.
\end{enumerate}
\end{lemma}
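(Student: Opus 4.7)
The plan is to obtain parts (2) and (3) essentially for free from Proposition \ref{prop:Q-property}, and then to deduce (1) by combining a spanning argument with a dimension count.

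First, Proposition \ref{prop:Q-property} asserts that $\Gamma_{\QQ} = \QQ[q_1, q_3, q_5, \dots] = \QQ[p_1, p_3, p_5, \dots]$ and that $q_1, q_3, q_5, \dots$ are algebraically independent over $\QQ$; the classical algebraic independence of the full power-sum sequence $p_1, p_2, p_3, \dots$ in $\Lambda_{\QQ}$ then gives the analogous statement for $p_1, p_3, p_5, \dots$. Thus in both presentations $\Gamma_{\QQ}$ is a polynomial ring, so monomials in the chosen generators form a $\QQ$-basis. These monomials are precisely the $q_\lambda$ (resp. $p_\lambda$) with $\lambda$ odd, yielding (2) and (3). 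In particular the graded piece $\Gamma_{\QQ,n}$ has dimension equal to the number $\mathrm{odd}(n)$ of odd partitions of $n$.

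For (1), I would first derive a quadratic straightening relation for the $q_n$'s. Squaring the generating identity \eqref{eq:generating_Q} gives
\[
\left(\sum_{n\geq 0} q_n z^n\right)\left(\sum_{n\geq 0} q_n (-z)^n\right) = \prod_i \frac{(1+x_iz)(1-x_iz)}{(1-x_iz)(1+x_iz)} = 1.
\]
The coefficient of $z^{2m+1}$ vanishes automatically, while pairing $k \leftrightarrow 2m-k$ in the coefficient of $z^{2m}$ yields
\[
q_m^2 = 2(-1)^{m+1}\sum_{k=0}^{m-1}(-1)^k q_{2m-k}q_k,
\]
with the convention $q_0 = 1$. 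This rewrites any $q_m^2$ as a $\QQ$-combination of products $q_iq_j$ with $i \neq j$, together with the singleton $q_{2m}$ term $(k=0)$. Iterating this rewrite, every monomial $q_\mu$ reduces to a finite $\QQ$-linear combination of $q_\lambda$'s with strict $\lambda$, so $\{q_\lambda : \lambda \text{ strict}\}$ spans $\Gamma_{\QQ,n}$. By Euler's classical identity $|\{\lambda \vdash n : \lambda \text{ strict}\}| = \mathrm{odd}(n)$, which matches the dimension of $\Gamma_{\QQ,n}$ computed above, so a spanning set of the correct cardinality must be a basis, proving (1).

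The main obstacle is verifying that the iterative reduction in (1) actually terminates: each use of the $q_m^2$-relation can introduce a new and larger single part $q_{2m}$, so the naive ``largest-repeated-part'' measure does not decrease. One must exhibit a well-founded invariant on multisets of parts of a fixed total $n$ (for instance, a suitable lexicographic refinement combining the largest repeated part with the multiplicity profile, exploiting the fact that $q_{2m}$ contributes no new repeat unless $2m$ already appears in the complementary factor $q_\nu$) under which every application of the straightening rule strictly decreases the invariant. Once termination is in hand, the dimension count closes the argument.
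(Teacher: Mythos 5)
The paper itself offers no proof of this lemma---it is quoted from Macdonald \cite[\S III-8, (8.6)]{macdonald1998symmetric}---so there is nothing internal to compare against; your route is in fact the standard one from that reference. Parts (2) and (3) are handled correctly: Proposition \ref{prop:Q-property} exhibits $\Gamma_{\QQ}$ as a polynomial ring on the odd-indexed $q$'s and on the odd-indexed $p$'s (the latter being a subfamily of the algebraically independent power sums), and the monomials in a set of algebraically independent generators form a basis. The quadratic relation you derive from $\bigl(\sum_n q_nz^n\bigr)\bigl(\sum_n q_n(-z)^n\bigr)=1$ is also correct, including the sign.

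The genuine gap is the one you name yourself: you assert that iterating the straightening rule terminates but do not produce the required well-founded invariant, so part (1) is not actually proved as written. The gap is real but closes by a standard observation. When the rule is applied to a monomial $q_\mu$ with a repeated part $m$, it replaces the two parts $m,m$ of $\mu$ by $2m-k$ and $k$ with $0\le k\le m-1$, leaving the other parts untouched and preserving $|\mu|=n$; since $2m-k\ge m+1>m>k$, every partition $\nu$ occurring in the output strictly dominates $\mu$ (equivalently, precedes it in reverse lexicographic order: the first part in which the decreasingly sorted sequences differ is larger for $\nu$, because a part $2m-k>m$ has been inserted among the parts exceeding $m$). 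The set of partitions of $n$ is finite, so one argues by downward induction along the dominance order: the unique maximal partition $(n)$ is strict, and for non-strict $\mu$ every $\nu$ appearing in the rewriting is strictly larger and hence, by the induction hypothesis, already a $\QQ$-combination of $q_\lambda$ with $\lambda$ strict. This makes the ``new larger part $q_{2m}$'' phenomenon you worry about harmless: new coincidences among parts may appear, but only at partitions strictly higher in a finite poset. With termination established, your Euler-identity dimension count ($\sharp\{\lambda\vdash n:\lambda\ \mathrm{strict}\}=\sharp\{\lambda\vdash n:\lambda\ \mathrm{odd}\}=\dim_{\QQ}\Gamma_{\QQ,n}$) correctly upgrades the spanning family to a basis.
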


We consider the bilinear map $\langle 
\cdot,\cdot
\rangle
:\Gamma_{\QQ}\otimes_{\QQ} \Gamma_{\QQ} \to \QQ$ that is defined by
\begin{equation}
\langle
p_\lambda,p_\mu
\rangle
=2^{-\ell(\lambda)}z_\lambda \delta_{\lambda,\mu},\quad 
\end{equation}
for $\lambda,\mu$ odd, where
$z_\lambda=\prod_{i\geq 1}i^{m_i}\cdot m_i!$ and $m_i=m_i(\lambda)=\sharp\{p\,\vert\, \lambda_p=i\}$.
%
Because
\begin{align*}
\prod_{i,j}\frac{1+x_iy_j}{1-x_iy_j}
&=
\exp\left(
2\sum_{n=1,3,5,\dots}\frac{p_n(x)p_n(y)}{n}
\right)
\\
&=
\prod_{n=1,3,5,\dots}
\sum_{k=0}^\infty
\frac{2^k}{k!n^k}p_n(x)^kp_n(y)^k\\
&=\sum_{\lambda : \mathrm{odd}}2^{\ell(\lambda)}z_\lambda^{-1}
p_\lambda(x)p_\lambda(y),
\end{align*}
we have 
\begin{equation}\label{eq:fundamental_dual}
\langle u_\lambda,v_\mu\rangle=\delta_{\lambda,\mu}
\iff
\prod_{i,j}\frac{1+x_iy_j}{1-x_iy_j}
=\sum_{\lambda}u_\lambda(x)v_\lambda(y)
\end{equation}
for arbitrary $\QQ$-basis $\{u_\lambda\}_\lambda$, $\{v_\lambda\}_\lambda$ of $\Gamma_{\QQ}$.

\begin{theorem}[Schur's $Q$-function]
There exists a unique family of symmetric functions $Q_\lambda(x)$ $(\lambda:\mathrm{strict})$ such that
\begin{enumerate}
\item $Q_\lambda$ forms a $\QQ$-basis of $\Gamma$, 
\item $\langle Q_\lambda,Q_\mu\rangle=2^{\ell(\lambda)} \delta_{\lambda,\mu}$,
\item $Q_\lambda$ is expanded as 
\[
Q_\lambda=q_\lambda+\sum_{\mu\supsetneq \lambda:\mathrm{strict}} a_{\lambda,\mu}q_\mu,\qquad
a_{\lambda,\mu}\in \ZZ.
\]
\end{enumerate}
The symmetric function $Q_\lambda$ is called \textit{Schur's $Q$-function}.
\end{theorem}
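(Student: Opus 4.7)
The plan is to build $Q_\lambda$ by Gram--Schmidt orthogonalization of the basis $\{q_\lambda\}_{\lambda\text{ strict}}$ of $\Gamma_{\QQ}$ (Lemma \ref{lemma:basis_of_Gamma}) with respect to the bilinear form $\langle\cdot,\cdot\rangle$, using a total order $\prec$ on strict partitions that refines the inclusion order, so that $\mu\supsetneq\lambda \Rightarrow \lambda\prec\mu$ (for instance, order by $|\lambda|$ first and break ties lexicographically). Enumerating strict partitions as $\lambda^{(0)}\prec\lambda^{(1)}\prec\cdots$, I would inductively set
\[
Q_{\lambda^{(k)}} = q_{\lambda^{(k)}} - \sum_{i<k} \frac{\langle q_{\lambda^{(k)}}, Q_{\lambda^{(i)}}\rangle}{\langle Q_{\lambda^{(i)}}, Q_{\lambda^{(i)}}\rangle}\,Q_{\lambda^{(i)}}.
\]
By induction the $Q_\lambda$ form a $\QQ$-basis of $\Gamma_{\QQ}$ (property (1)), are pairwise orthogonal (the ``off-diagonal'' half of (2)), and admit a triangular expansion of the shape in (3) --- but with rational coefficients in place of integers.

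To pin down both the diagonal values $\langle Q_\lambda, Q_\lambda\rangle = 2^{\ell(\lambda)}$ and the integrality of $a_{\lambda,\mu}$, I would switch to the generating-function viewpoint. The computation in \S\ref{sec:Q-bilinear} together with \eqref{eq:fundamental_dual} characterizes dual bases of $\Gamma_{\QQ}$ through the kernel $\prod_{i,j}\frac{1+x_iy_j}{1-x_iy_j}$. Setting $P_\lambda := 2^{-\ell(\lambda)}Q_\lambda$, property (2) is equivalent to
\[
\prod_{i,j}\frac{1+x_iy_j}{1-x_iy_j} \;=\; \sum_{\lambda\text{ strict}} Q_\lambda(x)\,P_\lambda(y).
\]
I would verify this by a specialization argument: plugging $y=(t,-t,0,0,\dots)$ into the kernel makes it collapse (by \eqref{eq:crtGamma} together with \eqref{eq:generating_Q}), and comparing with the triangular expansion (3) of each $Q_\lambda(x)$ extracts both the exact diagonal norm $2^{\ell(\lambda)}$ and, successively, that each Gram--Schmidt coefficient must be an integer.

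For uniqueness, suppose both $\{Q_\lambda\}$ and $\{Q'_\lambda\}$ satisfy (1)--(3). Then $D_\lambda := Q_\lambda - Q'_\lambda$ lies in $\mathrm{span}_{\QQ}\{q_\mu : \mu\supsetneq\lambda\text{ strict}\}$, so $D_\lambda = \sum_{\mu\supsetneq\lambda} c_\mu\, q_\mu$ for some $c_\mu\in\QQ$. For each strict $\nu\supsetneq\lambda$ the element $Q'_\nu$ is orthogonal to all $Q'_\tau$ with $\tau\neq\nu$ and has $q_\nu$ as its leading term in the triangular expansion; pairing $D_\lambda$ against $Q'_\nu$ and using (2) for both families, a downward induction on $\nu$ (starting from the $\prec$-maximal index with $c_\nu\neq 0$) forces every $c_\nu=0$, so $D_\lambda=0$.

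The main obstacle is the integrality claim in (3): Gram--Schmidt a priori only outputs rational coefficients, so extracting $a_{\lambda,\mu}\in\ZZ$ requires additional combinatorial input. The cleanest way I see is to compute $\langle q_\lambda, q_\mu\rangle$ explicitly (using the power-sum expansion of $q_n$ and the definition of $\langle\cdot,\cdot\rangle$ on $p_\lambda$); this Gram matrix turns out to be unitriangular over $\ZZ$ in a suitable basis, which combined with the generating-function identity above forces $a_{\lambda,\mu}\in\ZZ$. Alternatively, as in Macdonald \cite{macdonald1998symmetric}, one can define $Q_\lambda$ through a Pfaffian formula and verify (1)--(3) directly; the uniqueness argument above then shows the two constructions coincide.
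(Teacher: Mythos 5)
The paper offers no proof of this theorem --- it is quoted verbatim from \cite[\S III-8]{macdonald1998symmetric} as part of the review section --- so there is no in-paper argument to compare against; I am judging your proposal on its own. Your overall architecture (Gram--Schmidt for existence, the Cauchy kernel \eqref{eq:fundamental_dual} for the duality, triangularity plus orthogonality for uniqueness) is the standard one, and the uniqueness paragraph is essentially sound. The existence half, however, has a real gap in the very first step: the choice of order. Since $\langle p_\lambda,p_\mu\rangle=0$ for $\lambda\neq\mu$, the form is graded, so your Gram--Schmidt takes place entirely within a fixed weight $|\lambda|$; but two \emph{distinct} strict partitions of the same weight are never comparable under $\subsetneq$, so ``refine the inclusion order'' places no constraint at all on the order that actually matters, and everything is decided by your tie-break. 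The tie-break is not innocuous: with $(2,1)\prec(3)$ (lexicographic, as you suggest) one gets $Q_{(2,1)}=q_2q_1$ and $Q_{(3)}=q_3-\tfrac13\,q_2q_1$, whose norms are $12$ and $\tfrac23$ rather than $4$ and $2$; this family violates (2), violates integrality, and is not Schur's $Q$. The correct recipe processes the dominance-\emph{larger} partition first, producing $Q_{(2,1)}=q_2q_1-2q_3$. Correspondingly, condition (3) as printed is itself a slip: there is no strict $\mu\supsetneq\lambda$ with $|\mu|=|\lambda|$, and $Q_\lambda$ is homogeneous, so the sum must run over strict $\mu$ of the same weight with $\mu>\lambda$ in the dominance (or reverse lexicographic) order. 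Your argument needs to fix the order and the triangularity statement before anything else.

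Second, the two genuinely hard points are asserted rather than proved. The specialization $y=(t,-t,0,\dots)$ collapses the kernel $\prod_{i,j}\frac{1+x_iy_j}{1-x_iy_j}$ to $1$, and every $P_\lambda\in\Gamma_{\QQ}$ satisfies $P_\lambda(t,-t,0,\dots)=P_\lambda(0,0,\dots)$ by \eqref{eq:crtGamma}, so the identity you obtain is $1=1$: it extracts neither the norm $\langle Q_\lambda,Q_\lambda\rangle=2^{\ell(\lambda)}$ nor the integrality of the $a_{\lambda,\mu}$. Moreover the Gram matrix $(\langle q_\lambda,q_\mu\rangle)$ is not unitriangular over $\ZZ$: already in weight $3$, in the basis $q_3,\ q_2q_1$ it equals $\left(\begin{smallmatrix}2&4\\4&12\end{smallmatrix}\right)$. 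Both points require genuine input --- e.g.\ the explicit computation $\langle Q_{(a,b)},Q_{(a,b)}\rangle=4$ for two-row shapes combined with the Pfaffian recursion, or the realization of $Q_\lambda$ as a Hall--Littlewood function at $t=-1$ --- which is exactly how \cite[\S III-8]{macdonald1998symmetric} proceeds. Your closing remark that one could instead define $Q_\lambda$ by the Pfaffian formula and verify (1)--(3) directly is the honest route, but as written it concedes that the proposal itself does not close these gaps.
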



It is known~\cite{date1983method,jimbo1983solitons} that, under the identification $p_n\mapsto p_n(x)$, Schur's $Q$-function is realized as a vacuum expectation value of neutral-fermionic operators.
For a strict partition $\lambda=(\lambda_1>\dots>\lambda_r>0)$, we let 
\begin{equation}\label{eq:ket_lambda}
\ket{\lambda}:=
\begin{cases}
\phi_{\lambda_1}\phi_{\lambda_2}\dots \phi_{\lambda_r}\ket{0}, & r\mbox{ : even},\\
\phi_{\lambda_1}\phi_{\lambda_2}\dots \phi_{\lambda_r}\phi_{0}\ket{0}, & r\mbox{ : odd}.
\end{cases}
\end{equation}
Then we have
\begin{equation}\label{eq:classical_result}
Q_{\lambda}(x)=\bra{0}e^{\mathcal{H}}\ket{\lambda}.
\end{equation}

\subsection{$K$-theoretic $Q$-cancellation property}\label{sec:K_Gamma}

Write $\Gamma:=\Gamma_{\QQ}\otimes_{\QQ} k$, where $k=\QQ(\beta)$.
Let $\widehat{\Lambda}$ be the completed ring\footnote{\label{footnote:Lambda}
The $\widehat{\Lambda}$ consists of all Schur series
\[
c_1s_{\lambda_1}(x)+c_2s_{\lambda_2}(x)+c_3s_{\lambda_3}(x)+\cdots,\qquad c_i\in k,
\]
in which $s_\lambda(x)$ is regarded as a monomial of ``degree $\ell(\lambda)$''.
For $f(x)=f(x_1,x_2,\dots)\in \widehat{\Lambda}$, we have
\[
\mathrm{deg}\,f(x)>N\iff f(x_1,\dots,x_N,0,0,\dots)=0.
\]
For example, the infinite series $1+e_1+e_2+\cdots$ is contained in $\widehat{\Lambda}$, while $1+h_1+h_2+\cdots$ is not.
A detailed explanation can be found in~\cite{buch2002littlewood}.
} of symmetric functions over $k$.
This complete ring has a linear topology in which the family
\[
\{U_n\}_{n=1,2,\dots},\qquad
U_n=\{f\,\vert\,f(x_1,\dots,x_n,0,0,\dots)=0\}
\]
forms a basis of open neighborhoods of $0$.

Let $G\Gamma\subset \widehat{\Lambda}$ denote the $k$-subalgebra generated by all symmetric functions satisfying
\begin{equation}\label{eq:KQ_property}
f(t,\overline{t},x_3,x_4,\dots) \mbox{ does not depend on $t$, where } \overline{t}=\frac{-t}{1+\beta t}.
\end{equation}
This is seen as a $\beta$-deformed version of \eqref{eq:crtGamma}.
Eq.\ (\ref{eq:KQ_property}) is called the \textit{$K$-theoretic $Q$-cancellation property}.

Let $\widehat{\Gamma}$ be the $k$-subspace of $\widehat{\Lambda}$ generated by all symmetric functions satisfying the (original) $Q$-cancellation property \eqref{eq:crtGamma}.
Topologically, $\widehat{\Gamma}$ is the closure of $\Gamma\subset \widehat{\Lambda}$.
The following proposition provides a simple necessary and sufficient condition for an element of $\widehat{\Lambda}$ to be contained in $G\Gamma$.

\begin{proposition}\label{prop:Gamma_vs_GGamma}
The substitution map
\begin{equation}\label{eq:substitution}
x_i\mapsto \frac{x_i}{1+\frac{\beta}{2}x_i},\quad i=1,2,3,\dots
\end{equation}
induces an isomorphism $\widehat{\Gamma}\to G\Gamma$.
\end{proposition}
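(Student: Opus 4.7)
The plan is to exhibit an explicit two-sided inverse and reduce the argument to a single algebraic identity. The candidate inverse is the substitution $y_i \mapsto \frac{y_i}{1-\tfrac{\beta}{2}y_i}$, $i=1,2,3,\dots$. Since both substitutions have vanishing constant term (and leading coefficient $1$ in each variable), they send each open set $U_n$ into itself, so they define continuous $k$-algebra endomorphisms of $\widehat{\Lambda}$. A direct computation in $k[[x]]$ shows that the two substitutions are inverse to each other, hence they give mutually inverse topological automorphisms of $\widehat{\Lambda}$. It therefore suffices to check that \eqref{eq:substitution} carries $\widehat{\Gamma}$ into $G\Gamma$ and its inverse carries $G\Gamma$ into $\widehat{\Gamma}$.

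The computational core is the identity
\[
\frac{\overline{t}}{1+\tfrac{\beta}{2}\overline{t}} \;=\; -\,\frac{t}{1+\tfrac{\beta}{2}t}, \qquad \overline{t} \;=\; \frac{-t}{1+\beta t},
\]
which follows from a one-line simplification of the right-hand side. Let $f\in\widehat{\Gamma}$ and let $\widetilde{f}$ denote its image under \eqref{eq:substitution}. Writing $u := t/(1+\tfrac{\beta}{2}t)$ and $y_i := x_i/(1+\tfrac{\beta}{2}x_i)$ for $i\geq 3$, the identity above yields
\[
\widetilde{f}(t,\overline{t},x_3,x_4,\dots) \;=\; f(u,-u,y_3,y_4,\dots),
\]
which, by the (original) $Q$-cancellation property of $f$, is independent of $u$ and therefore independent of $t$. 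Hence $\widetilde{f}\in G\Gamma$. For the reverse direction, setting $s:=t/(1-\tfrac{\beta}{2}t)$, the analogous calculation gives $\overline{s}=-t/(1+\tfrac{\beta}{2}t)$, so if $g\in G\Gamma$ and $h$ denotes its image under the inverse substitution, then $h(t,-t,x_3,\dots)=g(s,\overline{s},y_3,\dots)$ is independent of $s$ by the $K$-theoretic $Q$-cancellation property, hence independent of $t$; thus $h\in\widehat{\Gamma}$.

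No serious obstacle is anticipated. The only places requiring any care are the pair of rational-function identities above and the verification that the two substitutions are genuine mutually inverse continuous endomorphisms of the completed ring $\widehat{\Lambda}$; both reduce to routine short computations. One could also remark that, since each substitution preserves the symmetric-function structure and the topology, continuity alone guarantees that the image of $\widehat{\Gamma}$ (the closure of $\Gamma$) lies in the closed subalgebra of $\widehat{\Lambda}$ cut out by the $K$-theoretic cancellation condition, making the argument above essentially optimal in its structure.
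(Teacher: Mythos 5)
Your proposal is correct and follows essentially the same route as the paper: both hinge on the identity $\overline{t}/(1+\tfrac{\beta}{2}\overline{t})=-t/(1+\tfrac{\beta}{2}t)$, which shows that $\widetilde{f}(t,\overline{t},x_3,\dots)=f(T,-T,y_3,\dots)$ with $T=t/(1+\tfrac{\beta}{2}t)$ and thereby converts the $K$-theoretic cancellation condition into the classical one. The paper states the resulting equivalence $f\in\widehat{\Gamma}\iff g\in G\Gamma$ in one line, while you additionally spell out the explicit inverse substitution and the continuity checks on $\widehat{\Lambda}$.
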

\begin{proof}
Let $y_i=\frac{x_i}{1+\frac{\beta}{2}x_i}$.
For any symmetric function $f\in \widehat{\Lambda}$, set $g(x_1,x_2,\dots):=f(y_1,y_2,\cdots)$.
Then we have
$
g(t,\overline{t},x_3,x_4,\dots)=f(T,-T,y_3,y_4,\dots)
$,
where $T=\frac{t}{1+\frac{\beta}{2}t}$.
This implies $f\in \widehat{\Gamma}\iff g \in G\Gamma$.
\end{proof}

The substitution map \eqref{eq:substitution} is also come from arguments on $\beta$-deformed neutral fermions.
Let 
\[
\Hb_-=2\sum_{n=1}^\infty\frac{p_n(x)}{n}\bb_{-n}
\]
be a negative counterpart of $\Hb$.

\begin{lemma}\label{lemma:Hb_pm}
Under the identification $p_n\mapsto p_n(x)$, we have
\begin{equation}\label{eq:Hb_to_H} 
\Hb
=\sum_{n=1,3,5,\dots}\frac{p_n(\frac{x}{1+\frac{\beta}{2}x})}{n}b_n,
\quad
\Hb_-
=\sum_{n=1,3,5,\dots}\frac{p_n(x+\frac{\beta}{2})-p_n(\frac{\beta}{2})}{n}b_{-n}.
\end{equation}
\end{lemma}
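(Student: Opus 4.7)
The plan is to reduce both identities to formal power series manipulations using the logarithmic generating function
\[
\sum_{n\geq 1}\frac{p_n(x)}{n}\,t^{n}= -\sum_i \log(1-x_it),
\]
combined with the defining formula \eqref{eq:def_of_bb} for $\bb_m$. Since $\Hb = 2\sum_{n\geq 1}\frac{p_n}{n}\bb_n$, expanding the definition of $\bb_n$ under $p_n\mapsto p_n(x)$ yields
\[
\Hb = \sum_{n\geq 1}\frac{p_n(x)}{n}\bigl\{(X-\tfrac{\beta}{2})^n-(-X-\tfrac{\beta}{2})^n\bigr\}\Big|_{X^k\mapsto b_k},
\]
and similarly for $\Hb_-$ with negative powers in $X$. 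The computation is then purely a matter of evaluating these two generating sums.

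For the first identity, I would substitute $t = X-\beta/2$ and $t = -X-\beta/2$ into the log generating function and subtract. The resulting sum telescopes to $\sum_i\log\frac{1+x_i(X+\beta/2)}{1-x_i(X-\beta/2)}$, and the key algebraic step is to factor $(1+x_i\beta/2)$ out of both numerator and denominator to recognize this as $\sum_i\log\frac{1+y_iX}{1-y_iX}$ with $y_i = x_i/(1+\frac{\beta}{2}x_i)$. Applying the classical identity $\log\frac{1+u}{1-u}=2\sum_{n\text{ odd}}u^n/n$ then recovers exactly $\sum_{n\text{ odd}}\frac{p_n(y)}{n}X^n$ (up to the constant that matches the normalization of $\Hb$), and substituting $X^k\mapsto b_k$ finishes the first half.

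For the second identity, I would apply the same trick but with $t = (X-\beta/2)^{-1}$ and $t = (-X-\beta/2)^{-1}$, producing
\[
\sum_i\log\frac{(X+\beta/2+x_i)(X-\beta/2)}{(X+\beta/2)(X-\beta/2-x_i)}.
\]
Unlike the first case, this does not collapse to a single $\log\frac{1+u}{1-u}$; the plan is instead to split it into two pieces $\log\frac{X-\beta/2}{X-\beta/2-x_i}$ and $\log\frac{X+\beta/2+x_i}{X+\beta/2}$. Each piece matches, after expansion in $X^{-1}$, the generating function $\sum_{n}\frac{(x_i+\beta/2)^n-(\beta/2)^n}{n}X^{-n}$ evaluated at $X$ and at $-X$ respectively. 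Summing the two evaluations extracts exactly the odd-$n$ part (with a factor of $2$), and substituting $X^k\mapsto b_k$ produces the claimed expression in terms of $p_n(x+\beta/2)-p_n(\beta/2)$.

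The main subtlety that I would spend care on is bookkeeping rather than ideas: the expansions must be interpreted in the appropriate rings ($k[[X]]$ versus $k((X^{-1}))$) coming from the two cases $m>0$ and $m<0$ in the definition of $\bb_m$, and the ``divergent'' symbol $p_n(\beta/2)$ only makes sense inside the difference $p_n(x+\beta/2)-p_n(\beta/2) = \sum_i[(x_i+\beta/2)^n - (\beta/2)^n]$, which is the canonical $\sum_i$-renormalization forced by the splitting in the previous paragraph. Once the formal framework is set up, both identities reduce to a one-line application of the logarithmic generating function.
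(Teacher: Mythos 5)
Your proof is correct, but it is organized differently from the paper's. The paper proves the lemma by brute force on coefficients: it first expands $\bb_{\pm m}$ explicitly as $\sum_{n:\mathrm{odd}}(-\tfrac{\beta}{2})^{\pm(m-n)}\binom{\pm m}{\pm(n- m)}b_{\pm n}$ (up to indexing), substitutes into $\Hb$ and $\Hb_-$, interchanges the order of summation, and then uses the binomial identity $\frac{1}{m}\binom{m}{n}(-1)^{m-n}=\frac{1}{n}\binom{-n}{m-n}$ to resum the inner series into $p_n(\frac{x}{1+\frac{\beta}{2}x})$ resp.\ $p_n(x+\frac{\beta}{2})-p_n(\frac{\beta}{2})$. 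You instead package the whole computation through the logarithmic generating function $\sum_m \frac{p_m(x)}{m}t^m=-\sum_i\log(1-x_it)$ evaluated at $t=\pm X-\frac{\beta}{2}$ (resp.\ its inverse), which collapses both identities to one factorization of a logarithm; this is shorter and makes it conceptually clear \emph{why} the substitution $x\mapsto \frac{x}{1+\frac{\beta}{2}x}$ appears on the positive side and the shift $x\mapsto x+\frac{\beta}{2}$ (renormalized by $-p_n(\frac{\beta}{2})$) on the negative side, at the cost of having to justify the formal substitutions in $k[[X]]$ versus $k((X^{-1}))$, which you correctly flag. Two minor bookkeeping points: (i) in the second identity your two pieces are $G(X)$ and $-G(-X)$ for $G(X)=\sum_n\frac{(x_i+\beta/2)^n-(\beta/2)^n}{n}X^{-n}$, so what you form is the \emph{difference} $G(X)-G(-X)$ --- which is indeed the odd-part extraction with a factor $2$, so the conclusion stands, but ``summing the two evaluations'' should be read in that sense; (ii) both your computation and the paper's proof produce the overall factor $2\sum_{n:\mathrm{odd}}$, matching the normalization $\Hb=2\sum_n\frac{p_n}{n}\bb_n$, whereas the displayed statement of the lemma omits the $2$; your remark about the normalization constant is the right way to handle this apparent typo.
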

\begin{proof}
From the definition of $\bb_m$ \eqref{eq:def_of_bb}, we have
\[
\bb_m=\sum_{\substack{n\mathrm{:odd}\\n\leq m }}
\left(
-\frac{\beta}{2}
\right)^{m-n}
{m\choose n}b_n,\qquad
\bb_{-m}=\sum_{\substack{n\mathrm{:odd}\\n\geq m }}
\left(
-\frac{\beta}{2}
\right)^{n-m}
{-m\choose n-m}b_{-n}
\]
for $m>0$. 
Here the binomial coefficient ${-a\choose b}$ with $a,b\geq 0$ is given by ${-a\choose b}=(-1)^b{a+b-1\choose b}$.
By using them, we have
\begin{align*}
\Hb=
2\sum_{m=1}^\infty\frac{p_m(x)}{m}\bb_m
&=
2\sum_{m=1}^\infty\frac{p_m(x)}{m}
\sum_{\substack{n\mathrm{:odd}\\n\leq m }}
\left(
-\frac{\beta}{2}
\right)^{m-n}
{m\choose n}b_n\\
&=
2\sum_{n\mathrm{:odd}}
\sum_{m=n}^\infty
\frac{p_m(x)}{m}
\left(
-\frac{\beta}{2}
\right)^{m-n}
{m\choose n}b_n\\
&=
2\sum_{n\mathrm{:odd}}
\sum_{m=n}^\infty
\frac{p_m(x)}{n}
\left(
\frac{\beta}{2}
\right)^{m-n}
{-n\choose m-n}b_n\\
&=
2\sum_{n\mathrm{:odd}}
\frac{p_n(
\frac{x}{1+\frac{\beta}{2}x}
)}{n}b_n
\end{align*}
and
\begin{align*}
\Hb_{-}=
2\sum_{m=1}^\infty\frac{p_m(x)}{m}\bb_{-m}
&=
2\sum_{\substack{n\mathrm{:odd}}}
\sum_{m=1}^n
\frac{p_m(x)}{m}
\left(
-\frac{\beta}{2}
\right)^{n-m}
{-m\choose n-m}b_{-n}\\
&=
2\sum_{n\mathrm{:odd}}
\sum_{m=1}^n
\frac{p_m(x)}{n}
\left(
\frac{\beta}{2}
\right)^{n-m}
{n\choose n-m}b_{-n}\\
&=
2\sum_{n\mathrm{:odd}}
\frac{
p_n(
x+\frac{\beta}{2}
)
-
p_n(
\frac{\beta}{2}
)
}{n}b_{-n}.
\end{align*}
\end{proof}

Let $\mathcal{U}\subset G\Gamma$ be the $k$-vector space spanned by all $p_\lambda(\frac{x}{1+\frac{\beta}{2}x})$ with $\lambda$ odd.
By Lemma \ref{lemma:basis_of_Gamma} and Proposition \ref{prop:Gamma_vs_GGamma}, $\mathcal{U}$ is a dense subspace of $G\Gamma$.
From Lemma \ref{lemma:Hb_pm}, we have the following characterization of $G\Gamma$ in terms of neutral fermions.
\begin{proposition}\label{prop:GGamma_newtral}
Under the identification $p_n\mapsto p_n(x)$, we have
\[
Q_\lambda(\tfrac{x}{1+\frac{\beta}{2}x})=
\bra{0}e^{\Hb}\ket{\lambda}
\]
for any strict partition $\lambda$.
In other words, a family $\{\bra{0}e^{\Hb}\ket{\lambda}\,\vert\, \lambda \mathrm{:strict}\}$ forms a $k$-basis of a dense subspace of $G\Gamma$.
\end{proposition}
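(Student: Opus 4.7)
The plan is to reduce the proposition immediately to the classical identity \eqref{eq:classical_result} combined with the algebraic substitution from Lemma \ref{lemma:Hb_pm}. The key observation is that, under the identification $p_n\mapsto p_n(x)$, Lemma \ref{lemma:Hb_pm} rewrites $\Hb$ as $2\sum_{n\text{ odd}}\tfrac{p_n(y)}{n}b_n$, where $y_i=\tfrac{x_i}{1+\frac{\beta}{2}x_i}$. But this is nothing other than the ordinary Heisenberg generator $\mathcal{H}=2\sum_{n\text{ odd}}\tfrac{p_n}{n}b_n$ evaluated with $p_n=p_n(y)$. Consequently,
\[
\bra{0}e^{\Hb}\ket{\lambda}\bigl|_{p_n=p_n(x)}
=\bra{0}e^{\mathcal{H}}\ket{\lambda}\bigl|_{p_n=p_n(y)},
\]
and \eqref{eq:classical_result} specialized to the alphabet $y$ identifies the right-hand side with $Q_\lambda(y)=Q_\lambda\bigl(\tfrac{x}{1+\frac{\beta}{2}x}\bigr)$. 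That proves the displayed formula.

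For the second assertion, I would combine the first part with the density statement preceding the proposition. Since $\{Q_\lambda:\lambda\text{ strict}\}$ is a $k$-basis of $\Gamma$ and the substitution $x_i\mapsto y_i$ induces an isomorphism $\widehat{\Gamma}\to G\Gamma$ by Proposition \ref{prop:Gamma_vs_GGamma}, the family $\{Q_\lambda(y)\}$ is $k$-linearly independent inside $G\Gamma$ and its $k$-span equals the image of $\Gamma$ under $x\mapsto y$. By Lemma \ref{lemma:basis_of_Gamma}, $\Gamma$ is also spanned by $\{p_\lambda:\lambda\text{ odd}\}$, so this image coincides with $\mathcal{U}$, which was already noted to be dense in $G\Gamma$. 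Thus $\{\bra{0}e^{\Hb}\ket{\lambda}\}_{\lambda\text{ strict}}$ is a $k$-basis of $\mathcal{U}$, a dense subspace.

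The proof is essentially a bookkeeping exercise, so there is no substantial obstacle. The only point requiring a moment of care is that the formal identification $p_n\mapsto p_n(x)$ must be compatible with the $y$-substitution on both sides; this is unambiguous because $p_1,p_3,p_5,\dots$ are algebraically independent over $\QQ$ by Proposition \ref{prop:Q-property}, and the vacuum expectation value $\bra{0}e^{\mathcal{H}}\ket{\lambda}$ is a polynomial in these $p_n$. The main conceptual content of the proposition therefore lies not in the verification itself but in recognizing that Lemma \ref{lemma:Hb_pm} is exactly the change-of-variables needed to translate the classical neutral-fermionic presentation of $Q_\lambda$ into a presentation of a basis of $G\Gamma$.
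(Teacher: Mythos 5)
Your proof is correct and follows essentially the same route as the paper: both reduce the identity to the classical presentation \eqref{eq:classical_result} via Lemma \ref{lemma:Hb_pm}, the paper phrasing this as $e^{\Hb}\phi_n e^{-\Hb}=\bigl(e^{\mathcal{H}}\phi_n e^{-\mathcal{H}}\bigr)\big|_{x\mapsto x/(1+\frac{\beta}{2}x)}$ while you equivalently read $\Hb$ as $\mathcal{H}$ in the substituted alphabet. Your added detail on the density claim via Proposition \ref{prop:Gamma_vs_GGamma} and $\mathcal{U}$ matches the discussion immediately preceding the proposition.
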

\begin{proof}
From Lemma \ref{lemma:Hb_pm}, we have
$
e^{\Hb}\phi_n e^{-\Hb}=\left.\left( e^{\mathcal{H}} \phi_ne^{-\mathcal{H}}\right)\right|_{x\mapsto \frac{x}{1+\frac{\beta}{2}x}}
$,
which implies the proposition.
\end{proof}

\section{Neutral-fermionic presentation of $GQ_n(x)$}\label{sec:GQ_n}

In \cite{HUDSON2017115,nakagawa2017generating}, Hudson-Ikeda-Matsumura-Naruse derived the following equation\footnote{
Equation (\ref{eq:GQ_n}) is obtained from Definition 10.1 in the preprint 
``\textit{Degeneracy Loci Classes in K-theory --- Determinantal and Pfaffian Formula ---} (arXiv:1504.02828v3)'' by Hudson-Ikeda-Matsumura-Naruse:
\[
{}_kG\Theta(x,a;u)=\frac{1}{1+\beta u^{-1}}
\prod_{i=1}^{\infty}\frac{1+(u+\beta)x_i}{1+(u+\beta)\overline{x}_i}
\prod_{i=1}^{k}\{1+(u+\beta)a_i\}
\]
by putting $k=0$. 
However, this equation seems to be omitted from the published version \cite{HUDSON2017115}. 
Equation (\ref{eq:GQ_n}) can be found also in \cite[\S 5.2]{nakagawa2017generating}.
}, in which they gave a generating function of $GQ_n(x)$ corresponding to a one-row partition $(n)$:
\begin{equation}\label{eq:GQ_n}
\sum_{n\in \ZZ} GQ_n(x)z^n=
\frac{1}{1+\beta z^{-1}}\prod_{i=1}^\infty\frac{1+(z+\beta)x_i}{1+(z+\beta)\overline{x}_i}.
\end{equation}
Here $\overline{x}=0\ominus x=\dfrac{-x}{1+\beta x}$, and $(1+\beta z^{-1})^{-1}$ is expanded as $1-\beta z^{-1}+\beta^2z^{-2}-\cdots$.
From \eqref{eq:GQ_n}, we know that the $GQ_n(x)$ is contained in $G\Gamma$.

In this section, we will give a proof of the following neutral-fermionic presentation of $GQ_n(x)$:
\begin{theorem}
\label{thm:GQ_n}
Under the identification $p_n\mapsto p_n(x)$, we have 
\[
GQ_n(x)=\bra{0}e^{\Hb}\bphi_ne^{\Theta}\bphi_0e^{\Theta}\ket{0}.
\]
\end{theorem}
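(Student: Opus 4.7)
The plan is to promote the statement to a generating-function identity: prove
\[
\bra{0}e^{\Hb}\bphi(z)e^{\Theta}\bphi_0 e^{\Theta}\ket{0} \;=\; \frac{1}{1+\beta z^{-1}}\prod_{i}\frac{1+(z+\beta)x_i}{1+(z+\beta)\overline{x}_i}
\]
under $p_n\mapsto p_n(x)$, which yields the theorem by (\ref{eq:GQ_n}). The right-hand side simplifies: using $1+(z+\beta)\overline{x}_i=(1-zx_i)/(1+\beta x_i)$ and expanding logarithms, $\prod_i\frac{1+(z+\beta)x_i}{1+(z+\beta)\overline{x}_i}$ becomes $C(p)\Phi(z;p)$, where $\Phi(z;p):=\exp\{\sum_{n\geq 1}\frac{p_n}{n}(z^n-(-z-\beta)^n)\}$ is the scalar of Lemma \ref{lemma:e^Hphi^(beta)} and $C(p):=\exp\{-\sum_{n\geq 1}\frac{p_n}{n}(-\beta)^n\}$ is the scalar of Lemma \ref{lemma:e^He^Theta}. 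So the target equals $\Phi(z;p)C(p)/(1+\beta z^{-1})$.

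On the left-hand side I would push the bosonic exponentials outward. Lemma \ref{lemma:e^Hphi^(beta)} extracts $\Phi(z;p)$, Lemma \ref{lemma:e^He^Theta} extracts $C(p)$, and Proposition \ref{prop:eTheta_action_on_phi(beta)} together with $\bra{0}e^{\Theta}=\bra{0}$ gives $\bra{0}\bphi(z)e^{\Theta}=\frac{z}{z+\beta}\bra{0}\bphi(z)$, extracting $(1+\beta z^{-1})^{-1}$. The theorem thus reduces to
\[
\bra{0}\bphi(z)\,e^{\Hb}\bphi_0\,e^{\Theta}\ket{0}\;=\;1.
\]
Writing $\Phi(z';p)=\sum_{k\geq 0}\Phi_k(p)z'^k$ (polynomial in $z'$ for each $p$-degree) and extracting the $z'^0$-coefficient in Lemma \ref{lemma:e^Hphi^(beta)} gives $e^{\Hb}\bphi_0=\bigl(\sum_{k\geq 0}\Phi_k(p)\bphi_{-k}\bigr)e^{\Hb}$. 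Since every $\bb_m$ with $m\geq 1$ is a linear combination of $b_n$ with $n>0$, we have $e^{\Hb}\ket{0}=\ket{0}$. After one more use of Lemma \ref{lemma:e^He^Theta}, the claim becomes
\[
C(p)\sum_{k\geq 0}\Phi_k(p)\,\bra{0}\bphi(z)\bphi_{-k}e^{\Theta}\ket{0}\;=\;1.
\]

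The crux is the striking $z$-independent identity $\bra{0}\bphi(z)\bphi_{-k}e^{\Theta}\ket{0}=(-\beta)^k$. Iterating Corollary \ref{cor:e_Theta_bphi} yields $e^{-\Theta}\bphi_{-k}e^{\Theta}=\sum_{j\geq 0}(-\beta)^j\bphi_{-k+j}$, and the relation $\bphi_m\ket{0}=0$ for $m<0$ collapses this to $\bphi_{-k}e^{\Theta}\ket{0}=(-\beta)^k\,e^{\Theta}\sum_{j\geq 0}(-\beta)^j\bphi_j\ket{0}$. Because $\bphi_m\ket{0}=0$ for $m<0$, the vector $\sum_{j\geq 0}(-\beta)^j\bphi_j\ket{0}$ is the formal substitution $w=-\beta$ into the power series $\bphi(w)\ket{0}$ in $w$, so Proposition \ref{prop:phiphi} gives
\[
\bra{0}\bphi(z)\sum_{j\geq 0}(-\beta)^j\bphi_j\ket{0} = \bra{0}\bphi(z)\bphi(w)\ket{0}\Big|_{w=-\beta} = \frac{z-w}{z+w+\beta}\Big|_{w=-\beta} = \frac{z+\beta}{z}.
\]
Combining with $\bra{0}\bphi(z)e^{\Theta}=\frac{z}{z+\beta}\bra{0}\bphi(z)$, all $z$-dependence cancels and leaves $(-\beta)^k$. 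Then $\sum_{k\geq 0}\Phi_k(p)(-\beta)^k=\Phi(-\beta;p)$; at $z=-\beta$ the exponent $z^n-(-z-\beta)^n$ reduces to $(-\beta)^n$, so $\Phi(-\beta;p)=C(p)^{-1}$ and hence $C(p)\Phi(-\beta;p)=1$, as required. I expect the main obstacle to be finding the right sequence of commutations that exposes $\bphi_{-k}e^{\Theta}\ket{0}$ as a scalar multiple of a single $z$-independent vector; once this is achieved, the cancellation $C(p)\Phi(-\beta;p)=1$ is tautological from the exponential formulas for $C$ and $\Phi$.
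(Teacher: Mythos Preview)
Your proof is correct and arrives at the same generating-function identity as the paper, but the route is genuinely different.

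The paper isolates an auxiliary quantity first: it proves Lemma~\ref{lemma:for_theorem_GQ_n}, namely
\[
\bra{0}e^{\Hb}\bphi(z)e^{-\Theta}\bphi_0 e^{\Theta}\ket{0}
=(1+\beta z^{-1})\exp\Bigl\{\textstyle\sum_{n\geq 1}\tfrac{p_n}{n}\bigl(z^n-(-z-\beta)^n+(-\beta)^n\bigr)\Bigr\},
\]
by writing $F(z,w)=\bra{0}e^{\Hb}\bphi(z)e^{-\Theta}\bphi(w)e^{\Theta}\ket{0}$ as a rational function and extracting the $w^0$ coefficient via a \emph{contour integral}, picking up a single residue at $w=-\beta$.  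Only afterward does it pass to the quantity in Theorem~\ref{thm:GQ_n} (with $e^{+\Theta}$) by inserting $e^{-2\Theta}e^{2\Theta}$ and using Lemma~\ref{lemma:e^He^Theta} and Proposition~\ref{prop:eTheta_action_on_phi(beta)}.

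You instead attack $\bra{0}e^{\Hb}\bphi(z)e^{\Theta}\bphi_0 e^{\Theta}\ket{0}$ directly, commuting the three exponentials past $\bphi(z)$ to strip off $\Phi(z;p)$, $C(p)$, and $(1+\beta z^{-1})^{-1}$, and then prove the residual identity $\bra{0}\bphi(z)e^{\Hb}\bphi_0 e^{\Theta}\ket{0}=1$ by a purely formal-series argument: the key observation $\bphi_{-k}e^{\Theta}\ket{0}=(-\beta)^k e^{\Theta}\sum_{j\geq 0}(-\beta)^j\bphi_j\ket{0}$ reduces everything to the specialization $w=-\beta$ in Proposition~\ref{prop:phiphi} and the tautology $C(p)\Phi(-\beta;p)=1$.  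This is legitimate because the expansion of $\frac{z-w}{z+w+\beta}$ in $k((w^{-1}))((z^{-1}))$ actually lies in $k[w][[z^{-1}]]$ (each $z^{-n}$-coefficient is a polynomial in $w$), so the substitution $w=-\beta$ is a well-defined formal specialization.

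Both arguments ultimately evaluate at $w=-\beta$; the paper does so analytically via a residue, you do so algebraically via series substitution.  Your approach is more elementary (no complex-analytic interlude) and shorter for this theorem alone.  The paper's approach, however, packages the intermediate computation as Lemma~\ref{lemma:for_theorem_GQ_n}, which it reuses verbatim in~\S\ref{sec:proof_main} to evaluate $\bra{0}e^{\Hb}A_iA_{r+1}\ket{0}$ for the Pfaffian formula; your reduction does not produce that lemma as a by-product.
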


The proof of Theorem \ref{thm:GQ_n} will be given by straightforward calculations based on commutation relations which we have shown in the previous sections.
We first prove the following lemma:
\begin{lemma}\label{lemma:for_theorem_GQ_n}
We have
\begin{align*}
&\bra{0}e^{\Hb}\bphi(z)e^{-\Theta}\bphi_0e^{\Theta}\ket{0}\\
&=(1+\beta z^{-1})
\exp\left\{
\sum_{n=1}^\infty
\frac{p_n}{n}
(z^{n}-(-z-\beta)^n+(-\beta)^n)
\right\}
\end{align*}
in $k((z^{-1}))[[p]]$.
\end{lemma}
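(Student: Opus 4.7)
The plan is to first rewrite the middle operator via Corollary~\ref{cor:e_Theta_bphi} as $e^{-\Theta}\bphi_0 e^\Theta = \sum_{m\geq 0}(-\beta)^m\bphi_m$, which converts the left-hand side into
\[
\sum_{m\geq 0}(-\beta)^m\bra{0}e^\Hb\bphi(z)\bphi_m\ket{0};
\]
then to push $e^\Hb$ rightward past $\bphi(z)$ and $\bphi_m$ using Lemma~\ref{lemma:e^Hphi^(beta)}; and finally to evaluate the resulting two-point vacuum expectation values via Proposition~\ref{prop:phiphi}.

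Denote $F(z) := \exp\bigl\{\sum_{n\geq 1}(p_n/n)(z^n - (-z-\beta)^n)\bigr\}$, so that Lemma~\ref{lemma:e^Hphi^(beta)} reads $e^\Hb\bphi(z) = F(z)\bphi(z)e^\Hb$. Since $F(w) = \sum_{k\geq 0}f_k w^k$ is a power series in $w$ with only non-negative powers, expanding the identity $e^\Hb\bphi(w)e^{-\Hb} = F(w)\bphi(w)$ and extracting the $w^m$ coefficient yields $e^\Hb\bphi_m e^{-\Hb} = \sum_{k\geq 0}f_k\bphi_{m-k}$. Combined with $e^\Hb\ket{0}=\ket{0}$ (because $\bb_n\ket{0}=0$ for $n\geq 1$) and $\bphi_n\ket{0}=0$ for $n<0$, this gives $e^\Hb\bphi_m\ket{0} = \sum_{k=0}^m f_k\bphi_{m-k}\ket{0}$.

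Substituting and reindexing via $\ell = m - k$, the double sum factorizes cleanly as
\[
F(z)\Bigl(\sum_{k\geq 0}f_k(-\beta)^k\Bigr)\Bigl(\sum_{\ell\geq 0}(-\beta)^\ell\bra{0}\bphi(z)\bphi_\ell\ket{0}\Bigr).
\]
The first bracketed sum equals $F(-\beta) = \exp\bigl\{\sum_{n\geq 1}(p_n/n)(-\beta)^n\bigr\}$ by direct evaluation, since $-(-\beta)-\beta=0$ and $0^n=0$ for $n\geq 1$. For the second factor, Proposition~\ref{prop:phiphi} provides the generating series $\sum_\ell\bra{0}\bphi(z)\bphi_\ell\ket{0}\,w^\ell = \frac{z-w}{z+w+\beta}$; substituting $w=-\beta$ gives $\frac{z+\beta}{z} = 1+\beta z^{-1}$. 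Multiplying the three factors and combining their exponents produces the desired formula.

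The main obstacle is rigorously justifying the formal substitution $w=-\beta$ in the two-point function. This rests on the observation (clear from the explicit expansion displayed in Proposition~\ref{prop:phiphi}) that each $z^{-k}$-coefficient of $\frac{z-w}{z+w+\beta}$ is a polynomial in $w$ of degree at most $k$, so the substitution is well-defined term by term in $z^{-1}$ and the $\ell$-sum collapses to a finite sum at each order in $z^{-1}$. An analogous finiteness argument, namely that at each fixed total $p$-degree the series $F(w)$ is a polynomial in $w$, validates the first bracketed sum and the identification with $F(-\beta)$.
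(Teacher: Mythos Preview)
Your proof is correct, but it takes a genuinely different route from the paper's. The paper introduces the full two-variable generating function $F(z,w)=\bra{0}e^{\Hb}\bphi(z)e^{-\Theta}\bphi(w)e^{\Theta}\ket{0}$, uses Proposition~\ref{prop:eTheta_action_on_phi(beta)} in its generating-function form to produce the factor $(1+\beta w^{-1})^{-1}$, and then extracts the coefficient of $w^{0}$ by a contour integral on the $w$-plane: the only pole inside the contour is at $w=-\beta$, and the residue there yields $(1+\beta z^{-1})\eta(z)\eta(-\beta)$ directly. Your argument instead expands $e^{-\Theta}\bphi_{0}e^{\Theta}$ via Corollary~\ref{cor:e_Theta_bphi} and works entirely with formal series, so that the evaluation at $w=-\beta$ emerges as a termwise-finite substitution rather than a residue. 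Your approach is more elementary in that it avoids the excursion through complex analysis and stays inside the formal-series framework the paper has already set up; the paper's contour-integral method is a bit slicker computationally, treats $\bphi_{0}$ on the same footing as $\bphi(w)$, and makes the mechanism (pole at $w=-\beta$) geometrically transparent. Both ultimately hinge on the same identity $\eta(-\beta)=\exp\{\sum_{n}\frac{p_{n}}{n}(-\beta)^{n}\}$ and the same cancellation $\frac{z-w}{z+w+\beta}\big|_{w=-\beta}=1+\beta z^{-1}$.
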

\begin{proof}
Let $F(z,w)=\bra{0}e^{\Hb}\bphi(z)e^{-\Theta}\bphi(w)e^{\Theta}\ket{0}$.
From Proposition \ref{prop:eTheta_action_on_phi(beta)}, we have
\[
F(z,w)=(1+\beta w^{-1})^{-1}\cdot \bra{0}e^{\Hb}\bphi(z)\bphi(w)\ket{0}.
\]
By using Proposition \ref{prop:phiphi} and Lemma \ref{lemma:e^Hphi^(beta)}, we obtain
\[
F(z,w)=
\frac{1}{1+\beta w^{-1}}
\eta(z)\eta(w)
\frac{z-w}{z+w+\beta}\quad \mbox{in}\quad
k((w^{-1}))((z^{-1}))[[p]],
\]
where 
$\eta(a)=
\exp\left\{
\sum_{n=1}^\infty
\frac{p_n}{n}
(a^{n}-(-a-\beta)^n)
\right\}
$.
In order to calculate the desired value $\bra{0}e^{\Hb}\bphi(z)e^{-\Theta}\bphi_0e^{\Theta}\ket{0}$, we have to expand $F(z,w)$ in the ring $k((w^{-1}))((z^{-1}))[[p]]$ and take the coefficient of $w^0$.
For this, it is convenient to use contour integrals on the complex plane.
We temporally regard $\beta,w,z,p_1,p_2,\dots$ as complex variables.
The desired expansion of $F(z,w)$ is realized as the Laurent expansion on the domain $\{\zet{\beta}<\zet{w}\}\cap \{\zet{w+\beta}<\zet{z}\}$.
Assume $2\zet{\beta}<\zet{z}$.
Letting $R$ be a positive number satisfying $2\zet{\beta}<R<\zet{z}$ (Figure \ref{fig:one}), we can express the desired value as
\[
I=\frac{1}{2\pi i}\oint_{\zet{w}=R} F(z,w)\frac{dw}{w}.
\]
Since $F(z,w)\frac{dw}{w}$ has one pole at $w=-\beta$ inside the contour, we have
\begin{align*}
I&=
\mathop{\mathrm{Res}}_{w=-\beta}
\left[
F(z,w)\frac{1}{w}
\right]
=
\left.F(z,w)\frac{w+\beta}{w}\right|_{w=-\beta}
=\eta(z)\eta(-\beta)\frac{z+\beta}{z},
\end{align*}
which leads the lemma.
\begin{figure}[htbp]
\begin{center}
\begin{tikzpicture}
\draw (0,0) circle[radius = 1.5];
\draw[->,thick] (0,1.5)--(-0.1,1.5);
\coordinate (O) at (0,0) node at (O) [left] {$0$};
\fill (O) circle (2pt);
\coordinate (B) at (-.5,-.7) node at (B) [below right] {$-\beta$};
\fill (B) circle (2pt);
\coordinate (W) at (30:1.5) node at (W) [right] {$w$};
\fill (W) circle (2pt);
\coordinate (Z) at (5,0.5) node at (Z) [right] {$z$};
\fill (Z) circle (2pt);
\end{tikzpicture}
\end{center}
\caption{An integral path on the $w$-plane.
The radius of the contour is $R$.
The differential $F(z,w)\frac{dw}{w}$ has one pole at $w=-\beta$.}
\label{fig:one}
\end{figure}
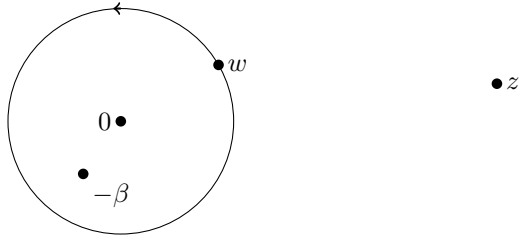
\end{proof}

\begin{proof}[Proof of Theorem \ref{thm:GQ_n}]
Write $\theta(a)=
\exp\left\{
\sum_{n=1}^\infty
\frac{p_n}{n}
a^{n}
\right\}
$.
Notice that, under the identification $p_n\mapsto p_n(x)$, $\theta(a)$ is equal to $\prod_{i=1}^\infty\frac{1}{1-x_ia}$.
Therefore, we have
\begin{align*}
&\bra{0}e^{\Hb}\bphi(z)e^{\Theta}\bphi_0e^{\Theta}\ket{0}\\
&=
\bra{0}(e^{-2\Theta}e^{\Hb}\bphi(z)e^{2\Theta} )(e^{-\Theta}\bphi_0e^{\Theta})\ket{0}
\hspace{3em} (\mbox{By using }\bra{0}e^{-2\Theta}=\bra{0})\\
&=\theta(-\beta)^{-2}(1+\beta z^{-1})^{-2}
\bra{0}e^{\Hb}\bphi(z)e^{-\Theta}\bphi_0e^{\Theta}\ket{0}\\
&\hspace{18em}
(\mbox{By Lemma \ref{lemma:e^He^Theta} and Proposition \ref{prop:eTheta_action_on_phi(beta)}})
\\
&=\theta(-\beta)^{-2}(1+\beta z^{-1})^{-2}\cdot (1+\beta z^{-1})\theta(z)\theta(-z-\beta)^{-1}\theta(-\beta)\\
& \hspace{26em}
(\mbox{By Lemma \ref{lemma:for_theorem_GQ_n}})
\\
&=
\frac{\theta(z)}{(1+\beta z^{-1})\theta(-\beta)\theta(-z-\beta)}
\quad \mbox{in}\quad
k((z^{-1}))[[p]].
\end{align*}
Thus we obtain
\[
\bra{0}e^{\Hb}\bphi(z)e^{\Theta}\bphi_0e^{\Theta}\ket{0}=
\frac{1}{1+\beta z^{-1}}\prod_{i=1}^\infty
\frac{(1+x_i\beta)(1+x_i(z+\beta))}{1-x_iz}.
\]
Comparing this equation to \eqref{eq:GQ_n} leads the theorem.
\end{proof}

\begin{remark}\label{rem:important_remark}
Let 
$
GQ(z)=\sum_{n\in \ZZ} GQ_n(x)z^n
$.
Lemma \ref{lemma:e^Hphi^(beta)} is now rewritten as
\[
e^{\Hb}\bphi(z)e^{-\Hb}=(1+\beta z^{-1})\theta(-\beta)GQ(z) \bphi(z).
\]
\end{remark}

\section{Neutral-fermionic presentation of $GQ_\lambda(x)$}\label{sec:GQ_lambda}

We now proceed to the main theorem, which provides a neutral-fermionic presentation of $GQ_\lambda(x)$ corresponding to a strict partition $\lambda$:
\begin{theorem}\label{thm:main1}
We have
\[
GQ_\lambda(x)=
\begin{cases}
\bra{0}
e^{\Hb}
\phi^{(\beta)}_{\lambda_1}e^{\Theta}
\phi^{(\beta)}_{\lambda_2}e^{\Theta}
\cdots
\phi^{(\beta)}_{\lambda_{r}}e^{\Theta}
\ket{0},
&
\mbox{if $r$ is even},\\
\bra{0}
e^{\Hb}
\phi^{(\beta)}_{\lambda_1}e^{\Theta}
\phi^{(\beta)}_{\lambda_2}e^{\Theta}
\cdots
\phi^{(\beta)}_{\lambda_{r}}e^{\Theta}
\phi^{(\beta)}_{0}e^{\Theta}
\ket{0},
&
\mbox{if $r$ is odd},
\end{cases}
\]
where $GQ_\lambda(x)$ is the $K$-theoretic $Q$-function defined in Theorem \ref{def-prop:pf_GQ}.
\end{theorem}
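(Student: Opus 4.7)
The plan is to reduce the theorem to the Pfaffian formula in Theorem \ref{def-prop:pf_GQ} via an explicit computation of a generating function. I would first unify the even and odd cases: let $r'$ be the smallest even integer with $r'\ge r$, set $\lambda_{r+1}=\cdots=\lambda_{r'}=0$, and observe that the right-hand side of the theorem equals the coefficient $[z_1^{\lambda_1}\cdots z_{r'}^{\lambda_{r'}}]$ of
\[
F(z_1,\dots,z_{r'}):=\bra{0}e^{\Hb}\bphi(z_1)e^{\Theta}\bphi(z_2)e^{\Theta}\cdots \bphi(z_{r'})e^{\Theta}\ket{0}.
\]

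Next I would simplify $F$ using the commutation relations of the previous sections. Proposition \ref{prop:eTheta_action_on_phi(beta)} in the form $\bphi(z)e^{\Theta}=(1+\beta z^{-1})^{-1}e^{\Theta}\bphi(z)$ lets me sweep each $e^{\Theta}$ leftward past the preceding $\bphi(z_j)$'s; Lemma \ref{lemma:e^He^Theta} then moves it past $e^{\Hb}$ at the cost of an explicit scalar, and finally $\bra{0}e^{\Theta}=\bra{0}$ absorbs it. After all $r'$ copies are cleared, the accumulated prefactor is $\prod_j (1+\beta z_j^{-1})^{-(r'-j+1)}$ times a scalar that will cancel in the next step. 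With the $e^{\Theta}$'s gone, Lemma \ref{lemma:e^Hphi^(beta)} pushes $e^{\Hb}$ rightward through each $\bphi(z_j)$, after which the resulting $e^{\Hb}$ annihilates $\ket{0}$ (since $\bb_n\ket{0}=0$ for $n>0$). This leaves $\bra{0}\bphi(z_1)\cdots\bphi(z_{r'})\ket{0}$, which by Wick's theorem and Proposition \ref{prop:phiphi} equals $\Pf\!\left(\frac{z_i-z_j}{z_i+z_j+\beta}\right)_{1\le i<j\le r'}$. Rewriting the prefactor via Remark \ref{rem:important_remark} in terms of $GQ(z):=\sum_n GQ_n(x)z^n$ and absorbing the diagonal factors into the rows and columns of the Pfaffian, I arrive at
\[
F(z_1,\dots,z_{r'})=\Pf\!\left(\Omega_{i,j}(z_i,z_j)\right)_{1\le i<j\le r'},\qquad \Omega_{i,j}(z_i,z_j)=\frac{(z_i-z_j)\,GQ(z_i)\,GQ(z_j)}{(z_i+z_j+\beta)(1+\beta z_i^{-1})^{r'-i}(1+\beta z_j^{-1})^{r'-j}}.
\]

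To conclude, since each variable $z_k$ appears in exactly one factor of each Pfaffian term, coefficient extraction distributes as $[z_1^{\lambda_1}\cdots z_{r'}^{\lambda_{r'}}]\Pf(\Omega_{i,j})=\Pf\!\left([z_i^{\lambda_i}z_j^{\lambda_j}]\Omega_{i,j}\right)$. The substitution $t_i=z_i^{-1}$ identifies the rational function in the footnote of Theorem \ref{def-prop:pf_GQ} with the $(1+\beta z_\bullet^{-1})$-part of $\Omega_{i,j}$, so for $j\le r$ the entries read off as $\tau_{i,j}$ directly. In the odd case, the HIMN exponent for the $j=r+1$ column is $r-i$ while mine is $r+1-i$; the gap is closed by the identity
\[
[z_{r+1}^0]\,\frac{z_i-z_{r+1}}{z_i+z_{r+1}+\beta}\,GQ(z_{r+1})=1+\beta z_i^{-1},
\]
which is itself a consequence of Theorem \ref{thm:GQ_n}: the two-variable instance of $F$ equals $\Omega_{1,2}(z_i,z_{r+1})$, and extracting $[z_{r+1}^0]$ must return $\bra{0}e^{\Hb}\bphi(z_i)e^{\Theta}\bphi_0 e^{\Theta}\ket{0}=GQ(z_i)$.

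The main obstacle is the careful bookkeeping of the $(1+\beta z_i^{-1})$ exponents through the successive commutations, together with checking that all formal series live in the iterated Laurent ring $\mathcal{E}((z_{r'}^{-1}))\cdots((z_1^{-1}))$, which corresponds to the HIMN convergence region $|z_1|>|z_2|>\cdots>|z_{r'}|$. The delicate step is the odd case: the identity closing the $j=r+1$ column is not apparent from the explicit generating function of $GQ(z)$ and is most cleanly obtained by bootstrapping Theorem \ref{thm:GQ_n} inside the two-variable Pfaffian.
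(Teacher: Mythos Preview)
Your proposal is correct and follows essentially the same route as the paper: collapse the $e^{\Theta}$'s by conjugation, pass to a Pfaffian via Wick's theorem, and identify the entries with those of Theorem~\ref{def-prop:pf_GQ}. The only notable variation is your handling of the odd case---the paper keeps $A_{r+1}=e^{-\Theta}\bphi_0e^{\Theta}$ as a distinct operator and evaluates the $(i,r+1)$ entry directly via Lemma~\ref{lemma:for_theorem_GQ_n}, whereas you work uniformly with the $r'$-variable generating function and recover the same column by extracting $[z_{r+1}^0]$ and invoking the one-row case (Theorem~\ref{thm:GQ_n}) to close the exponent discrepancy.
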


The proof of Theorem \ref{thm:main1} will be done in \S \ref{sec:proof_main} by comparing the Pfaffian expression in Theorem \ref{def-prop:pf_GQ} and that of the neutral-fermionic expressions.

\subsection{Pfaffian formula I for $GQ_\lambda$}\label{sec:proof_main}

We define the formal series $\Phi=\Phi(z_1,\dots,z_r)$ as
\[
\Phi=
\begin{cases}
\bra{0}
e^{\Hb}\phi^{(\beta)}(z_{1})e^{\Theta}\phi^{(\beta)}(z_2)e^{\Theta}
\cdots
\phi^{(\beta)}(z_{r})e^{\Theta}
\ket{0},& \mbox{if $r$ is even},\\
\bra{0}
e^{\Hb}\phi^{(\beta)}(z_{1})e^{\Theta}\phi^{(\beta)}(z_2)e^{\Theta}
\cdots
\phi^{(\beta)}(z_{r})e^{\Theta}\phi^{(\beta)}_0e^{\Theta}
\ket{0},& \mbox{if $r$ is odd}.
\end{cases}
\]
It is enough to prove that the coefficient of $z_1^{\lambda_1}\cdots z_r^{\lambda_r}$ in $\Phi$ is equal to $GQ_\lambda(x)$.
Let $r'$ be the smallest even number that is equal to or greater than $r$.
By putting 
\begin{align*}
&A_i
=e^{-(r'-i+1)\Theta}\phi^{(\beta)}(z_i)e^{(r'-i+1)\Theta}
\stackrel{ \mathrm{Prop.} \ref{prop:eTheta_action_on_phi(beta)}}{=}(1+\beta z^{-1})^{-(r'-i+1)}\phi^{(\beta)}(z_i)
,\\
&A_{r+1}=e^{-\Theta}\phi^{(\beta)}_0e^{\Theta},
\end{align*}
the generating function $\Phi$ is rewritten as
\begin{align*}
\Phi
&=
\bra{0}
e^{\Hb}e^{r'\Theta}
A_1A_2\cdots A_{r'}
\ket{0}
\stackrel{ \mathrm{Lemma }\, \ref{lemma:e^He^Theta} }{=}
\theta(-\beta)^{-r'}
\bra{0}
e^{\Hb}A_1A_2\cdots A_{r'}
\ket{0}.
\end{align*}
By Wick's theorem (Theorem \ref{thm:Wick}), we have the Pfaffian expression
\begin{align}\label{eq:Phi_matrix}
\Phi=
\theta(-\beta)^{-r'}\Pf
\left(
\bra{0}
e^{\Hb}
A_iA_j
\ket{0}
\right)_{1\leq i<j \leq r'}.
\end{align}
The matrix element $\bra{0}
e^{\Hb}
A_iA_j
\ket{0}$ can be calculated as follows:
\begin{align*}
&\bra{0}
e^{\Hb}
A_iA_j
\ket{0}\\
&=
(1+\beta z_i^{-1})^{-(r'-i+1)}
(1+\beta z_j^{-1})^{-(r'-j+1)}
\bra{0}
e^{\Hb}\bphi(z_i)\bphi(z_j)
\ket{0}
\nonumber\\
&=
\theta(-\beta)^2
(1+\beta z_i^{-1})^{-(r'-i)}
(1+\beta z_j^{-1})^{-(r'-j)}
GQ(z_i)GQ(z_j)
\frac{z_i-z_j}{z_i+z_j+\beta}\\
&\hspace{5em}
(\mbox{By using $e^{\Hb}\ket{0}=\ket{0}$, Proposition \ref{prop:phiphi}, and Remark \ref{rem:important_remark}} )
\end{align*}
for $j< r+1$, and 
\begin{align*}
&\bra{0}
e^{H(t)}
A_iA_{r+1}
\ket{0}\\
&
=
(1+\beta z_i^{-1})^{-(r'-i+1)}
\bra{0}
e^{\Hb}\phi^{(\beta)}(z_i)e^{-\Theta}\bphi_0e^{\Theta}
\ket{0}
\\
&=(1+\beta z_i^{-1})^{-(r'-i+1)}(1+\beta z_i^{-1})^2\theta(-\beta)^2
GQ(z_i)
\qquad
(\mbox{By Lemma \ref{lemma:for_theorem_GQ_n}})
\\
&=\theta(-\beta)^2(1+\beta z_i^{-1})^{-(r'-i-1)}GQ(z_i)
\end{align*}
for $j=r+1$.

Summarizing, we obtain
\begin{lemma}\label{lemma:Phi_Pf}
We have
\begin{equation}\label{eq:Phi(z)}
\Phi=
\Pf\left(
\mathcal{T}_{i,j}
\right)_{1\leq i<j\leq r'},
\end{equation}
where $\mathcal{T}_{i,j}$ is 
\[
\begin{cases}
(1+\beta z_i^{-1})^{-(r'-i)}(1+\beta z_j^{-1})^{-(r'-j)}
GQ(z_i)GQ(z_j)
\dfrac{z_i-z_j}{z_i+z_j+\beta}, & j\neq r+1, \\
(1+\beta z_i^{-1})^{-(r'-i-1)}GQ(z_i), & j=r+1,
\end{cases}
\]
in $G\Gamma((z_{r}^{-1}))\cdots ((z_2^{-1}))((z_1^{-1}))$.
\end{lemma}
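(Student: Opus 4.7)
The plan is to reduce $\Phi$ to a single vacuum expectation value of $\beta$-deformed neutral fermions (with no intervening $e^\Theta$'s), then apply Wick's theorem. First I would consolidate the $e^\Theta$'s by defining $A_i = e^{-(r'-i+1)\Theta}\bphi(z_i)e^{(r'-i+1)\Theta}$ for $1\le i\le r$ and, when $r$ is odd, $A_{r+1} = e^{-\Theta}\bphi_0 e^{\Theta}$. By iterated application of Proposition \ref{prop:eTheta_action_on_phi(beta)}, each $A_i$ with $i\le r$ equals the scalar multiple $(1+\beta z_i^{-1})^{-(r'-i+1)}\bphi(z_i)$, while Corollary \ref{cor:e_Theta_bphi} shows $A_{r+1}$ is a scalar-series linear combination of the $\bphi_n$'s. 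In both parity cases one then has $\Phi = \bra{0}e^{\Hb}e^{r'\Theta}A_1A_2\cdots A_{r'}\ket{0}$.

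Next I would push $e^{r'\Theta}$ to the left of $e^{\Hb}$ by $r'$ successive applications of Lemma \ref{lemma:e^He^Theta}, producing the overall scalar $\theta(-\beta)^{-r'}$, and absorb the remaining $e^{r'\Theta}$ via $\bra{0}e^{r'\Theta}=\bra{0}$ to obtain $\Phi = \theta(-\beta)^{-r'}\bra{0}e^{\Hb}A_1\cdots A_{r'}\ket{0}$. Since each $A_i$ is a $k$-linear combination of the fundamental fermions and $e^{\Hb}\bphi(z)e^{-\Hb}$ is a scalar multiple of $\bphi(z)$ by Remark \ref{rem:important_remark}, Wick's theorem (Theorem \ref{thm:Wick}) applies by $k$-bilinearity, yielding
\[
\Phi = \theta(-\beta)^{-r'}\Pf\bigl(\bra{0}e^{\Hb}A_iA_j\ket{0}\bigr)_{1\le i<j\le r'}.
\]
Here I would use the elementary fact that pulling a common factor $c_i c_j$ out of every $(i,j)$ entry of a $2m\times 2m$ antisymmetric matrix multiplies its Pfaffian by $c_1c_2\cdots c_{2m}$.

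Finally I would evaluate each two-point function explicitly. For $i<j\le r$, I would factor out the scalars from $A_i$ and $A_j$, conjugate $e^{\Hb}$ past each $\bphi(z_i)$ and $\bphi(z_j)$ using Remark \ref{rem:important_remark} (each conjugation contributes $(1+\beta z^{-1})\theta(-\beta)GQ(z)$), and evaluate the remaining $\bra{0}\bphi(z_i)\bphi(z_j)\ket{0}$ by Proposition \ref{prop:phiphi}; this yields $\theta(-\beta)^2$ times the claimed $\mathcal{T}_{i,j}$. For $j=r+1$, the required expression is essentially $\bra{0}e^{\Hb}\bphi(z_i)e^{-\Theta}\bphi_0 e^{\Theta}\ket{0}$ times the scalar prefactor of $A_i$, which is computed by Lemma \ref{lemma:for_theorem_GQ_n}; rewriting its output via \eqref{eq:GQ_n} identifies it as $\theta(-\beta)^2$ times $\mathcal{T}_{i,r+1}$. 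The chief bookkeeping obstacle is verifying that all scalar factors combine correctly: each of the $r'/2$ entries in every Pfaffian monomial contributes $\theta(-\beta)^2$, so the total factor $\theta(-\beta)^{r'}$ exactly cancels $\theta(-\beta)^{-r'}$, and each conjugation via Remark \ref{rem:important_remark} shifts the exponent of $(1+\beta z_i^{-1})^{-1}$ from $r'-i+1$ down to $r'-i$ (or, in the $j=r+1$ column, down to $r'-i-1$, since Lemma \ref{lemma:for_theorem_GQ_n} supplies an additional $(1+\beta z_i^{-1})^2$), in exact agreement with the lemma's stated formula.
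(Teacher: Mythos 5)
Your proposal is correct and follows essentially the same route as the paper: the same operators $A_i$, the same use of Proposition \ref{prop:eTheta_action_on_phi(beta)} and Lemma \ref{lemma:e^He^Theta} to reduce to $\theta(-\beta)^{-r'}\bra{0}e^{\Hb}A_1\cdots A_{r'}\ket{0}$, Wick's theorem, and the evaluation of the two-point functions via Remark \ref{rem:important_remark}, Proposition \ref{prop:phiphi}, and Lemma \ref{lemma:for_theorem_GQ_n}, with the identical cancellation of $\theta(-\beta)^{\pm r'}$. Your explicit remarks on extending Wick's theorem by bilinearity and on extracting the common factors $\theta(-\beta)^2$ from the Pfaffian entries merely make precise what the paper leaves implicit.
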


\begin{proof}[Proof of Theorem \ref{thm:main1}]
The main theorem \ref{thm:main1} is now obvious.
By putting $t_i=z_i^{-1}$, the Pfaffian formula \eqref{eq:Phi(z)} is rewritten as
\begin{align*}
\Phi
&=
\Pf\left(
\sum_{p\geq 0,\ p+q\geq 0}\sum_{a,b=0}^\infty 
f^{i,j}_{p,q}\cdot
GQ_a(x)
GQ_b(x) 
z_i^{a-p}z_j^{b-q}
\right)_{1\leq i<j\leq r'},
\end{align*}
where $f^{i,j}_{p,q}$ is an element of $\QQ[\beta]$ that has been given in Theorem \ref{def-prop:pf_GQ}.
By comparing the coefficients of $z_1^{\lambda_1}z_2^{\lambda_2}\cdots z_{r}^{\lambda_{r}}$ on the both sides, we conclude that the coefficient of $\Phi$ is equal to $GQ_\lambda(x)$.
This is what we wanted to prove.
\end{proof}


\subsection{Pfaffian formula II for $GQ_\lambda$}\label{sec:anotherP}

We can derive an alternative Pfaffian expression for $GQ_\lambda(x)$ by using the fermionic presentation in Theorem \ref{thm:main1}.
Since the matrix element $\bra{0}
e^{\Hb}
A_iA_j
\ket{0}$ in \eqref{eq:Phi_matrix} can be rewritten as
\begin{align*}
&\bra{0}
e^{\Hb}
A_iA_j
\ket{0}\\
&=
(1+\beta z_i^{-1})^{-(r'-i+1)}
(1+\beta z_j^{-1})^{-(r'-j+1)}
\bra{0}
e^{\Hb}\bphi(z_i)\bphi(z_j)
\ket{0}\\
&=
\theta(-\beta)^{-2}
(1+\beta z_i^{-1})^{-(r'-i-1)}
(1+\beta z_j^{-1})^{-(r'-j)}
\bra{0}
e^{\Hb}\bphi(z_i)e^\Theta\bphi(z_j)e^\Theta
\ket{0}\\
&\hspace{10em}
(\mbox{By using $\bra{0}e^{\Theta}=\bra{0}$, Lemma \ref{lemma:e^He^Theta}, and Proposition \ref{prop:eTheta_action_on_phi(beta)}})
\\
&=
\theta(-\beta)^{-2}
(1+\beta z_i^{-1})^{-(r'-i-1)}
(1+\beta z_j^{-1})^{-(r'-j)}
\Phi(z_i,z_j),
\end{align*}
we have the following new Pfaffian expression
\begin{equation}\label{eq:alterm}
\Phi(z_1,\dots,z_r)
=
\Pf
\left(
\mathcal{R}_{i,j}
\right)_{1\leq i<j \leq r'},
\end{equation}
where
\[
\mathcal{R}_{i,j}=
\begin{cases}
(1+\beta z_i^{-1})^{-(r'-i-1)}
(1+\beta z_j^{-1})^{-(r'-j)}
\Phi(z_i,z_j),& j\neq r+1, \\
(1+\beta z_i^{-1})^{-(r'-i-1)}
\Phi(z_i),& j=r+1.
\end{cases}
\]

Comparing the coefficients of $z_1^{\lambda_1}\cdots z_{r}^{\lambda_{r}}$ of (\ref{eq:alterm}), we obtain
\begin{proposition}[See~{\cite[Theorem 5.7]{nakagawa2017generating}}]\label{prop:another_Pfaffian}
We have
\[
GQ_\lambda(x)
=
\Pf\left(
\rho_{i,j}
\right)_{1\leq i<j \leq r'},
\]
where
\[
\rho_{i,j}=
\begin{cases}
\sum_{k=0}^{\infty}
\sum_{l=0}^{\infty}
\beta^{k+l}
\binom{i+1-r'}{k}
\binom{j-r'}{l}
GQ_{(\lambda_i+k,\lambda_j+l)}(x), & j\neq r+1,\\
\sum_{k=0}^{\infty}
\beta^{k}
\binom{i+1-r'}{k}
GQ_{\lambda_i+k}(x), & j= r+1.
\end{cases}
\]
\end{proposition}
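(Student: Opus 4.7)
The plan is to derive Proposition \ref{prop:another_Pfaffian} by extracting the coefficient of $z_1^{\lambda_1}\cdots z_r^{\lambda_r}$ from both sides of the Pfaffian identity \eqref{eq:alterm}. On the left-hand side, the argument that proves Theorem \ref{thm:main1} identifies this coefficient as $GQ_\lambda(x)$. On the right-hand side, I need to compute the same coefficient of the Pfaffian $\Pf(\mathcal{R}_{i,j})$ and recognize it as $\Pf(\rho_{i,j})$.

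The structural key is that each term of the Pfaffian expansion is a signed product over a pair partition of $\{1,\dots,r'\}$, and every variable $z_k$ (for $k \leq r$) appears in exactly one factor $\mathcal{R}_{i,j}$ of each such term. Consequently the coefficient extraction distributes over the Pfaffian,
\[
[z_1^{\lambda_1}\cdots z_r^{\lambda_r}]\,\Pf(\mathcal{R}_{i,j}) = \Pf\bigl([z_i^{\lambda_i} z_j^{\lambda_j}]\,\mathcal{R}_{i,j}\bigr)_{1\leq i<j\leq r'},
\]
with the understanding that for $j=r+1$ the bracket reduces to $[z_i^{\lambda_i}]$. To compute each entry, I would expand the scalar prefactors using the generalized binomial series
\[
(1+\beta z^{-1})^{-m} = \sum_{k\geq 0}\binom{-m}{k}\beta^{k}z^{-k}
\]
in $k((z^{-1}))$, and substitute the generating-function expansions $\Phi(z_i,z_j) = \sum_{a,b} GQ_{(a,b)}(x)z_i^{a}z_j^{b}$ (from Theorem \ref{thm:main1} applied in the two-variable case) and $\Phi(z_i) = \sum_{a} GQ_{a}(x)z_i^{a}$ (from Theorem \ref{thm:GQ_n}). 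Shifting the exponents $\lambda_i \mapsto \lambda_i+k$, $\lambda_j \mapsto \lambda_j+l$ during extraction, and using the identifications $\binom{-(r'-i-1)}{k} = \binom{i+1-r'}{k}$ and $\binom{-(r'-j)}{l} = \binom{j-r'}{l}$ of generalized binomial coefficients (immediate from the polynomial definition in the upper index), yields exactly $\rho_{i,j}$.

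I do not anticipate any serious obstacle: the proof is essentially coefficient bookkeeping once the Pfaffian identity \eqref{eq:alterm} and the generating functions from Theorems \ref{thm:GQ_n} and \ref{thm:main1} are in hand. The one point requiring care is the choice of ambient formal-series ring, since coefficient extraction is order-sensitive (Section \ref{sec:vacuum_ex}): the scalar prefactors $(1+\beta z_i^{-1})^{-m}$ live naturally in $k((z_i^{-1}))$, while $\Phi(z_i,z_j)$ lies in $G\Gamma((z_j^{-1}))((z_i^{-1}))$ by Lemma \ref{lemma:Phi_Pf}, so their products are unambiguously defined and the coefficient extraction is well-posed throughout.
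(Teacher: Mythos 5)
Your proposal is correct and follows essentially the same route as the paper: the paper's proof of Proposition \ref{prop:another_Pfaffian} is precisely to compare coefficients of $z_1^{\lambda_1}\cdots z_r^{\lambda_r}$ in the Pfaffian identity \eqref{eq:alterm}, with the entrywise extraction and the binomial expansion of $(1+\beta z^{-1})^{-m}$ left implicit. Your write-up merely makes explicit the bookkeeping (distribution of coefficient extraction over the Pfaffian, the identification $\binom{-(r'-i-1)}{k}=\binom{i+1-r'}{k}$, and the ambient Laurent-series rings) that the paper omits.
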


\section{The space of dual $K$-theoretic $Q$-cancellation property}\label{sec:dual1}

\subsection{Definition of $g\Gamma$}

In the latter part of this paper, we deal with a duality.
Let us consider the ``dual space'' of $G\Gamma$ characterized by the following \textit{dual $K$-theoretic $Q$-cancellation property}:
\begin{equation}\label{eq:dual_KQ_property}
f(t,-t-\beta,x_3,x_4,\dots) \mbox{ does not depend on $t$}.
\end{equation}
Let $g\Gamma$ denote the $k$-subspace of $\Lambda$\footnote{Not $\widehat{\Lambda}$.
This means that we need not to work on infinite series of symmetric functions here.} that consists of all symmetric functions satisfying \eqref{eq:dual_KQ_property}.
Obviously, $g\Gamma$ is a ring.


Let $p_n^{(\beta)}(x)=p_n(\frac{x}{1+\frac{\beta}{2}x})$ and $p_n^{[\beta]}(x)=p_n(x+\frac{\beta}{2})-p_n(\frac{\beta}{2})=\sum\limits_{i=1}^{n}{n\choose i}(\frac{\beta}{2})^ip_{i}(x)$.
Both $p_n^{(\beta)}(x)$ and $p_n^{[\beta]}(x)$ come back to $p_n(x)$ when we put $\beta=0$.
For any (not necessarily strict) partition $\lambda=(\lambda_1\geq \dots\geq \lambda_r>0)$, we write
\[
p_\lambda^{(\beta)}:=p_{\lambda_1}^{(\beta)}p_{\lambda_2}^{(\beta)}\dots p_{\lambda_r}^{(\beta)},\qquad
p_\lambda^{[\beta]}:=p_{\lambda_1}^{[\beta]}p_{\lambda_2}^{[\beta]}\dots p_{\lambda_r}^{[\beta]}.
\]

\begin{lemma}
As a $k$-algebra, $g\Gamma$ is generated by $p^{[\beta]}_1,p^{[\beta]}_3,p^{[\beta]}_5,\dots$.
\end{lemma}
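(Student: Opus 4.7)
The plan is to show both inclusions $k[p_1^{[\beta]}, p_3^{[\beta]}, p_5^{[\beta]}, \ldots] \subseteq g\Gamma$ and $g\Gamma \subseteq k[p_1^{[\beta]}, p_3^{[\beta]}, p_5^{[\beta]}, \ldots]$.  For the first, I will directly verify that each odd-indexed generator lies in $g\Gamma$.  Using $p_n^{[\beta]}(x) = \sum_i [(x_i + \tfrac{\beta}{2})^n - (\tfrac{\beta}{2})^n]$ and the change of variable $u = t + \tfrac{\beta}{2}$, the substitution $(x_1, x_2) = (t, -t-\beta)$ gives
\[ p_n^{[\beta]}(t, -t-\beta, x_3, x_4, \ldots) = u^n + (-u)^n - 2(\tfrac{\beta}{2})^n + p_n^{[\beta]}(x_3, x_4, \ldots), \]
whose $t$-dependent part vanishes precisely when $n$ is odd.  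Since $g\Gamma$ is a subring of $\Lambda$, this proves one inclusion.

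For the converse, I first note that $\{p_n^{[\beta]}\}_{n \geq 1}$ is an algebraically independent $k$-generating family for $\Lambda$: the expansion $p_n^{[\beta]} = p_n + \sum_{i=1}^{n-1} \binom{n}{i}(\tfrac{\beta}{2})^{n-i} p_i$ is unipotent triangular with respect to the filtration $\deg p_i = i$, so the change of basis from $\{p_n\}$ to $\{p_n^{[\beta]}\}$ is invertible.  Hence every $f \in \Lambda$ admits a unique expression $f = F(p_1^{[\beta]}, p_2^{[\beta]}, \ldots)$ with $F$ a polynomial, and the goal becomes: if $f \in g\Gamma$ then $F$ involves no $p_n^{[\beta]}$ with $n$ even.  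Setting $q_n := p_n^{[\beta]}(x_3, x_4, \ldots) - 2(\tfrac{\beta}{2})^n$ (still an algebraically independent family), the preceding calculation yields $f|_{x_1 = t,\, x_2 = -t - \beta} = F(q_1,\, q_2 + 2u^2,\, q_3,\, q_4 + 2u^4,\, \ldots)$, so the cancellation property $f \in g\Gamma$ translates into the polynomial identity
\[ F(q_1, q_2 + 2u^2, q_3, q_4 + 2u^4, \ldots) \;=\; F(q_1, q_2, q_3, q_4, \ldots) \quad \text{in}\quad k[u, q_1, q_2, \ldots]. \]

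The main technical obstacle is to deduce from this identity that $F$ depends only on its odd-indexed arguments.  I will Taylor-expand the left-hand side in $u$: the coefficient of $u^{2n}$ in the difference of the two sides is
\[ \sum_{\substack{\vec m \neq 0 \\ \sum_{j \geq 1} j\, m_{2j} = n}} \frac{2^{|\vec m|}}{\vec m!} \Bigl(\prod_{j \geq 1} \partial_{q_{2j}}^{m_{2j}}\Bigr) F, \]
which must vanish.  For $n = 1$ this gives $2\,\partial_{q_2} F = 0$.  By induction on $n$, any $\vec m$ in the sum above other than the one with $m_{2n} = 1$ and all other entries zero must have some $m_{2j} > 0$ with $j < n$, so its contribution vanishes by the inductive hypothesis; the remaining term yields $2\,\partial_{q_{2n}} F = 0$.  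Thus $\partial_{q_{2j}} F = 0$ for every $j$, so $f \in k[p_1^{[\beta]}, p_3^{[\beta]}, p_5^{[\beta]}, \ldots]$, completing the proof.
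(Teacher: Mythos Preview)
Your proof is correct and takes a genuinely different route from the paper's. The paper argues only the nontrivial inclusion $g\Gamma \subseteq k[p_1^{[\beta]}, p_3^{[\beta]}, \ldots]$ by a degeneration argument: given $f \in g\Gamma$, it breaks $f$ into pieces $f_i$ homogeneous for the grading $\deg(\beta^n p_\lambda) = n + |\lambda|$, observes that the leading term of each piece (in powers of $\beta$) satisfies the \emph{classical} $Q$-cancellation property and hence lies in $\Gamma_{\QQ} = \QQ[p_1, p_3, p_5, \ldots]$ by Lemma~\ref{lemma:basis_of_Gamma}, then subtracts off the corresponding $p_\lambda^{[\beta]}$ and repeats by induction on lexicographic order. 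Your argument instead works entirely in the deformed setting: you use that the $p_n^{[\beta]}$ form a polynomial generating set for $\Lambda$, make the shifted substitution $u = t + \tfrac{\beta}{2}$ to rewrite the cancellation condition as the polynomial identity $F(q_1, q_2 + 2u^2, q_3, q_4 + 2u^4, \ldots) = F(q_1, q_2, \ldots)$, and then extract $\partial_{q_{2n}} F = 0$ by Taylor-expanding in $u$ and an easy induction on $n$. Your approach is self-contained and does not appeal to the classical $\beta = 0$ result, while the paper's approach is a standard ``reduce to the undeformed case'' argument that leverages what is already known about $\Gamma_{\QQ}$. You also explicitly verify the easy inclusion $k[p_1^{[\beta]}, p_3^{[\beta]}, \ldots] \subseteq g\Gamma$, which the paper leaves implicit.
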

\begin{proof}
Let $f(x)$ be an element of $g\Gamma$.
We will prove that $f(x)$ is contained in $k[p^{[\beta]}_1,p^{[\beta]}_3,p^{[\beta]}_5,\dots]$.
Without loss of generality, we assume that $f(x)$ is of the form
\[
f(x)=\sum_{n=0}^N \sum_{\zet{\lambda}\leq M } c_{n,\lambda} \beta^n p_\lambda(x),\qquad c_{n,\lambda}\in \QQ
\]
for some $M,N$.
Let $f_i(x):=\sum_{n+\zet{\lambda}=i}c_{n,\lambda} \beta^n p_\lambda(x)$, then $f(x)$ is decomposed as $f(x)=f_0(x)+f_1(x)+\cdots+f_{M+N}(x)$.
Since each $f_i(t,-t-\beta,x_3,x_4,\dots)$ is a $\QQ$-linear combination of monomials 
\[
\beta^a t^b\cdot p_\lambda(x_3,x_4,\dots)
\quad\mbox{with}\quad 
\zet{\lambda}+a+b=i,
\]
we say that $f(x)\in g\Gamma$ is equivalent to $f_i(x)\in g\Gamma$ for all $i$.
We now will prove $f_i(x)\in g\Gamma$.
The $f_i(x)$ is expanded as
\[
f_i(x)=\beta^d (g_0(x)+\beta g_1(x)+\cdots),\quad g_i(x)\in \Lambda,\quad g_0(x)\neq 0.
\]
The highest term $g_0(x)=\sum_{\zet{\lambda}=i-d}c_{d,\lambda}p_\lambda(x)$ must be contained in $\Gamma_{\QQ}$ because the dual $K$-theoretic $Q$-cancellation property reduces to the original $Q$-cancellation property when $\beta=0$.
Then any partition $\lambda$ with $c_{d,\lambda}\neq 0$ is odd (Lemma \ref{lemma:basis_of_Gamma}).
Let $\lambda_0$ be the maximal $\lambda$ in the lexicographic order such that $c_{d,\lambda}\neq 0$.
The difference $f_i(x)-c_{d,\lambda_0} p_{\lambda_0}^{[\beta]}(x)\in g\Gamma$ is a $k$-linear combination of $p_\lambda(x)$ with $\lambda_0>\lambda$.
Repeating this procedure, we find that $f(x)$ is an element of $k[p^{[\beta]}_1,p^{[\beta]}_3,p^{[\beta]}_5,\dots]$.
\end{proof}

Since $p_1^{[\beta]},p_3^{[\beta]},p_5^{[\beta]},\dots$ are algebraically independent over $k$, the $g\Gamma$ is a $k$-vector space spanned by all $p^{[\beta]}_\lambda$ with $\lambda$ odd.

\subsection{$\beta$-deformed bilinear form}

To develop a duality theory for $K$-theoretic $Q$-functions, we define an appropriate \textit{$\beta$-deformed bilinear form}
\begin{equation}\label{eq:bilinar}
\langle
\cdot,\cdot
\rangle:G\Gamma\otimes_k
g\Gamma\to k,
\end{equation}
which generalizes the bilinear form $\Gamma_{\QQ}\otimes_{\QQ}\Gamma_{\QQ}\to \QQ $ in \S \ref{sec:Q-bilinear}.
For this, we use the following ``Cauchy kernel'' 
\begin{equation}\label{eq:master}
\prod_{i,j}\frac{1-\overline{x_i}y_j}{1-x_iy_j},\quad\mbox{where}\quad \overline{x_i}=\frac{-x_i}{1+\beta x_i}.
\end{equation}
The function \eqref{eq:master} was originally given in Nakagawa-Naruse's work~\cite[Definition 5.3]{nakagawa2016generalized} on geometry of the Grassmann variety of infinite rank.
When we put $\beta=0$, this comes back to $\prod_{i,j}\frac{1+x_iy_j}{1-x_iy_j}$, which was used to define the original bilinear form.
Importantly, \eqref{eq:master} admits the following expansion which beautifully connects the generators of $G\Gamma$ and $g\Gamma$:
\begin{align*}
\prod_{i,j}\frac{1-\overline{x_i}y_j}{1-x_iy_j}
&=
\prod_{i,j}
\left(
\frac{1+\frac{x_i}{1+\frac{\beta}{2}x_i}(y_j+\frac{\beta}{2}) }{1-\frac{x_i}{1+\frac{\beta}{2}x_i}(y_j+\frac{\beta}{2}) }
\cdot
\frac{1-\frac{x_i}{1+\frac{\beta}{2}x_i}\frac{\beta}{2}}
{1+\frac{x_i}{1+\frac{\beta}{2}x_i}\frac{\beta}{2}}
\right)
\\
&=
\exp\left(2\sum_{n=1,3,5,\dots}\frac{p_n(\frac{x}{1+\frac{\beta}{2}x}) 
\{p_n(y+\frac{\beta}{2})-p_n(\frac{\beta}{2})\}
}{n}\right)\\
&=
\prod_{n=1,3,5,\dots}
\sum_{k=0}^\infty
\frac{2^k}{k!n^k}\{p^{(\beta)}_n(x)\}^k\{p^{[\beta]}_n(y)\}^k\\
&=\sum_{\lambda : \mathrm{odd}}2^{\ell(\lambda)}z_\lambda^{-1}
p^{(\beta)}_\lambda(x)p^{[\beta]}_\lambda(y).
\end{align*}
By seeing this, we now define the $\beta$-deformed bilinear form \eqref{eq:bilinar} by
\begin{equation}\label{eq:defi_of_bilin}
\langle
p^{(\beta)}_\lambda,p^{[\beta]}_\mu
\rangle
=2^{-\ell(\lambda)}z_\lambda \delta_{\lambda,\mu}
\end{equation}
for $\lambda,\mu$ odd\footnote{
Rigorously, we first define the $k$-bilinear form $\mathcal{U}\otimes_k g\Gamma\to k$ by \eqref{eq:defi_of_bilin} and then extend to $G\Gamma\otimes_k g\Gamma$ continuously.
This method works because the map $\mathcal{U}\to k$; $f\mapsto \langle f,g\rangle$ is continuous on $\mathcal{U}$ for any $g\in g\Gamma$.
Here $k$ is equipped with the discrete topology.
}.


The master function \eqref{eq:master} is naturally derived from the fermionic setting.
In fact, from Lemma \ref{lemma:Hb_pm}, we obtain
\[
[\Hb(x),\Hb_{-}(y)]=
2\sum_{n=1,3,5,\dots}\frac{p_n(\frac{x}{1+\frac{\beta}{2}x}) 
\{p_n(y+\frac{\beta}{2})-p_n(\frac{\beta}{2})\}
}{n},
\]
which implies
\begin{equation}\label{eq:neutralfemion_vs_symmetricpolynomial}
\bra{0}e^{\Hb(x)}e^{\Hb_{-}(y)}\ket{0}
=\prod_{i,j}
\frac{1-\overline{x_i}y_j}{1-x_iy_j}.
\end{equation}
To relate this equation to the duality theory, we need the anti-isomorphism 
\begin{equation}\label{eq:anti}
\mathcal{A}\to \mathcal{A};\quad X\mapsto X^\ast
\end{equation}
such that $(\phi_n)^\ast=(-1)^n\phi_{-n}$ and $(XY)^\ast=Y^\ast X^\ast$.
This isomorphism is well-defined by virtue of the anti-commutation relation \eqref{eq:duality_original}.
By letting $\bra{0}^\ast =\ket{0}$ and $\ket{0}^\ast=\bra{0}$, the anti-isomorphism \eqref{eq:anti} induces the involution
\begin{equation}\label{eq:anti2}
\mathcal{E}\leftrightarrow \mathcal{F};\quad \bra{0}X\leftrightarrow X^\ast \ket{0}.
\end{equation}
\begin{lemma}\label{lemma:basic_duality}
Write $\bra{\lambda}:=\ket{\lambda}^\ast$ for a strict partition $\lambda$.
Then we have
$
\langle
\lambda |
\mu
\rangle
=2^{\ell(\lambda)}\delta_{\lambda,\mu}
$.
\end{lemma}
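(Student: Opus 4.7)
The plan is to compute both sides of the pairing explicitly using the anti-automorphism \eqref{eq:anti} and then apply Wick's theorem (Theorem \ref{thm:Wick}). First, applying $X \mapsto X^\ast$ to \eqref{eq:ket_lambda} and using $\phi_n^\ast = (-1)^n\phi_{-n}$ together with the reversal of products, I would obtain
\[
\bra{\lambda} = \begin{cases} (-1)^{|\lambda|}\bra{0}\phi_{-\lambda_r}\phi_{-\lambda_{r-1}}\cdots \phi_{-\lambda_1}, & r \text{ even},\\ (-1)^{|\lambda|}\bra{0}\phi_0\phi_{-\lambda_r}\phi_{-\lambda_{r-1}}\cdots \phi_{-\lambda_1}, & r \text{ odd}.\end{cases}
\]
Substituting into $\langle\lambda | \mu\rangle$ turns the pairing into a single vacuum expectation value of an even product of neutral fermions, to which Wick's theorem applies.

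The second step is to identify the nontrivial two-point functions. A short calculation from \eqref{eq:duality_original} shows that $\bra{0}\phi_m\phi_n\ket{0}$ vanishes unless $m+n=0$, in which case it equals $2(-1)^m$ for $m<0$ and $1$ for $m=n=0$. The indices appearing from $\bra{\lambda}$ are $\leq 0$ and those from $\ket{\mu}$ are $\geq 0$; hence in the Pfaffian from Wick's theorem, every pair contributing nontrivially must consist of one index from each side with matching absolute value. Because $\lambda$ and $\mu$ are strict, this forces the multisets $\{\lambda_i\}$ and $\{\mu_j\}$ to coincide, i.e.\ $\lambda=\mu$. In particular, in the mixed-parity case, one side contributes a lone $\phi_0$ which has no available partner, so the Pfaffian vanishes, consistent with $\delta_{\lambda,\mu}$.

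The third step is the evaluation when $\lambda = \mu$. Here only one perfect matching contributes, namely the ``anti-diagonal'' matching that pairs $-\lambda_i$ with $\lambda_i$ for each $i$ and (in the odd-length case) the two $\phi_0$'s with each other. The corresponding product of two-point values is
\[
\prod_{i=1}^{r}2(-1)^{\lambda_i} = 2^r(-1)^{|\lambda|},
\]
the extra factor $\bra{0}\phi_0^2\ket{0}=1$ in the odd-length case being harmless. The main obstacle is the sign of the associated Pfaffian permutation $\sigma=[1,\,2r,\,2,\,2r-1,\,3,\,2r-2,\ldots]$ (or its length-$(2r+2)$ analog). I would compute $\mathrm{sgn}(\sigma)$ directly by counting inversions position-by-position: odd positions contribute no inversions, while the $2k$-th position contributes $2r-2k$ inversions, giving the even total $r(r-1)$; hence $\mathrm{sgn}(\sigma)=+1$.

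Combining the three steps yields
\[
\langle\lambda|\mu\rangle=(-1)^{|\lambda|}\cdot \mathrm{sgn}(\sigma)\cdot 2^r(-1)^{|\lambda|}\cdot \delta_{\lambda,\mu}=2^{\ell(\lambda)}\delta_{\lambda,\mu},
\]
which is the claim. The only delicate point in the whole argument is the sign verification in the Pfaffian, which is a finite combinatorial check; everything else is a direct application of Wick's theorem to the explicit form of $\bra{\lambda}$.
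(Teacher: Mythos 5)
Your proof is correct and takes exactly the route the paper intends: the paper's own proof is just the one-line remark that the lemma ``can be proved directly by Wick's theorem,'' and your argument supplies the details that remark leaves implicit (the explicit form of $\bra{\lambda}$ under the anti-automorphism, the two-point functions, the forced anti-diagonal matching, and the sign count $r(r-1)$). All of these checks are accurate, including the vanishing in the mixed-parity case via the unpaired $\phi_0$.
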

\begin{proof}
This lemma can be proved directly by Wick's theorem \ref{thm:Wick}.
\end{proof}

Let $\dbb_n:=(\bb_{-n})^\ast$ and 
\[
\dHb:=(\Hb_{-})^\ast=2\sum_{n=1,3,5,\dots}\frac{p_n}{n}\dbb_{n},\qquad
\dHb_{-}:=(\Hb)^\ast=2\sum_{n=1,3,5,\dots}\frac{p_n}{n}\dbb_{-n}.
\]
Since $\mathcal{E}$ is spanned by the vectors of the form $\bra{\lambda}$ with $\lambda$ strict, we have the following expansions
\[
\bra{0}e^{\Hb(x)}=\sum_{\lambda:\mathrm{strict}} \bra{\lambda}u_\lambda(x),\qquad
\bra{0}e^{\dHb(x)}=\sum_{\lambda:\mathrm{strict}} \bra{\lambda}v_\lambda(x)
\]
for some symmetric functions $u_\lambda$ and $v_\lambda$.
By using Lemma \ref{lemma:basic_duality} and \eqref{eq:neutralfemion_vs_symmetricpolynomial}, we obtain the equation
\[
\sum_{\lambda:\mathrm{strict}} 
2^{\ell(\lambda)}u_\lambda(x)v_\lambda(y)=
\prod_{i,j}\frac{1-\overline{x_i}y_j}{1-x_iy_j},
\]
which implies
\[
\langle
u_\lambda,v_\mu
\rangle
=2^{-\ell(\lambda)}\delta_{\lambda,\mu}.
\]

\begin{proposition}\label{prop:bilin_and_expect}
Suppose $f=\bra{0}e^{\Hb(x)}\ket{v}$ and $g=\bra{0}e^{\dHb(x)}\ket{w}$.
Then we have
\[
\langle
f,g
\rangle
=
\langle
w|v
\rangle.
\]
\end{proposition}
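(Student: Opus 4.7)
The proof is a direct matrix-element computation built on the two orthogonality relations already obtained just before the statement. The plan is to evaluate both $\langle f,g\rangle$ and $\langle w|v\rangle$ by expanding everything in the ``strict partition basis'' of $\mathcal{F}$ (and its involution image in $\mathcal{E}$), and observe that the two expansions coincide coefficient by coefficient.

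First, I would apply the expansions
\[
\bra{0}e^{\Hb(x)}=\sum_{\lambda:\mathrm{strict}}\bra{\lambda}u_\lambda(x),\qquad
\bra{0}e^{\dHb(x)}=\sum_{\lambda:\mathrm{strict}}\bra{\lambda}v_\lambda(x),
\]
which have been derived in the excerpt, to obtain
\[
f=\sum_{\lambda}u_\lambda(x)\langle\lambda|v\rangle,\qquad
g=\sum_{\mu}v_\mu(x)\langle\mu|w\rangle.
\]
Using bilinearity of $\langle\cdot,\cdot\rangle:G\Gamma\otimes_k g\Gamma\to k$ together with the orthogonality $\langle u_\lambda,v_\mu\rangle=2^{-\ell(\lambda)}\delta_{\lambda,\mu}$ established right above the proposition, this gives
\[
\langle f,g\rangle=\sum_{\lambda}2^{-\ell(\lambda)}\langle\lambda|v\rangle\langle\lambda|w\rangle.
\]

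Next, I would compute $\langle w|v\rangle$ from the other direction. Expand $\ket{v}=\sum_{\lambda}c_\lambda\ket{\lambda}$ and $\ket{w}=\sum_{\mu}d_\mu\ket{\mu}$; Lemma \ref{lemma:basic_duality} inverts this to $c_\lambda=2^{-\ell(\lambda)}\langle\lambda|v\rangle$ and $d_\mu=2^{-\ell(\mu)}\langle\mu|w\rangle$. Passing to the involution \eqref{eq:anti2} gives $\bra{w}=\sum_\mu d_\mu\bra{\mu}$, so another application of Lemma \ref{lemma:basic_duality} yields
\[
\langle w|v\rangle=\sum_{\lambda,\mu}d_\mu c_\lambda\langle\mu|\lambda\rangle=\sum_\lambda 2^{\ell(\lambda)}c_\lambda d_\lambda=\sum_\lambda 2^{-\ell(\lambda)}\langle\lambda|v\rangle\langle\lambda|w\rangle,
\]
which matches the expression for $\langle f,g\rangle$ obtained above.

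The only genuine subtlety is that $\{\ket{\lambda}\}_{\lambda\text{ strict}}$ spans only the even-parity (even number of $\phi$'s) part of $\mathcal{F}$, so the expansion of $\ket{v}$ and $\ket{w}$ need not be complete. This is harmless because $\Hb$ and $\dHb$ are quadratic in the $\phi$'s, hence both $\bra{0}e^{\Hb(x)}$ and $\bra{0}e^{\dHb(x)}$ live in the even-parity part of $\mathcal{E}$; consequently the odd-parity components of $\ket{v}$ and $\ket{w}$ contribute $0$ to both $f$, $g$ and to $\langle w|v\rangle$, so restricting to the even part entails no loss of generality. Once this parity remark is recorded, the argument is just the bookkeeping of the two orthogonality relations, and no harder computation is needed.
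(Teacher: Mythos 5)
Your main computation is correct and follows essentially the same route as the paper: reduce everything to the two orthogonality relations $\langle\lambda|\mu\rangle=2^{\ell(\lambda)}\delta_{\lambda,\mu}$ and $\langle u_\lambda,v_\mu\rangle=2^{-\ell(\lambda)}\delta_{\lambda,\mu}$ and match the two sides in the $\ket{\lambda}$-basis (the paper simply says it suffices to take $\ket{v}=\ket{\lambda}$, $\ket{w}=\ket{\mu}$ and checks $\langle f,g\rangle=2^{\ell(\lambda)}\delta_{\lambda,\mu}=\langle\lambda|\mu\rangle$). A minor point you should record: since the expansion $\bra{0}e^{\Hb(x)}=\sum_\lambda\bra{\lambda}u_\lambda(x)$ is an infinite sum, interchanging it with $\langle\cdot,\cdot\rangle$ uses the continuity of the pairing mentioned in the footnote attached to \eqref{eq:defi_of_bilin}.

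The genuine problem is your closing parity remark, which contains a false claim. The odd-parity components of $\ket{v}$ and $\ket{w}$ do contribute to $\langle w|v\rangle$: only the even--odd cross terms vanish, while the odd--odd pairing survives. Concretely, take $\ket{v}=\ket{w}=\phi_0\ket{0}$. Then $\bra{w}=\bra{0}\phi_0$ and $\langle w|v\rangle=\bra{0}\phi_0^2\ket{0}=\langle 0|0\rangle=1$, whereas $f=\bra{0}e^{\Hb}\phi_0\ket{0}=\bra{0}\phi_0\ket{0}=0$ (commuting $e^{\Hb}$ past $\phi_0$ only produces terms $\phi_{-j}$ with $j>0$, which annihilate $\ket{0}$), and likewise $g=0$; hence $\langle f,g\rangle=0\neq 1$. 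So the identity cannot hold for arbitrary $\ket{v},\ket{w}\in\mathcal{F}$, and your ``no loss of generality'' step fails. The correct reading of the proposition --- and the one the paper implicitly proves and later uses --- restricts $\ket{v}$ and $\ket{w}$ to the span of the vectors $\ket{\lambda}$ ($\lambda$ strict), which by the convention \eqref{eq:ket_lambda} is exactly the even-parity part of $\mathcal{F}$. On that subspace your expansion argument is complete; you should state this hypothesis explicitly rather than attempt to remove it.
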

\begin{proof}
It suffices to prove the case when $\ket{v}=\ket{\lambda}$ and $\ket{w}=\ket{\mu}$ for strict partitions $\lambda$, $\mu$.
In this case, we have $f=2^{\ell(\lambda)}u_\lambda(x)$ and $g=2^{\ell(\mu)}v_\mu(x)$ and thus
$\langle
f,g
\rangle
=2^{\ell(\lambda)}\delta_{\lambda,\mu}
=\langle
\lambda|\mu
\rangle
$.
\end{proof}

\section{Neutral fermionic presentation of dual polynomials}\label{sec:dualbeta}

\subsection{Dual $\beta$-deformed neutral fermion $\dbphi_n$}
Let us introduce the ``dual'' $\beta$-deformed neutral fermion $\dbphi_n$ as
\begin{equation}\label{eq:dbphi}
\dbphi_n:=(-1)^n(\phi^{(-\beta)}_{-n})^\ast,
\end{equation}
which defines a $k$-linear map $\mathcal{F}\ni \ket{w}\mapsto \dbphi_n\ket{w}\in \mathcal{F}$.
Note that $\dbphi_n$ reduces to $\phi_n$ when $\beta=0$.

We can check that the formal series
$
\dbphi(z)=\sum_{n\in \ZZ}\dbphi_nz^n
$
defines a $k$-linear map $\mathcal{F}\to \mathcal{F}((z))$.
Let $(\dbphi)^\ast(z)=\sum_{n\in \ZZ}(\dbphi_n)^\ast z^{n}$ be the formal series obtained from $\dbphi(x)$ by the anti-involution \eqref{eq:anti2}.
Then, from \eqref{eq:dbphi}, we have
\begin{equation}\label{eq:dbphi_ast}
(\dbphi)^\ast(z)
=\phi^{(-\beta)}(-z^{-1})
=
\phi(-z^{-1}-\tfrac{\beta}{2})
\qquad \mbox{in}\quad \mathrm{Map}(\mathcal{E},\mathcal{E}((z))).
\end{equation}
Moreover, by using $\{\phi(z)\}^\ast=\phi(-z^{-1})$, we have
\begin{equation}
\dbphi(z)
=
\phi(\tfrac{z}{1+\frac{\beta}{2}z} )\qquad \mbox{in}\quad \mathrm{Map}(\mathcal{F},\mathcal{F}((z ))).
\end{equation}
\begin{proposition}\label{prop:quasi_duality}
For $m,n\geq 0$, we have
\[
[(\dbphi_m)^\ast,\bphi_n ]_+=
\begin{cases}
2, & m=n,\\
\beta,& m=n+1,\\
0, & \mbox{otherwise}.
\end{cases}
\]
\end{proposition}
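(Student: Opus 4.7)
Proof plan: My approach is to reduce the anti-commutator of $\beta$-deformed neutral fermions to a short, finite computation inside the original neutral-fermion algebra $\mathcal{A}$. First, applying the involution $(\cdot)^{\ast\ast}=\mathrm{id}$ to the definition \eqref{eq:dbphi} gives $(\dbphi_m)^\ast = (-1)^m\,\phi^{(-\beta)}_{-m}$, and the generating function \eqref{eq:beta_generating_function} (with $\beta$ replaced by $-\beta$) then yields the explicit expansions
\begin{align*}
(\dbphi_0)^\ast &= \sum_{k\geq 0}(-\tfrac{\beta}{2})^k\phi_k, \\
(\dbphi_m)^\ast &= (-1)^m\sum_{l=1}^{m}\binom{m-1}{m-l}(\tfrac{\beta}{2})^{m-l}\phi_{-l} \qquad (m\geq 1),
\end{align*}
together with $\bphi_n = \sum_{k\geq n}\binom{k}{n}(\tfrac{\beta}{2})^{k-n}\phi_k$ for $n\geq 0$.

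Next, I would substitute these into $[(\dbphi_m)^\ast,\bphi_n]_+$ and apply the original anti-commutation relation \eqref{eq:duality_original}, $[\phi_j,\phi_k]_+ = 2(-1)^j\delta_{j+k,0}$, termwise. The case $m=0$ is immediate: both factors then lie in $\mathrm{span}\{\phi_k : k\geq 0\}$, and the only surviving pairing is $\phi_0\phi_0+\phi_0\phi_0 = 2$, which requires $n=0$. When $m\geq 1$, only the cross-pairings $[\phi_{-l},\phi_k]_+$ with $k=l$ survive, and the calculation collapses to
\[
[(\dbphi_m)^\ast,\bphi_n]_+ \;=\; 2\bigl(\tfrac{\beta}{2}\bigr)^{m-n}\,S_{m,n},\qquad S_{m,n}:=\sum_{l=\max(n,1)}^{m}(-1)^{m-l}\binom{m-1}{m-l}\binom{l}{n}.
\]

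The remaining step is a closed-form evaluation of $S_{m,n}$. Substituting $j=m-l$ and extending the summation range to $0\leq j\leq m-1$ (the added terms vanish because $\binom{m-j}{n}=0$ whenever $0<m-j<n$), I would recognize the sum via the generating-function identity
\[
\sum_{j=0}^{m-1}(-1)^j\binom{m-1}{j}(1+x)^{m-j} \;=\; (1+x)^m\Bigl(\tfrac{x}{1+x}\Bigr)^{m-1} \;=\; x^{m-1}+x^m,
\]
so that $S_{m,n}=[x^n](x^{m-1}+x^m)=\delta_{n,m-1}+\delta_{n,m}$. Multiplying by $2(\beta/2)^{m-n}$ then reproduces exactly $2$ when $m=n$, $\beta$ when $m=n+1$, and $0$ otherwise.

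The main obstacle is keeping the boundary case $m=0$ consistent with the main computation, since the explicit shape of $(\dbphi_m)^\ast$ genuinely changes depending on whether $m$ is zero; fortunately that case is immediate by inspection, and every other pair $(m,n)$ is handled uniformly by the telescoping identity above.
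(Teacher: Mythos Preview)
Your argument is correct, and it differs from the paper's in a meaningful way. The paper works entirely at the level of generating functions: it sets $A(u)=\sum_{m\geq 0}(\dbphi_m)^\ast u^m$ and $B(v)=\sum_{n\geq 0}\bphi_n v^n$, recognises these via \eqref{eq:beta_generating_function} and \eqref{eq:dbphi_ast} as reparametrised $\phi$-series, and computes $[A(u),B(v)]_+=(2+\beta u)/(1-uv)$ in one stroke before reading off the coefficient of $u^mv^n$. You instead expand $(\dbphi_m)^\ast$ and $\bphi_n$ explicitly in the basis $\{\phi_k\}$, apply \eqref{eq:duality_original} termwise, and reduce everything to the finite binomial sum $S_{m,n}$, which you evaluate by the polynomial identity $(1+x)\,x^{m-1}=x^{m-1}+x^m$. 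The paper's route is shorter and avoids any case splitting (the boundary $m=0$ is absorbed automatically into the generating function), whereas your route is more elementary and never requires tracking which formal-series ring the computation lives in; both arrive at exactly the same answer.
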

\begin{proof}
Let $A(u)=\sum_{m=0}^\infty (\dbphi_m)^\ast u^m$ and $B(v)=\sum_{n=0}^\infty \bphi_n v^n$.
From \eqref{eq:beta_generating_function} and \eqref{eq:dbphi_ast}, we have 
\begin{align*}
[A(u),B(v)]_+
&=\sum_{m,n\geq 0}[\phi_{-m},\phi_n]_+\left(-u^{-1}-\frac{\beta}{2}\right)^{-m}\left(v+\frac{\beta}{2}\right)^n\\
&=\sum_{n=0}^{\infty}2(-1)^n\left(-u^{-1}-\frac{\beta}{2}\right)^{-n}\left(v+\frac{\beta}{2}\right)^n\\
&=\frac{2+\beta u}{1-uv}
=(2+\beta u)(1+uv+u^2v^2+\cdots)
\end{align*}
in the vector space $\mathrm{Hom}_k(\mathcal{E},\mathcal{E}[[u]][[v]])$, where $(-u^{-1}-\frac{\beta}{2})^{-1}$ is expanded as
\[
\left(-u^{-1}-\frac{\beta}{2}\right)^{-1}
=-\frac{u}{1+\frac{\beta}{2}u}
=-u+\frac{\beta}{2}u^2-\frac{\beta^2}{4}u^3+
\frac{\beta^3}{8}u^4-
\cdots.
\]
Comparing the coefficients of $u^mv^n$ on the both sides, we obtain the proposition.
\end{proof}

By using Corollary \ref{cor:e_Theta_bphi} and Proposition \ref{prop:quasi_duality}, we have
\begin{align*}
&[(\dbphi_m)^\ast ,e^{-\Theta}\bphi_ne^\Theta]_+
=\sum_{k=0}^\infty[(\dbphi_m)^\ast,(-\beta)^k\bphi_{n+k}]_+=
\begin{cases}
0, & m<n,\\
2, & m=n,\\
(-\beta)^{m-n}, & m>n.
\end{cases}
\end{align*}
(For the case when $m>n$, this equation is shown as below:
\begin{align*}
\sum_{k=0}^\infty[(\dbphi_m)^\ast,(-\beta)^k\bphi_{n+k}]_+
&=(-\beta)^{m-n-1} [(\dbphi_m)^\ast,\bphi_{m-1}]_++
(-\beta)^{m-n} [(\dbphi_m)^\ast,\bphi_{m}]_+\\
&=(-\beta)^{m-n-1}\cdot \beta +(-\beta)^{m-n}\cdot 2=(-\beta)^{m-n}.)
\end{align*}
Seeing this, we define
\[
I(m,n):=
\begin{cases}
0, & m<n,\\
1, & m=n=0,\\
2, & m=n>0,\\
(-\beta)^{m-n}, & m>n.
\end{cases}
\]
As is seen, $I(m,n)$ equals to $[(\dbphi_m)^\ast ,e^{-\Theta}\bphi_ne^\Theta]_+$ except for the case $m=n=0$.

The following proposition and lemma can be proved by similar arguments as those used to prove Proposition \ref{prop:eTheta_action_on_phi(beta)}.
\begin{proposition}\label{prop:eTheta_vs_dbphi}
We have
\[
e^{\Theta}(\dbphi)^\ast(z)e^{-\Theta}=(1+\beta z)^{-1}(\dbphi)^\ast(z)
\qquad \mbox{in}\quad \mathrm{Map}(\mathcal{E},\mathcal{E}((z))),
\]
which implies 
\[
e^{\Theta}(\dbphi_n)^\ast e^{-\Theta}
=
(\dbphi_n)^\ast-\beta (\dbphi_{n-1})^\ast
+\beta^2(\dbphi_{n-2})^\ast
-\cdots.
\]
\end{proposition}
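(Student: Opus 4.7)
The plan is to mimic the proof of Proposition \ref{prop:eTheta_action_on_phi(beta)} but applied to the alternative generating function for $(\dbphi)^\ast(z)$ given by \eqref{eq:dbphi_ast}. First I would write $(\dbphi)^\ast(z) = \phi(-z^{-1} - \tfrac{\beta}{2})$ in $\mathrm{Map}(\mathcal{E}, \mathcal{E}((z)))$, set $w = -z^{-1}-\tfrac{\beta}{2}$, and observe that the commutation relation $[b_m,\phi_n]=\phi_{n-m}$ from \eqref{eq:b_and_phi} translates at the level of the generating series $\phi(w)=\sum_n\phi_n w^n$ into
\[
[b_{-n}, (\dbphi)^\ast(z)] = w^{-n}(\dbphi)^\ast(z), \qquad n>0.
\]
Writing $\check{z} := w^{-1} = \dfrac{-z}{1+\frac{\beta}{2}z}$, this scalar eigenvalue is exactly what is needed for the standard identity
\[
e^{A}Xe^{-A} = X + [A,X] + \tfrac{1}{2!}[A,[A,X]] + \cdots
\]
to collapse to a single exponential.

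Next I would plug in the explicit form of $\Theta$. Since every summand of $\Theta$ acts on $(\dbphi)^\ast(z)$ by multiplication by the same scalar $(\tfrac{\beta}{2}\check{z})^n$ up to the factor $1/n$, iterated commutators just yield powers of that scalar, so
\[
e^{\Theta}(\dbphi)^\ast(z)e^{-\Theta}
= \exp\!\left\{2\!\!\sum_{n=1,3,5,\ldots}\!\!\frac{1}{n}\Bigl(\tfrac{\beta}{2}\check{z}\Bigr)^{n}\right\}(\dbphi)^\ast(z)
= \frac{1+\tfrac{\beta}{2}\check{z}}{1-\tfrac{\beta}{2}\check{z}}\,(\dbphi)^\ast(z),
\]
using the Taylor expansion $\log\tfrac{1+t}{1-t}=2(t+\tfrac{t^3}{3}+\tfrac{t^5}{5}+\cdots)$ exactly as in the proof of Proposition \ref{prop:eTheta_action_on_phi(beta)}. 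A direct substitution $\check{z}=-z/(1+\tfrac{\beta}{2}z)$ simplifies the numerator to $\tfrac{1}{1+\frac{\beta}{2}z}$ and the denominator to $\tfrac{1+\beta z}{1+\frac{\beta}{2}z}$, leaving the scalar $(1+\beta z)^{-1}$. This yields the first identity of the proposition. For the componentwise formula, I would expand $(1+\beta z)^{-1}=1-\beta z+\beta^2 z^2-\cdots$ in $k[[z]]$ and extract the coefficient of $z^n$ from $\sum_m e^{\Theta}(\dbphi_m)^\ast e^{-\Theta}z^{m}=(1+\beta z)^{-1}\sum_m (\dbphi_m)^\ast z^{m}$.

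The only real subtlety, and hence the main obstacle, is bookkeeping of the formal-series rings: one must check that $w^{-n}=\check{z}^n$ is a well-defined element of $k((z))$ (which it is, since $w^{-1}=-z(1+\tfrac{\beta}{2}z)^{-1}\in zk[[z]]$), so that the commutator identity lives in $\mathrm{Map}(\mathcal{E},\mathcal{E}((z)))$, and similarly that the rearrangement of the Taylor series giving $\tfrac{1+\beta\check{z}/2}{1-\beta\check{z}/2}=(1+\beta z)^{-1}$ is legitimate in $k[[z]]$. Once these formal-ring checks are in place, the computation is a transparent parallel of the one already carried out in Proposition \ref{prop:eTheta_action_on_phi(beta)}.
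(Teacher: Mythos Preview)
Your proposal is correct and follows essentially the same approach as the paper, which simply indicates that the argument parallels that of Proposition~\ref{prop:eTheta_action_on_phi(beta)}. Using \eqref{eq:dbphi_ast} to write $(\dbphi)^\ast(z)=\phi(-z^{-1}-\tfrac{\beta}{2})$, computing $[b_{-n},\phi(w)]=w^{-n}\phi(w)$, exponentiating, and collapsing via the Taylor series for $\log\tfrac{1+t}{1-t}$ is exactly the intended route; your attention to the formal-series ring $\mathcal{E}((z))$ (in particular that $\check{z}=w^{-1}\in zk[[z]]$) is the appropriate extra care here.
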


\begin{lemma}
We have
\[
[\dbb_n,\dbphi(z)]=\frac{z^n-\overline{z}^n}{2}\dbphi(z)
\]
and thus
\[
e^{\dHb}\dbphi(z)=\exp\left(
\sum_{n=1}^\infty\frac{p_n}{n}(z^n-\overline{z}^n)
\right)\dbphi(z)e^{\dHb}.
\]
\end{lemma}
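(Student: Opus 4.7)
The plan is to establish the two assertions in sequence, mirroring the strategy used in Proposition \ref{prop:bb_bphi} and Lemma \ref{lemma:e^Hphi^(beta)}: first prove the single-index commutator, then bootstrap to the exponential identity via \eqref{eq:formula_commutative}.

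For the commutator $[\dbb_n,\dbphi(z)]=\tfrac{1}{2}(z^n-\overline{z}^n)\dbphi(z)$, my preferred route is to apply the anti-involution $\ast:\mathcal{A}\to\mathcal{A}$ from \eqref{eq:anti} to Proposition \ref{prop:bb_bphi} at index $m=-n$ (with $n>0$). Since $(XY)^\ast=Y^\ast X^\ast$ yields $[A,B]^\ast=-[A^\ast,B^\ast]$, the relation
\[
[\bb_{-n},\bphi(z)]=\tfrac{1}{2}(z^{-n}-(-z-\beta)^{-n})\bphi(z)
\]
dualizes to $[\dbb_n,\bphi(z)^\ast]=\tfrac{1}{2}((-z-\beta)^{-n}-z^{-n})\bphi(z)^\ast$. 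The identity $\phi_n^\ast=(-1)^n\phi_{-n}$ gives $\phi(z)^\ast=\phi(-z^{-1})$, whence $\bphi(z)^\ast=\phi(-(z+\beta/2)^{-1})$. Solving $w/(1+(\beta/2)w)=-(z+\beta/2)^{-1}$ gives the M\"obius change of variable $w=-(z+\beta)^{-1}$, under which $\bphi(z)^\ast=\dbphi(w)$. Substituting $z=-w^{-1}-\beta$ converts $z^{-n}$ into $\overline{w}^n$ (since $\overline{w}=-w/(1+\beta w)$) and $(-z-\beta)^{-n}$ into $w^n$, so the right-hand side becomes $\tfrac{1}{2}(w^n-\overline{w}^n)\dbphi(w)$; renaming $w\mapsto z$ completes the step. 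As a sanity check, the same identity can be obtained directly by expanding $\dbb_n=(\bb_{-n})^\ast$ in the basis $\{b_k:k\text{ odd}\}$ via \eqref{eq:def_of_bb} together with $(b_{-k})^\ast=b_k$ for $k$ odd, applying $[b_k,\dbphi(z)]=u^k\dbphi(z)$ with $u=z/(1+(\beta/2)z)$, and collapsing the resulting series by the generalized binomial theorem---the parity condition $n+k\equiv 1\pmod 2$ is exactly what cuts the sum down to $\tfrac{1}{2}(z^n-\overline{z}^n)$.

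For the exponential identity, since $\dHb=2\sum_{n\geq 1}\tfrac{p_n}{n}\dbb_n$, the commutator just proved gives
\[
[\dHb,\dbphi(z)]=\left(\sum_{n=1}^\infty \tfrac{p_n}{n}(z^n-\overline{z}^n)\right)\dbphi(z).
\]
Because the scalar factor is a formal series in the commutative indeterminates $p_n$ and $z$, it commutes with both $\dHb$ and $\dbphi(z)$, so the hypotheses of \eqref{eq:formula_commutative} are satisfied; iterated commutators of $\dHb$ with $\dbphi(z)$ are scalar multiples of $\dbphi(z)$, and expanding $e^{\dHb}\dbphi(z)e^{-\dHb}$ as a power series in $\dHb$ then delivers the claimed exponential identity.

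The main obstacle is the M\"obius bookkeeping in Step 1: verifying cleanly that conjugation by $\ast$ intertwines the ``bar'' involution $t\mapsto -t-\beta$ appearing on the original side with the dual bar operation $t\mapsto -t/(1+\beta t)$. Once this intertwining is in hand---which is the genuinely new input compared to Proposition \ref{prop:bb_bphi}---both assertions reduce formally to their already-established counterparts for $\bb_m$ and $\Hb$.
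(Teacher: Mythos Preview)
Your proposal is correct. The paper does not spell out a proof, merely pointing to the argument of Proposition~\ref{prop:eTheta_action_on_phi(beta)}; unpacked, that means the direct computation you outline as your ``sanity check'': use $\dbphi(z)=\phi(u)$ with $u=z/(1+\tfrac{\beta}{2}z)$ to get $[b_k,\dbphi(z)]=u^k\dbphi(z)$, substitute into the expansion of $\dbb_n$ in the odd $b_k$, and collapse the binomial sum to $\tfrac12(z^n-\overline{z}^n)$. Your primary route via the anti-involution $\ast$ applied to Proposition~\ref{prop:bb_bphi} at $m=-n$, followed by the M\"obius change $w=-(z+\beta)^{-1}$, is a genuine alternative: it trades the binomial bookkeeping for the verification that $\ast$ intertwines the two bar operations $t\mapsto -t-\beta$ and $t\mapsto \overline t=-t/(1+\beta t)$, making the $(\beta)\leftrightarrow[\beta]$ duality structurally explicit. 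Both routes are equally short and feed into \eqref{eq:formula_commutative} for the exponential identity in the same way.
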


\subsection{Calculation of the expectation value ${}^g\langle \mu\,\vert\,\lambda \rangle^G$}

For a string of non-negative integers $n_1,\dots,n_r$, we write 
\begin{align*}
&{}^g\bra{n_1,\dots,n_r}:
=
\bra{0}
(\dbphi_{n_r})^\ast e^{-\Theta}
\cdots e^{-\Theta}
(\dbphi_{n_2})^\ast e^{-\Theta}
(\dbphi_{n_1})^\ast e^{-\Theta},\\
&\ket{n_1,\dots,n_r}^G:
=
\bphi_{n_1}e^{\Theta}
\bphi_{n_2}e^{\Theta}
\cdots
\bphi_{n_r}e^{\Theta}\ket{0}.
\end{align*}
In this section, we calculate the vacuum expectation value
\begin{equation}\label{eq:inner}
{}^g\langle \mu_1,\mu_2,\dots,\mu_r|\lambda_1,\lambda_2,\dots,\lambda_s\rangle^G
\end{equation}
for strict partitions $\lambda=(\lambda_1>\dots>\lambda_s>0)$ and $\mu=(\mu_1>\dots>\mu_r>0)$.
This gives a $\beta$-deformation of Lemma \ref{lemma:basic_duality}.

Simplest cases are given as follows:
\begin{lemma}\label{eq:basic}
We have
\begin{enumerate}
\def\labelenumi{(\roman{enumi})}
\item\label{eq:basic_1} ${}^g\langle \emptyset |\emptyset \rangle^G=1$,
\item\label{eq:basic_2} ${}^g\langle 0 | 0 \rangle^G=1$.
\end{enumerate}
\end{lemma}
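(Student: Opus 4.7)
The strategy is to reduce both statements to the vacuum normalization $\langle 0\vert 0\rangle = 1$, using $\bra{0}e^{\pm\Theta} = \bra{0}$ to dispose of the $\Theta$-exponentials and then contracting the remaining neutral fermions against the vacuum via the anti-commutation relations of the original $\phi_n$'s.

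Statement (i) is essentially a matter of convention: when $r=0$, the defining products are empty, giving ${}^g\bra{\emptyset} = \bra{0}$ and $\ket{\emptyset}^G = \ket{0}$, so ${}^g\langle\emptyset\vert\emptyset\rangle^G = \langle 0\vert 0\rangle = 1$ by axiom (i) of the vacuum expectation value.

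For (ii), the first step is to move $e^{-\Theta}$ to the left of $(\dbphi_0)^\ast$. Writing $\bra{0} = \bra{0}e^\Theta$ and invoking Proposition \ref{prop:eTheta_vs_dbphi},
\[
\bra{0}(\dbphi_0)^\ast e^{-\Theta} = \bra{0}\bigl(e^\Theta (\dbphi_0)^\ast e^{-\Theta}\bigr) = \bra{0}\sum_{k\geq 0}(-\beta)^k (\dbphi_{-k})^\ast.
\]
For $k > 0$, the operator $(\dbphi_{-k})^\ast = (-1)^k \phi^{(-\beta)}_k$ expands (via \eqref{eq:beta_generating_function} with $\beta \mapsto -\beta$) as a linear combination of $\phi_m$ with $m \geq k > 0$, each of which annihilates $\bra{0}$. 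Hence only the $k = 0$ term survives, yielding $\bra{0}(\dbphi_0)^\ast e^{-\Theta} = \bra{0}\phi^{(-\beta)}_0 = \bra{0}\phi_0$ (the higher-mode terms again annihilate). Next, using $\bphi_0 = \sum_{n\geq 0}(\beta/2)^n\phi_n$ together with $[\phi_0,\phi_n]_+ = 2\delta_{n,0}$ and $\bra{0}\phi_n = 0$ for $n > 0$, I compute $\bra{0}\phi_0\bphi_0 = \bra{0}\phi_0^2 = \bra{0}$. The remaining expression is $\bra{0}e^\Theta\ket{0} = \langle 0\vert 0\rangle = 1$.

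The only step that requires some care is the annihilation argument used to prune the infinite series from Proposition \ref{prop:eTheta_vs_dbphi}: one must identify explicitly which modes of the original $\phi$'s the operators $(\dbphi_{-k})^\ast$ involve, so as to see that all $k>0$ terms vanish on $\bra{0}$. Once that bookkeeping is settled, the rest of the computation is an immediate consequence of $\phi_0^2 = 1$.
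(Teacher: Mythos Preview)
Your proof is correct and follows essentially the same strategy as the paper's. The only difference is the direction in which the $e^{\pm\Theta}$ are moved: the paper conjugates the right-hand operator instead, using Corollary~\ref{cor:e_Theta_bphi} to write $e^{-\Theta}\bphi_0 e^{\Theta}=\bphi_0-\beta\bphi_1+\cdots$ and then reducing $\bra{0}(\dbphi_0)^\ast(\bphi_0-\beta\bphi_1+\cdots)\ket{0}$ to $\bra{0}(\phi_0^2+\cdots)\ket{0}=1$, whereas you push $e^{-\Theta}$ to the left past $(\dbphi_0)^\ast$ via Proposition~\ref{prop:eTheta_vs_dbphi} before making the same reduction.
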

\begin{proof}
Eq \eqref{eq:basic_1} is obvious.
Eq \eqref{eq:basic_2} is given as follows:
\begin{align*}
{}^g\langle 0 | 0 \rangle^G&=
\bra{0}(\dbphi_0)^\ast e^{-\Theta}\bphi_0e^\Theta \ket{0}\\
&=
\bra{0} (\dbphi_0)^\ast(\bphi_0-\beta \bphi_1+\cdots)\ket{0}
\hspace{3em}(\mbox{By Corollary \ref{cor:e_Theta_bphi}})
\\
&=\bra{0}(\phi_0^2+\cdots)\ket{0}=1.
\end{align*}
\end{proof}

To calculate the general value \eqref{eq:inner}, we need the following three technical Lemmas \ref{lemma:tech1}--\ref{lemma:tech3}.

\begin{lemma}\label{lemma:tech1}
If $N>\mu_1>\mu_2>\dots>\mu_r\geq 0$, then ${}^g\bra{\mu_1,\mu_2,\dots,\mu_r}\bphi_N=0$.
\end{lemma}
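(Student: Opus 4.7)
The plan is to push $\bphi_N$ leftward through the chain $(\dbphi_{\mu_r})^\ast e^{-\Theta}\cdots(\dbphi_{\mu_1})^\ast e^{-\Theta}$, using Corollary \ref{cor:e_Theta_bphi} to commute it past each $e^{-\Theta}$ and Proposition \ref{prop:quasi_duality} to anticommute it past each $(\dbphi_{\mu_i})^\ast$. Once transported all the way to the left, the conjugated version of $\bphi_N$ will be a $\QQ[\beta]$-linear combination of $\bphi_{N+k}$'s with $k\geq 0$, each of which kills $\bra{0}$ because $N+k\geq 1$.

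There are two key inputs. First, setting $\psi^{(m)}_N:=e^{-m\Theta}\bphi_Ne^{m\Theta}$, iterating Corollary \ref{cor:e_Theta_bphi} gives
\begin{equation*}
\psi^{(m)}_N=\sum_{k\geq 0}c_{m,k}\,\bphi_{N+k},\qquad c_{m,k}\in\QQ[\beta];
\end{equation*}
equivalently, this is the $z^N$-coefficient of $(1+\beta z^{-1})^{-m}\bphi(z)$ read off from Proposition \ref{prop:eTheta_action_on_phi(beta)}. Second, for any $\mu<N$ and any $k\geq 0$, neither $\mu=N+k$ nor $\mu=N+k+1$ holds, so Proposition \ref{prop:quasi_duality} gives $[(\dbphi_\mu)^\ast,\bphi_{N+k}]_+=0$; by linearity, $(\dbphi_\mu)^\ast$ anticommutes with $\psi^{(m)}_N$ whenever $\mu<N$.

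Combining these, induction on $i$ yields
\begin{equation*}
(\dbphi_{\mu_i})^\ast e^{-\Theta}\cdots(\dbphi_{\mu_1})^\ast e^{-\Theta}\bphi_N=(-1)^i\,\psi^{(i)}_N\,(\dbphi_{\mu_i})^\ast e^{-\Theta}\cdots(\dbphi_{\mu_1})^\ast e^{-\Theta},
\end{equation*}
each step using $e^{-\Theta}\psi^{(m)}_N=\psi^{(m+1)}_N e^{-\Theta}$ followed by anticommutation past the next $(\dbphi_{\mu_i})^\ast$ (allowed because $\mu_i\leq\mu_1<N$). Taking $i=r$ and prepending $\bra{0}$ reduces the lemma to $\bra{0}\psi^{(r)}_N=0$, which holds termwise since each $\bra{0}\bphi_{N+k}$ lands in $\mathcal{E}_{N+k}$ by Lemma \ref{lemma:gradeing_of_bphi_on_E}, and $\mathcal{E}_s=0$ for $s>0$ (every $N+k\geq 1$). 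The only mild technicality is to check that the infinite series $\psi^{(m)}_N$ acts unambiguously on $\mathcal{E}$, but this is immediate from the gradation: a fixed covector in some $\mathcal{E}_s$ is killed by all but finitely many summands. There is no substantive obstacle beyond careful sign bookkeeping.
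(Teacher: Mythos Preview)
Your proof is correct and follows essentially the same strategy as the paper: push $\bphi_N$ leftward using the anticommutation from Proposition~\ref{prop:quasi_duality} (which vanishes because all indices $\mu_i<N$), handle the intervening $e^{-\Theta}$'s via the $\Theta$-conjugation formulas, and finish with $\bra{0}\bphi_M=0$ for $M>0$. The only cosmetic difference is that the paper absorbs the $e^{-\Theta}$'s into the $(\dbphi_{\mu_i})^\ast$'s (via Proposition~\ref{prop:eTheta_vs_dbphi}, lowering their indices) to rewrite ${}^g\bra{\mu}$ as a combination of $\bra{0}(\dbphi_{n_r})^\ast\cdots(\dbphi_{n_1})^\ast$ with $n_1<N$, whereas you absorb them into $\bphi_N$ (via Corollary~\ref{cor:e_Theta_bphi}, raising its index); either way the anticommutators vanish for the same reason.
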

\begin{proof}
By Proposition \ref{prop:eTheta_vs_dbphi} and $\bra{0}e^\Theta=\bra{0}$, the vector ${}^g\bra{\mu_1,\dots,\mu_r}$ should be a linear combination of vectors of the form
\[
\bra{0}(\dbphi_{n_r})^\ast\cdots (\dbphi_{n_2})^\ast(\dbphi_{n_1})^\ast,\quad
(N>n_1>n_2>\dots>n_r\geq 0).
\]
Noting $\bra{0}\bphi_N=0$ and $[(\dbphi_{n_r})^\ast\cdots (\dbphi_{n_2})^\ast(\dbphi_{n_1})^\ast,\bphi_N]_+=0$, which is shown from Proposition \ref{prop:quasi_duality}, we obtain ${}^g\bra{\mu_1,\mu_2,\dots,\mu_r}\bphi_N=0$.
\end{proof}

\begin{lemma}\label{lemma:tech2}
If $M>\lambda_1>\dots>\lambda_r\geq 0$, then $(\dbphi_{M+1})^\ast\ket{\lambda_1,\dots,\lambda_r}^G=0$.
\end{lemma}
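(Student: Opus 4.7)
The plan is to prove Lemma \ref{lemma:tech2} by induction on $r$, exactly paralleling the argument for Lemma \ref{lemma:tech1} but dualized: instead of moving $\bphi_N$ to the \emph{left} through ${}^g\bra{\mu_1,\dots,\mu_r}$ using Proposition \ref{prop:eTheta_vs_dbphi} and Proposition \ref{prop:quasi_duality}, we move $(\dbphi_{M+1})^{\ast}$ to the \emph{right} through $\ket{\lambda_1,\dots,\lambda_r}^G$ using the same two propositions.

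For the base case $r=0$ we must show $(\dbphi_n)^{\ast}\ket{0}=0$ for every $n\geq 1$. By the definition \eqref{eq:dbphi}, $(\dbphi_n)^{\ast}=(-1)^n\phi^{(-\beta)}_{-n}$, and from \eqref{eq:beta_generating_function} the operator $\phi^{(-\beta)}_{-n}$ with $n>0$ is a finite $k$-linear combination of $\phi_{-1},\phi_{-2},\dots,\phi_{-n}$, each of which annihilates $\ket{0}$.

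For the inductive step, assume the statement for shorter partitions. The two key commutation facts are: (i) by Proposition \ref{prop:quasi_duality}, since $M+1>\lambda_1+1$ so that $M+1\neq \lambda_1$ and $M+1\neq \lambda_1+1$, we have $[(\dbphi_{M+1})^{\ast},\bphi_{\lambda_1}]_+=0$, hence $(\dbphi_{M+1})^{\ast}\bphi_{\lambda_1}=-\bphi_{\lambda_1}(\dbphi_{M+1})^{\ast}$; (ii) Proposition \ref{prop:eTheta_vs_dbphi}, i.e., $e^{\Theta}(\dbphi)^{\ast}(z)e^{-\Theta}=(1+\beta z)^{-1}(\dbphi)^{\ast}(z)$, is equivalent to $(\dbphi)^{\ast}(z)e^{\Theta}=(1+\beta z)e^{\Theta}(\dbphi)^{\ast}(z)$, and taking the coefficient of $z^{M+1}$ yields
\[
(\dbphi_{M+1})^{\ast}e^{\Theta}=e^{\Theta}\bigl[(\dbphi_{M+1})^{\ast}+\beta (\dbphi_{M})^{\ast}\bigr].
\]
Combining (i) and (ii),
\[
(\dbphi_{M+1})^{\ast}\ket{\lambda_1,\dots,\lambda_r}^G
=-\bphi_{\lambda_1}e^{\Theta}(\dbphi_{M+1})^{\ast}\ket{\lambda_2,\dots,\lambda_r}^G
-\beta\,\bphi_{\lambda_1}e^{\Theta}(\dbphi_{M})^{\ast}\ket{\lambda_2,\dots,\lambda_r}^G.
\]

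It remains to observe that both terms on the right vanish by induction. The first vanishes since $M>\lambda_2$ (as $M>\lambda_1>\lambda_2$), so the inductive hypothesis applies verbatim to the string $\lambda_2>\dots>\lambda_r$. For the second term, we apply the inductive hypothesis with $M$ replaced by $M-1$: we need $M-1>\lambda_2$, which follows from $M\geq\lambda_1+1\geq\lambda_2+2$. I do not anticipate a real obstacle here; the only subtle points are correctly deriving the direction of the $e^{\Theta}$-commutation in (ii) from Proposition \ref{prop:eTheta_vs_dbphi} (which is a matter of inverting a geometric series in $\beta z$) and keeping careful track of the strict-inequality gap $M\geq\lambda_1+2$ that is needed so that the shift from $(\dbphi_{M+1})^{\ast}$ down to $(\dbphi_M)^{\ast}$ still satisfies the hypothesis of the inductive statement.
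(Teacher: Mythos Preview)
Your proof is correct. The approach is essentially the same as the paper's---push $(\dbphi_{M+1})^{\ast}$ to the right using Proposition~\ref{prop:quasi_duality} to anticommute past each $\bphi_{\lambda_i}$ and Proposition~\ref{prop:eTheta_vs_dbphi} to pass through each $e^{\Theta}$---but the bookkeeping differs. The paper first uses Corollary~\ref{cor:e_Theta_bphi} to rewrite $\ket{\lambda_1,\dots,\lambda_r}^G$ as a combination of vectors $\bphi_{n_1}\cdots\bphi_{n_r}e^{r\Theta}\ket{0}$ with all the $e^{\Theta}$'s collected on the right, and then moves $(\dbphi_{M+1})^{\ast}$ through everything in a single step, producing the finite tail $(\dbphi_{M+1})^{\ast}+r\beta(\dbphi_M)^{\ast}+\cdots+\beta^r(\dbphi_{M+1-r})^{\ast}$, which annihilates $\ket{0}$ because $M\geq r$. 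Your induction peels off one factor $\bphi_{\lambda_1}e^{\Theta}$ at a time; this avoids the preliminary rewriting and makes the role of the strict gap $\lambda_i>\lambda_{i+1}$ more transparent, at the cost of a two-branch recursion. One small slip: in your closing remark you write ``$M\geq\lambda_1+2$'', but what you actually need (and correctly derive a few lines earlier) is $M\geq\lambda_2+2$, which follows from $M\geq\lambda_1+1\geq\lambda_2+2$.
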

\begin{proof}
By Proposition \ref{prop:eTheta_action_on_phi(beta)}, the vector $\ket{\lambda_1,\dots,\lambda_r}^G$ is a linear combination of vectors of the form
\[
\bphi_{n_1}\bphi_{n_2}\dots \bphi_{n_r}e^{r\Theta}\ket{0},\quad
(M>n_1\geq n_2\geq \dots \geq n_r\geq 0).
\]
Noting $M\geq r$ and $(\dbphi_n)^\ast\ket{0}=0$ for $n>0$, we have
\begin{align*}
&(\dbphi_{M+1})^\ast \bphi_{n_1}\bphi_{n_2}\dots \bphi_{n_r}e^{r\Theta}\ket{0}\\
&=
(-1)^r \bphi_{n_1}\bphi_{n_2}\dots \bphi_{n_r}(\dbphi_{M+1})^\ast e^{r\Theta}\ket{0}\\
&=
(-1)^r \bphi_{n_1}\bphi_{n_2}\dots \bphi_{n_r}
e^{r\Theta}((\dbphi_{M+1})^\ast +r\beta(\dbphi_{M})^\ast+\cdots +\beta ^r(\dbphi_{M-r+1})^\ast)\ket{0}\\
&\hspace{23em}
(\mbox{By Proposition \ref{prop:eTheta_vs_dbphi}})
\\
&=0.
\end{align*}
\end{proof}

\begin{lemma}\label{lemma:tech3}
If $\mu_1>\dots>\mu_r\geq  0$ and $\lambda_1>\dots>\lambda_s\geq 0$, we have
\begin{equation}\label{eq:desired}
{}^g\langle
\mu_1,\mu_2,\dots,\mu_r|
\lambda_1,\lambda_2,\dots,\lambda_s
\rangle^G
=I(\mu_1,\lambda_1)\cdot
{}^g\langle
\mu_2,\dots,\mu_r|
\lambda_2,\dots,\lambda_s
\rangle^G.
\end{equation}
\end{lemma}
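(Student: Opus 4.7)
The plan is to move the rightmost factor $(\dbphi_{\mu_1})^\ast$ of the bra past the leftmost site $e^{-\Theta}\bphi_{\lambda_1}e^\Theta$ of the ket by means of the anticommutator identity derived immediately before the definition of $I(m,n)$: provided $(\mu_1, \lambda_1) \neq (0, 0)$,
\[
(\dbphi_{\mu_1})^\ast\bigl(e^{-\Theta}\bphi_{\lambda_1}e^\Theta\bigr) = I(\mu_1, \lambda_1) - \bigl(e^{-\Theta}\bphi_{\lambda_1}e^\Theta\bigr)(\dbphi_{\mu_1})^\ast.
\]
Inserting this into ${}^g\langle\mu_1,\dots,\mu_r\,|\,\lambda_1,\dots,\lambda_s\rangle^G$ splits it into a scalar piece and a residual. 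In the scalar piece, the adjacent $e^{-\Theta}e^\Theta$ that becomes visible in the middle collapses to the identity and the remaining factors reassemble exactly into $I(\mu_1, \lambda_1)\cdot{}^g\langle \mu_2, \dots, \mu_r\,|\,\lambda_2, \dots, \lambda_s\rangle^G$, which is the target right-hand side of \eqref{eq:desired}.

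The heart of the proof is to verify that the residual
\[
T_2 = -\,{}^g\bra{\mu_2, \dots, \mu_r}\cdot e^{-\Theta}\bphi_{\lambda_1}e^\Theta\cdot(\dbphi_{\mu_1})^\ast\cdot|\lambda_2, \dots, \lambda_s\rangle^G
\]
vanishes. Using Corollary \ref{cor:e_Theta_bphi}, I expand $e^{-\Theta}\bphi_{\lambda_1}e^\Theta = \sum_{k\geq 0}(-\beta)^k\bphi_{\lambda_1+k}$ and split into two complementary cases. If $\lambda_1 > \mu_2$ (vacuously so when $r = 1$, using the convention ${}^g\bra{\,} = \bra{0}$ together with $\bra{0}\bphi_N = 0$ for $N \geq 1$), then $\lambda_1 + k > \mu_2$ for every $k \geq 0$, and Lemma \ref{lemma:tech1} yields ${}^g\bra{\mu_2, \dots, \mu_r}\bphi_{\lambda_1+k} = 0$ for each $k$. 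Otherwise $\lambda_1 \leq \mu_2$, and combining this with the strictness inequalities $\mu_1 > \mu_2$ and $\lambda_1 > \lambda_2$ forces the chain $\mu_1 \geq \mu_2 + 1 \geq \lambda_1 + 1 \geq \lambda_2 + 2$; Lemma \ref{lemma:tech2} then gives $(\dbphi_{\mu_1})^\ast|\lambda_2, \dots, \lambda_s\rangle^G = 0$. In either situation $T_2 = 0$.

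Small boundary checks handle the degenerate configurations: when $s = 1$ the ket reduces to $\ket{0}$ and $(\dbphi_{\mu_1})^\ast\ket{0} = 0$ whenever $\mu_1 \geq 1$, subsuming the second case there; the strictness of $\lambda$ forces $s = 1$ whenever $\lambda_1 = 0$, so no further subcase arises. The only configuration in which the master anticommutator identity itself fails is $(\mu_1, \lambda_1) = (0, 0)$, but strictness then collapses both partitions to $(0)$ and the claim reduces directly to part (ii) of Lemma \ref{eq:basic}. I expect the main obstacle to be recognizing the correct dichotomy between Lemmas \ref{lemma:tech1} and \ref{lemma:tech2}; once one notices that the strictness of both $\mu$ and $\lambda$ supplies the arithmetic inequality $\mu_1 \geq \lambda_2 + 2$ in the case $\lambda_1 \leq \mu_2$, the argument closes cleanly.
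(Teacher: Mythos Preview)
Your proof is correct and matches the paper's approach: anticommute $(\dbphi_{\mu_1})^\ast$ past $e^{-\Theta}\bphi_{\lambda_1}e^\Theta$, then kill the residual via Lemmas~\ref{lemma:tech1} and~\ref{lemma:tech2}. The paper splits the residual on $\lambda_1\geq\mu_1$ versus $\lambda_1<\mu_1$ rather than your $\lambda_1>\mu_2$ versus $\lambda_1\leq\mu_2$, which sidesteps most of the boundary bookkeeping you handle separately; incidentally, no ``$e^{-\Theta}e^\Theta$ collapse'' actually occurs in the scalar piece---once $(\dbphi_{\mu_1})^\ast$ and the block $e^{-\Theta}\bphi_{\lambda_1}e^\Theta$ are removed, the remaining factors already read as ${}^g\langle\mu_2,\dots,\mu_r\,|\,\lambda_2,\dots,\lambda_s\rangle^G$ with no surplus exponentials.
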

\begin{proof}
If $\mu_1=0$ and $\lambda_1=0$ (and $r=s=1$ automatically), it follows that
${}^g\langle
0
|
0
\rangle^G
=I(0,0)\cdot
{}^g\langle
\emptyset|
\emptyset
\rangle^G
$
from Lemma \ref{eq:basic}.
Thus \eqref{eq:desired} is true in this case.

Suppose $\mu_1>0$ or $\lambda_1>0$.
In this case, we have
\begin{align*}
&{}^g\langle
\mu_1,\mu_2,\dots,\mu_r|
\lambda_1,\lambda_2,\dots,\lambda_s
\rangle^G\\
&=
{}^g\bra{\mu_2,\dots,\mu_r}(\dbphi_{\mu_1})^\ast e^{-\Theta}\bphi_{\lambda_1}e^{\Theta}
\ket{\lambda_2,\dots,\lambda_s}^G\\
&=
{}^g\bra{\mu_2,\dots,\mu_r}
\left(
[(\dbphi_{\mu_1})^\ast, e^{-\Theta}\bphi_{\lambda_1}e^{\Theta}]_+
-
e^{-\Theta}\bphi_{\lambda_1}e^{\Theta}(\dbphi_{\mu_1})^\ast
\right)
\ket{\lambda_2,\dots,\lambda_s}^G\\
&=
I(\mu_1,\lambda_1)\cdot
{}^g\langle
\mu_2,\dots,\mu_r|
\lambda_2,\dots,\lambda_s
\rangle^G\\
&\hspace{12em}-
{}^g\bra{\mu_2,\dots,\mu_r}
e^{-\Theta}\bphi_{\lambda_1}e^{\Theta}(\dbphi_{\mu_1})^\ast
\ket{\lambda_2,\dots,\lambda_s}^G.
\end{align*}
Let $T={}^g\bra{\mu_2,\dots,\mu_r}
e^{-\Theta}\bphi_{\lambda_1}e^{\Theta}(\dbphi_{\mu_1})^\ast
\ket{\lambda_2,\dots,\lambda_s}^G$ be the second term.
If $\lambda_1\geq \mu_1$ (and automatically $\lambda_1>0$), it follows that ${}^g\bra{\mu_2,\dots,\mu_r}
e^{-\Theta}\bphi_{\lambda_1}e^{\Theta}=0$ from Lemma \ref{lemma:tech1}.
This implies $T=0$.
If $\lambda_1< \mu_1$ (and $\mu_1>0$), we have $(\dbphi_{\mu_1})^\ast
\ket{\lambda_2,\dots,\lambda_s}^G=0$ by Lemma \ref{lemma:tech2}, which leads $T=0$.
In each case, \eqref{eq:desired} is true.
\end{proof}

By using the lemmas we have just proved, we conclude that the vacuum expectation value \eqref{eq:inner} is expressed as
\begin{equation}\label{eq:bilinar_form_explicit}
{}^g\langle
\mu_1,\mu_2,\dots,\mu_r|
\lambda_1,\lambda_2,\dots,\lambda_s
\rangle^G
=\begin{cases}
\prod_{i=1}^rI(\mu_i,\lambda_i), & r=s,\\
0, & \mbox{otherwise}.
\end{cases}
\end{equation}
Write $\ket{\lambda_1,\dots,\lambda_r}^g:=({}^g\bra{\lambda_1,\dots,\lambda_r})^\ast$.
We now define a new symmetric polynomial $o_\lambda(x)$ as
\begin{equation}\label{eq:neutral_o}
o_\lambda(x)=
\begin{cases}
2^{-r}\cdot \bra{0}e^{\dHb(x)}\ket{\lambda_1,\lambda_2,\dots,\lambda_r}^g, & r\mathrm{:even},\\
2^{-r}\cdot \bra{0}e^{\dHb(x)}\ket{\lambda_1,\lambda_2,\dots,\lambda_r,0}^g, & r\mathrm{:odd}.
\end{cases}
\end{equation}

\begin{theorem}\label{thm:quasi_innerproduct}
Let $\lambda$, $\mu$ be strict partitions and $\ell(\lambda)$, $\ell(\mu)$ be their lengths, respectively.
Write $\ell(\lambda)'$ $($\textrm{resp.} $\ell(\mu)')$ be the smallest integer that is equal to or greater than $\ell(\lambda)$ $($\textrm{resp.} $\ell(\mu))$.
For a skew shape $\mu/\lambda$, we let $\mathrm{row}(\mu/\lambda)$ denote the number of non-empty rows of $\mu/\lambda$.
Then we have
\[
\langle
GQ_\lambda,o_\mu
\rangle
=\begin{cases}
\dfrac{(-\beta)^{\zet{\mu}-\zet{\lambda}}}{2^{\mathrm{row}(\mu/\lambda)}}, & \mu\supset \lambda \mbox{ and }\ell(\mu)'=\ell(\lambda)',\\
0, & \mbox{otherwise}.
\end{cases}
\]
\end{theorem}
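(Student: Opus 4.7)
The plan is to reduce the statement to the already-computed vacuum expectation value \eqref{eq:bilinar_form_explicit} and then evaluate the resulting combinatorial product. The neutral-fermionic presentation gives us two clean expressions to work with: Theorem \ref{thm:main1} writes $GQ_\lambda(x)=\bra{0}e^{\Hb}\ket{\lambda}^G$, where $\ket{\lambda}^G=\bphi_{\lambda_1}e^{\Theta}\cdots\bphi_{\lambda_r}e^{\Theta}\ket{0}$ (with an additional factor $\bphi_0e^{\Theta}$ when $r=\ell(\lambda)$ is odd); while the definition \eqref{eq:neutral_o} writes $o_\mu(x)=2^{-\ell(\mu)}\bra{0}e^{\dHb}\ket{\mu}^g$ with the analogous padding convention. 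Applying Proposition \ref{prop:bilin_and_expect} with these choices of $\ket{v}=\ket{\lambda}^G$ and $\ket{w}=\ket{\mu}^g=({}^g\bra{\mu})^\ast$, and recalling that the anti-involution \eqref{eq:anti2} sends $\ket{\mu}^g$ back to ${}^g\bra{\mu}\in \mathcal{E}$, I obtain
\[
\langle GQ_\lambda,o_\mu\rangle
=2^{-\ell(\mu)}\cdot {}^g\langle \mu_1,\dots,\mu_{\ell(\mu)'}\,\vert\, \lambda_1,\dots,\lambda_{\ell(\lambda)'} \rangle^G,
\]
with both tuples padded by a trailing $0$ when the corresponding length is odd.

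Next I would invoke the explicit formula \eqref{eq:bilinar_form_explicit}. If $\ell(\mu)'\neq \ell(\lambda)'$ the right-hand side is $0$, handling the ``otherwise'' branch of the theorem. Otherwise set $R=\ell(\mu)'=\ell(\lambda)'$ and write
\[
\langle GQ_\lambda,o_\mu\rangle=2^{-\ell(\mu)}\prod_{i=1}^{R}I(\mu_i,\lambda_i).
\]
Since $I(m,n)=0$ whenever $m<n$, non-vanishing forces $\mu_i\geq \lambda_i$ for every $i$, i.e.\ $\mu\supset\lambda$; this produces the containment condition in the theorem.

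Assuming $\mu\supset\lambda$, I would partition the $R$ indices into three classes: (i) the $\mathrm{row}(\mu/\lambda)$ indices with $\mu_i>\lambda_i$, contributing $(-\beta)^{\mu_i-\lambda_i}$; (ii) the indices with $\mu_i=\lambda_i>0$, contributing $2$; and (iii) the indices with $\mu_i=\lambda_i=0$ (only possible when $\ell(\mu)<R$), contributing $1$. The type-(ii) indices number $\ell(\mu)-\mathrm{row}(\mu/\lambda)$, and the exponents in class (i) sum to $\lvert\mu\rvert-\lvert\lambda\rvert$. Therefore the product becomes $2^{\ell(\mu)-\mathrm{row}(\mu/\lambda)}(-\beta)^{\lvert\mu\rvert-\lvert\lambda\rvert}$, and the prefactor $2^{-\ell(\mu)}$ cancels to give exactly $(-\beta)^{\lvert\mu\rvert-\lvert\lambda\rvert}/2^{\mathrm{row}(\mu/\lambda)}$.

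There is no real obstacle here: the structural content is already packaged into Proposition \ref{prop:bilin_and_expect} and equation \eqref{eq:bilinar_form_explicit}. The only delicate point is bookkeeping the parity/padding conventions so that the two sides of the fermionic pairing use compatibly padded tuples and so that the prefactor $2^{-\ell(\mu)}$ matches the count $2^{\ell(\mu)-\mathrm{row}(\mu/\lambda)}$ coming from class (ii). Once one observes that the padded zero slot (when $\ell(\mu)$ is odd) falls into class (iii) and hence contributes only the factor $1=I(0,0)$, all the powers of $2$ balance and the formula drops out.
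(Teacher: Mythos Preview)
Your proof is correct and follows essentially the same route as the paper: the paper's proof simply cites Proposition~\ref{prop:bilin_and_expect} and \eqref{eq:bilinar_form_explicit} together with the identity $\mathrm{row}(\mu/\lambda)=\ell(\mu)-\sharp\{i\,|\,\mu_i=\lambda_i>0\}$, and you have written out exactly that computation in detail, including the parity/padding bookkeeping that the paper leaves implicit.
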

\begin{proof}
Noting the equation $\mathrm{row}(\mu/\lambda)=\ell(\mu)-\sharp\{i\,\vert\,\mu_i=\lambda_i>0\}$ for $\mu\supset \lambda$, we know that the theorem follows directly from Proposition \ref{prop:bilin_and_expect} and \eqref{eq:bilinar_form_explicit}.
\end{proof}

From Theorem \ref{thm:quasi_innerproduct}, we can find a unique family of symmetric polynomials $gp_\lambda(x)\in g\Gamma$ $(\lambda:\mbox{strict})$ which satisfies
\begin{equation}\label{eq:dual_polynomials}
\langle GQ_\lambda,gp_\mu \rangle=\delta_{\lambda,\mu}.
\end{equation}
In fact, by using M\"{o}bius inversion theorem, we know that there exists a unique family of rational numbers $k_{\lambda,\mu}\in \QQ$ such that the sum
\begin{equation}\label{eq:gp_to_o}
gp_\lambda(x)=o_\lambda(x)+
\sum_{\substack{\mu\mathrm{:strict},\\ \lambda\supsetneq \mu,\, \ell(\lambda)'=\ell(\mu)'}}
k_{\lambda,\mu}\cdot \beta^{\zet{\lambda}-\zet{\mu}} o_\mu(x)
\end{equation}
satisfies \eqref{eq:dual_polynomials}.
Theorem \ref{thm:quasi_innerproduct} is now rewritten as
\begin{equation}\label{eq:o_to_gp}
o_\lambda(x)=gp_\lambda(x)+\sum_{\substack{\mu\mathrm{:strict},\\ \lambda\supsetneq \mu,\, \ell(\lambda)'=\ell(\mu)'}} \dfrac{(-\beta)^{\zet{\lambda}-\zet{\mu}}}{2^{\mathrm{row}(\lambda/\mu)}}gp_\mu(x).
\end{equation}

\begin{example}
For a one-row partition $(n)$, we write $o_n(x)=o_{(n)}(x)$ and $gp_n(x)=gp_{(n)}(x)$.
From \eqref{eq:gp_to_o} and \eqref{eq:o_to_gp}, we have
\[
o_n(x)=gp_n(x)
-\frac{\beta}{2}gp_{n-1}(x)
+\frac{\beta^2}{2}gp_{n-2}(x)
-\frac{\beta^3}{2}gp_{n-3}(x)
+\cdots+
\frac{(-\beta)^{n-1}}{2}gp_1(x)
\]
and
\[
gp_n(x)=o_n(x)+\frac{\beta}{2}o_{n-1}(x)
-\frac{\beta^2}{4}o_{n-2}(x)+\cdots-\frac{(-\beta)^{n-1}}{2^{n-1}}o_1(x).
\]
\end{example}

\section{On Pfaffian formulas for $o_\lambda(x)$}\label{sec:Pfaffian_formula_dual}

Let us proceed to Pfaffian formulas for dual functions.
Unfortunately, we do not have a Pfaffian-type formula for $gp_\lambda(x)$ itself at this stage.
Instead, we show two types of Pfaffian formulas for $o_\lambda(x)$, which are proved by straightforward calculation similar to that used in Section \ref{sec:GQ_lambda}.

\subsection{Pfaffian formula I for $o_\lambda$}

Write
\[
\theta:=\Theta^\ast=
2\left(\frac{\beta}{2} b_{1}+\frac{\beta^3}{2^3}\frac{b_{3}}{3}+
\frac{\beta^5}{2^5}\frac{b_{5}}{5}+\cdots\right).
\]
Then, from \eqref{eq:neutral_o}, we have
\[
o_\lambda(x)=
\begin{cases}
2^{-r}\cdot \bra{0}e^{\dHb(x)}e^{-\theta}\dbphi_{\lambda_1}e^{-\theta}\cdots \dbphi_{\lambda_r}\ket{0},
& \mbox{if $r$ is even},\\
2^{-r}\cdot \bra{0}e^{\dHb(x)}e^{-\theta}\dbphi_{\lambda_1}e^{-\theta}\cdots \dbphi_{\lambda_r}e^{-\theta}\dbphi_0\ket{0},
& \mbox{if $r$ is odd}.
\end{cases}
\]

Let us define a new symmetric polynomial $q^{[\beta]}_n(x)$ by
\[
\sum_{n=0}^\infty q^{[\beta]}_n(x)z^n=
\bra{0}e^{\dHb}\dbphi(z)\dbphi_0\ket{0}=\exp\left(
\sum_{n=1}^{\infty}\frac{p_n}{n}(z^n-\overline{z}^n)
\right)
=\prod_{i}\frac{1-x_i\overline{z}}{1-x_iz}.
\]
If we put $\beta=0$, $q^{[\beta]}_n(x)$ comes back to $q_n(x)$.
Noting $e^{-\theta}\ket{0}=\ket{0}$, $e^{-\theta}\dbphi_0\ket{0}=\dbphi_0\ket{0}$, and the relation 
\[
e^{-\theta}\dbphi(z)e^\theta=(1+\beta z)^{-1}\dbphi(z)
\] 
that is given from Proposition \ref{prop:eTheta_vs_dbphi}, we have
\begin{align*}
\sum_{n=0}^{\infty}o_{n}(x)z^n
&=
2^{-1}\bra{0}e^{\dHb}e^{-\theta}\dbphi(z)e^{-\theta}\dbphi_0\ket{0}\\
&=
2^{-1}(1+\beta z)^{-1}\bra{0}e^{\dHb}\dbphi(z)\dbphi_0\ket{0}\\
&=
\frac{1}{2(1+\beta z)}\prod_{i}\frac{1-x_i\overline{z}}{1-x_iz}.
\end{align*}
This equation can be seen as a counterpart of \eqref{eq:GQ_n}.
From this, we obtain 
\begin{equation}\label{eq:special_case}
o_{n}(x)=\frac{1}{2}\left(q^{[\beta]}_n(x)-\beta q^{[\beta]}_{n-1}(x)+\beta^2 q^{[\beta]}_{n-2}(x)
-\cdots+(-\beta)^nq^{[\beta]}_0(x)\right).
\end{equation}

Let us consider the formal function $\Psi=\Psi(z_1,\dots,z_r)$ defined as
\[
\Psi=
\begin{cases}
\bra{0}
e^{\dHb}e^{-\theta}
\dbphi(z_1)e^{-\theta}
\dbphi(z_2)e^{-\theta}
\cdots
\dbphi(z_r)
\ket{0}, & \mbox{if $r$ is even},\\
\bra{0}
e^{\dHb}e^{-\theta}
\dbphi(z_1)e^{-\theta}
\dbphi(z_2)e^{-\theta}
\cdots
\dbphi(z_r)e^{-\theta}
\dbphi_0
\ket{0}, & \mbox{if $r$ is odd},
\end{cases}
\]
in which the coefficient of $z_1^{\lambda_1}\dots z_r^{\lambda_r}$ is $o_\lambda(x)$.
By putting 
\begin{align*}
&B_i=e^{-i\theta}\dbphi(z_i)e^{i\theta}=(1+\beta z_i)^{-i}\dbphi(z_i),\\ &B_{r+1}=e^{-(r+1)\theta}\dbphi_0e^{(r+1)\theta}.
\end{align*}
and noting $e^\theta\ket{0}=\ket{0}$, we obtain
\begin{align*}
\Psi
&=
\bra{0}
e^{\dHb}
B_1B_2\cdots B_{r'}
\ket{0}
=
\Pf\left(
\bra{0}
e^{\dHb}B_iB_j
\ket{0}
\right)_{1\leq i<j\leq r'}.
\end{align*}
Therefore, we have
\begin{equation}\label{eq:Psi_Pfaffian}
\Psi=\Pf\left(\mathcal{Z}_{i,j}\right)_{1\leq i<j\leq r'},
\end{equation}
where
\[
\mathcal{Z}_{i,j}=
\begin{cases}
(1+\beta z_i)^{-i}
(1+\beta z_j)^{-j}
q^{[\beta]}(z_i)
q^{[\beta]}(z_j)
\dfrac{z_i-z_j}{z_i+z_j+\beta z_iz_j}, & j\neq r+1,\\
(1+\beta z_i)^{-i}q^{[\beta]}(z_i),
& j=r+1.
\end{cases}
\]
Here $\dfrac{z_i-z_j}{z_i+z_j+\beta z_iz_j}$ is expanded as
\[
\dfrac{1-z_jz_i^{-1}}{1+z_jz_i^{-1}+\beta z_j}
=1-(2z_i^{-1}+\beta)z_j+(2z_i^{-2}+3\beta z_i^{-1}+\beta^2)z_j^{2}-\cdots.
\] 

Let $g^{i,j}_{p,q}$ be the element of $\QQ[\beta]$ defined by
\begin{gather*}
\begin{cases}
\displaystyle
(1+\beta z)^{-i}(1+\beta w)^{-j}\frac{z-w}{z+w+\beta zw}=
\sum_{p+q\geq 0,\, q\geq 0}g^{i,j}_{p,q}z^pw^q,&
j\neq r+1,
\\
\displaystyle
(1+\beta z)^{-i}=\sum_{p\geq 0}g^{i,r+1}_{p}z^p,
&
j=r+1.
\end{cases}
\end{gather*}
Then, by comparing coefficients on the both sides of (\ref{eq:Psi_Pfaffian}), we have the explicit formula:
\begin{proposition}
We have
\[
o_\lambda(x)=2^{-r}\Pf
\left(
\zeta_{i,j}
\right)_{1\leq i<j\leq r'},
\]
where 
\[
\zeta_{i,j}=
\begin{cases}
\sum_{p+q\geq 0,\, q\geq 0}g^{i,j}_{p,q}q^{[\beta]}_{\lambda_i-p}(x)q^{[\beta]}_{\lambda_j-q}(x), & j\neq r+1,\\
\sum_{p\geq 0}g^{i,r+1}_{p}q^{[\beta]}_{\lambda_i-p}(x), & j=r+1.
\end{cases}
\]
\end{proposition}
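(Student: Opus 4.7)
The plan is to extract the coefficient of $z_1^{\lambda_1}\cdots z_r^{\lambda_r}$ from both sides of the Pfaffian identity (\ref{eq:Psi_Pfaffian}) that has already been derived. The key observation is that in the matrix $(\mathcal{Z}_{i,j})$ each variable $z_i$ (for $1\leq i\leq r$) appears only in the $i$-th row and $i$-th column. Since the Pfaffian expansion $\Pf(M)=\sum_\sigma \mathrm{sgn}(\sigma)\prod_k M_{\sigma(2k-1),\sigma(2k)}$ uses every index exactly once, each $z_i$ contributes to precisely one factor of every monomial term. Thus extracting the coefficient of $z_1^{\lambda_1}\cdots z_r^{\lambda_r}$ commutes with taking the Pfaffian, and this coefficient in $\Psi$ equals $\Pf(\widetilde{\zeta}_{i,j})_{1\leq i<j\leq r'}$, where $\widetilde{\zeta}_{i,j}$ denotes the coefficient of $z_i^{\lambda_i}z_j^{\lambda_j}$ (respectively of $z_i^{\lambda_i}$ alone, when $j=r+1$) inside $\mathcal{Z}_{i,j}$.

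Next I would compute $\widetilde{\zeta}_{i,j}$ directly from the defining generating series. Combining $q^{[\beta]}(z_i)q^{[\beta]}(z_j)=\sum_{a,b\geq 0}q^{[\beta]}_a(x)q^{[\beta]}_b(x)z_i^{a}z_j^{b}$ with the series for $g^{i,j}_{p,q}$, one has for $j\neq r+1$
\[
\widetilde{\zeta}_{i,j}=\sum_{p+q\geq 0,\,q\geq 0}g^{i,j}_{p,q}\,q^{[\beta]}_{\lambda_i-p}(x)\,q^{[\beta]}_{\lambda_j-q}(x)=\zeta_{i,j},
\]
with the convention that $q^{[\beta]}_n(x)=0$ for $n<0$, and the one-variable analogue gives $\widetilde{\zeta}_{i,r+1}=\zeta_{i,r+1}$. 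Here the only bookkeeping is that the constraints $p+q\geq 0$ and $q\geq 0$ come from the region of expansion fixed in the definition of $g^{i,j}_{p,q}$.

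Finally, I would identify the coefficient of $z_1^{\lambda_1}\cdots z_r^{\lambda_r}$ in $\Psi$ itself. Substituting $[z_i^{\lambda_i}]\dbphi(z_i)=\dbphi_{\lambda_i}$ into the formal expression for $\Psi$ produces
\[
\bra{0}e^{\dHb}e^{-\theta}\dbphi_{\lambda_1}e^{-\theta}\cdots e^{-\theta}\dbphi_{\lambda_r}\ket{0}
\]
in the even-$r$ case, and the analogous expression with an extra trailing $e^{-\theta}\dbphi_0$ in the odd-$r$ case. By the formula for $o_\lambda$ recalled at the beginning of \S\ref{sec:Pfaffian_formula_dual}, both equal $2^{r}o_\lambda(x)$. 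Dividing by $2^{r}$ yields the claimed Pfaffian. I do not anticipate any genuine obstacle: the only delicate point is the commutation of coefficient extraction with the Pfaffian, which rests entirely on the fact that the variables $z_1,\dots,z_r$ are separated across distinct rows and columns of $(\mathcal{Z}_{i,j})$.
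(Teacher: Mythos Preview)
Your proposal is correct and follows exactly the paper's approach: the proof in the paper consists of the single sentence ``by comparing coefficients on the both sides of (\ref{eq:Psi_Pfaffian}), we have the explicit formula,'' and you have simply spelled out the details of that coefficient comparison. Your observation that each $z_i$ appears only in row and column $i$ of $(\mathcal{Z}_{i,j})$, so that coefficient extraction commutes with the Pfaffian, is precisely the mechanism the paper tacitly relies on (and used already in the proof of Theorem~\ref{thm:main1}).
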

Note that \eqref{eq:special_case} is a special case of this equation.

\subsection{Pfaffian formula II for $o_\lambda$}
Similarly to \S \ref{sec:anotherP}, we can derive another Pfaffian formula for $o_\lambda(x)$.
Since
\begin{align*}
&\bra{0}
e^{\dHb}B_iB_j
\ket{0}\\
&=
(1+\beta z_i)^{-(i-1)}(1+\beta z_j)^{-(j-2)}
\bra{0}
e^{\dHb}e^{-\theta}\dbphi(z_i)e^{-\theta}\dbphi(z_j)
\ket{0}\\
&=
(1+\beta z_i)^{-(i-1)}(1+\beta z_j)^{-(j-2)}
\Psi(z_i,z_j),
\end{align*}
we have
\[
\Psi(z_1,\dots,z_n)=\Pf(\mathcal{K}_{i,j})_{1\leq i<j\leq r'},
\]
where
\[
\mathcal{K}_{i,j}=
\begin{cases}
(1+\beta z_i)^{-(i-1)}
(1+\beta z_j)^{-(j-2)}
\Psi(z_i,z_j),
& j\neq r+1,\\
(1+\beta z_i)^{-(i-1)}
\Psi(z_i),
& j=r+1.
\end{cases}
\]
By comparing coefficients on the both sides, we obtain
\[
o_\lambda(x)=\Pf(\kappa_{i,j})_{1\leq i<j\leq r'},
\]
where
\[
\kappa_{i,j}=
\begin{cases}
\sum_{k=0}^{\infty}\sum_{l=0}^{\infty}\binom{1-i}{k}\binom{2-j}{l}o_{(\lambda_i-k,\lambda_j-l)}(x), & j\neq r+1,\\
\sum_{k=0}^{\infty}\binom{1-i}{k}o_{\lambda_i-k}(x), & j=r+1.
\end{cases}
\]


\section*{Acknowledgments}

This work is partially supported by JSPS Kakenhi Grant Number 19K03605.
The author is very grateful to Prof.~Takeshi Ikeda for his valuable advice.
The author also would like to thank Prof.~Hiroshi Naruse for important suggestions on the manuscript.


\begin{thebibliography}{10}

\bibitem{baker1995symmetric}
TH~Baker, \emph{Symmetric function products and plethysms and the boson-fermion
  correspondence}, Journal of Physics A: Mathematical and General \textbf{28}
  (1995), no.~3, 589.

\bibitem{buch2002littlewood}
Anders~S Buch, \emph{A {L}ittlewood-{R}ichardson rule for the {$K$}-theory of
  {G}rassmannians}, Acta mathematica \textbf{189} (2002), no.~1, 37--78.

\bibitem{date1983method}
Etsuro Date, Michio Jimbo, and Tetsuji Miwa, \emph{Method for generating
  discrete soliton equations. {V}}, Journal of the Physical Society of Japan
  \textbf{52} (1983), no.~3, 766--771.

\bibitem{fulton2006schubert}
William Fulton and Piotr Pragacz, \emph{Schubert varieties and degeneracy
  loci}, Springer, 2006.

\bibitem{gorbounov2017quantum}
Vassily Gorbounov and Christian Korff, \emph{Quantum integrability and
  generalised quantum {S}chubert calculus}, Advances in Mathematics
  \textbf{313} (2017), 282--356.

\bibitem{graham2015excited}
William Graham and Victor Kreiman, \emph{Excited {Y}oung diagrams, equivariant
  {$K$}-theory, and {S}chubert varieties}, Transactions of the American
  Mathematical Society \textbf{367} (2015), no.~9, 6597--6645.

\bibitem{hudson2016pfaffian}
Thomas Hudson, Takeshi Ikeda, Tomoo Matsumura, and Hiroshi Naruse,
  \emph{Pfaffian formula for {$K$}-theory of odd orthogonal {G}rassmannians},
  arXiv preprint:1602.04448 (2016).

\bibitem{HUDSON2017115}
\bysame, \emph{Degeneracy loci classes in {$K$}-theory — determinantal and
  {P}faffian formula}, Advances in Mathematics \textbf{320} (2017), 115--156.

\bibitem{IKEDA201322}
Takeshi Ikeda and Hiroshi Naruse, \emph{{$K$}-theoretic analogues of factorial
  {Schur} {$P$}- and {$Q$}-functions}, Advances in Mathematics \textbf{243}
  (2013), 22--66.

\bibitem{ikeda2011bumping}
Takeshi Ikeda, Hiroshi Naruse, and Yasuhide Numata, \emph{Bumping algorithm for
  set-valued shifted tableaux}, Discrete Mathematics and Theoretical Computer
  Science, Discrete Mathematics and Theoretical Computer Science, 2011,
  pp.~527--538.

\bibitem{iwao2020freefermions}
Shinsuke Iwao, \emph{Free-fermions and skew stable {G}rothendieck polynomials},
  arXiv preprint:2004.09499 (2020).

\bibitem{iwao2020freefermion}
\bysame, \emph{Grothendieck polynomials and~the~boson-fermion correspondence},
  Algebraic Combinatorics \textbf{3} (2020), no.~5, 1023--1040.

\bibitem{jimbo1983solitons}
Michio Jimbo and Tetsuji Miwa, \emph{Solitons and infinite dimensional {L}ie
  algebras}, Publications of the Research Institute for Mathematical Sciences
  \textbf{19} (1983), no.~3, 943--1001.

\bibitem{jozefiak1991schur}
Tadeusz J{\'o}zefiak, \emph{Schur {Q}-functions and cohomology of isotropic
  {G}rassmannians}, Mathematical Proceedings of the Cambridge Philosophical
  Society, vol. 109, Cambridge University Press, 1991, pp.~471--478.

\bibitem{kirillov2017construction}
Anatol~N Kirillov and Hiroshi Naruse, \emph{Construction of double
  {Grothendieck} polynomials of classical types using {IdCoxeter} algebras},
  Tokyo Journal of Mathematics \textbf{39} (2017), no.~3, 695--728.

\bibitem{kostant1990t}
Bertram Kostant, Shrawan Kumar, et~al., \emph{{$T$}-equivariant {$K$}-theory of
  generalized flag varieties}, Journal of Differential Geometry \textbf{32}
  (1990), no.~2, 549--603.

\bibitem{macdonald1998symmetric}
Ian~G Macdonald, \emph{Symmetric functions and {Hall} polynomials}, Oxford
  university press, 1998.

\bibitem{motegi2020integrability}
Kohei Motegi, \emph{Integrability approach to
  {F}eh{\'e}r-{N}{\'e}methi-{R}im{\'a}nyi-{G}uo-{S}un type identities for
  factorial {G}rothendieck polynomials}, Nuclear Physics B (2020), 114998.

\bibitem{motegi2013vertex}
Kohei Motegi and Kazumitsu Sakai, \emph{Vertex models, {TASEP} and
  {Grothendieck} polynomials}, Journal of Physics A: Mathematical and
  Theoretical \textbf{46} (2013), no.~35, 355201.

\bibitem{motegi2014k}
\bysame, \emph{{$K$}-theoretic boson--fermion correspondence and melting
  crystals}, Journal of Physics A: Mathematical and Theoretical \textbf{47}
  (2014), no.~44, 445202.

\bibitem{nakagawa2017generating}
Masaki Nakagawa and Hiroshi Naruse, \emph{Generating functions for the
  universal {Hall}-{Littlewood} {$P$}- and {$Q$}-functions}, arXiv
  preprint:1705.04791 (2017).

\bibitem{nakagawa2018universal}
\bysame, \emph{Universal {G}ysin formulas for the universal {H}all-{L}ittlewood
  functions}, Contemporary Mathematics, vol. 708, American Mathematical
  Society, 2018, pp.~201--244.

\bibitem{nakagawa2016generalized}
Masaki Nakagawa, Hiroshi Naruse, et~al., \emph{Generalized (co) homology of the
  loop spaces of classical groups and the universal factorial schur {$ P $}-and
  {$ Q $}-functions}, Schubert Calculus—Osaka 2012, Mathematical Society of
  Japan, 2016, pp.~337--417.

\bibitem{pragacz1988enumerative}
Piotr Pragacz, \emph{Enumerative geometry of degeneracy loci}, Annales
  scientifiques de l'{\'E}cole Normale Sup{\'e}rieure, vol.~21, 1988,
  pp.~413--454.

\bibitem{Pragacz1991sqpoly}
\bysame, \emph{Algebro - {G}eometric applications of {S}chur {$S$}- and
  {$Q$}-polynomials}, Topics in Invariant Theory, Springer Berlin Heidelberg,
  1991, pp.~130--191.

\bibitem{Schur1911}
Issai Schur, \emph{{\"{U}ber} die {Darstellung} der symmetrischen und der
  alternierenden {Gruppe} durch gebrochene lineare {Substitutionen}.}, Journal
  f\"{u}r die reine und angewandte Mathematik \textbf{139} (1911), 155--250.

\bibitem{STEMBRIDGE198987}
John~R Stembridge, \emph{Shifted tableaux and the projective representations of
  symmetric groups}, Advances in Mathematics \textbf{74} (1989), no.~1,
  87--134.

\end{thebibliography}
\providecommand{\bysame}{\leavevmode\hbox to3em{\hrulefill}\thinspace}
\providecommand{\MR}{\relax\ifhmode\unskip\space\fi MR }
\providecommand{\MRhref}[2]{%
  \href{http://www.ams.org/mathscinet-getitem?mr=#1}{#2}
}
\providecommand{\href}[2]{#2}


\end{document}